\newcommand{\cmark}{\ding{51}}%
\newcommand{\xmark}{\ding{55}}%
\newcommand\bbR{\mathbb{R}}
\newcommand\bF{\bm{F}}
\newcommand\bv{\bm{v}}
\newcommand\bU{\bm{U}}
\newcommand\bR{\bm{R}}
\newcommand\bJ{\bm{J}}
\newcommand\bD{\bm{D}}
\newcommand\dd{\mathrm{d}}
\newcommand\pd[2]{\frac{\partial {#1}}{\partial {#2}}}
\newcommand\abs[1]{\lvert #1 \rvert}
\newcommand\norm[1]{\left\lVert #1 \right\rVert}
\newcommand\xr{i+\frac12}
\newcommand\xl{i-\frac12}
\newcommand\yr{j+\frac12}
\newcommand\yl{j-\frac12}
\theoremstyle{plain}
\newtheorem{lemma}{Lemma}[section]
\theoremstyle{definition}
\newtheorem{example}{Example}[section]
\newtheorem{proposition}{Proposition}[section]
\theoremstyle{remark}
\newtheorem{remark}{Remark}[section]
\crefname{equation}{}{}
\crefname{figure}{Figure}{Figures}
\crefname{table}{Table}{Tables}
\crefname{example}{Example}{Examples}
\crefname{section}{Section}{Sections}
\renewcommand{\title}{Active flux methods for hyperbolic conservation laws -- flux vector splitting and bound-preservation: Two-dimensional case}
\newcommand{\authorOne}{Junming Duan\footnote{Corresponding author. Institute of Mathematics, University of W\"urzburg, Emil-Fischer-Stra\ss e 40, 97074 W\"urzburg, Germany, junming.duan@uni-wuerzburg.de}}
\newcommand{\authorTwo}{Wasilij Barsukow\footnote{Institut de Math\'ematiques de Bordeaux (IMB), CNRS UMR 5251, University of Bordeaux, 33405 Talence, France, wasilij.barsukow@math.u-bordeaux.fr}}
\newcommand{\authorThree}{Christian Klingenberg\footnote{Institute of Mathematics, University of W\"urzburg, Emil-Fischer-Stra\ss e 40, 97074 W\"urzburg, Germany, christian.klingenberg@uni-wuerzburg.de}}
\begin{document}

\begin{center} \Large
\title

\vspace{1cm}

\date{}
\normalsize

\authorOne, \authorTwo, \authorThree
\end{center}

\begin{abstract}

{\color{blue}A more elaborate version, based on this preprint and arXiv:2405.02447 can be found as arXiv:2411.00065.}

This paper studies the active flux (AF) methods for two-dimensional hyperbolic conservation laws, focusing on the flux vector splitting (FVS) for the point value update and bound-preserving (BP) limitings, which is an extension of our previous work [J.M. Duan, W. Barsukow, C. Klingenberg, arXiv:2405.02447].
The FVS-based point value update is shown to address the mesh alignment issue that appeared in a quasi-2D Riemann problem along one axis direction on Cartesian meshes.
Consequently, the AF methods based on the FVS outperform those using Jacobian splitting, which are prone to transonic and mesh alignment issues.
A shock sensor-based limiting is proposed to enhance the convex limiting for the cell average, which can reduce oscillations well.
Some benchmark problems are tested to verify the accuracy, BP property, and shock-capturing ability of our BP AF method.
Moreover, for the double Mach reflection and forward-facing step problems, the present AF method can capture comparable or better small-scale features compared to the third-order discontinuous Galerkin method with the TVB limiter on the same mesh resolution, while using fewer degrees of freedom, demonstrating the efficiency and potential of our BP AF method for high Mach number flows.

Keywords: hyperbolic conservation laws, active flux, flux vector splitting, bound-preserving, convex limiting, shock sensor

Mathematics Subject Classification (2020): 65M08, 65M12, 65M20, 35L65

\end{abstract}

\section{Introduction}\label{sec:introduction}
This paper focuses on the active flux (AF) method for hyperbolic conservation laws, which is a new type of finite volume method \cite{Eymann_2011_Active_InCollection, Eymann_2011_Active_InProceedings,Eymann_2013_Multidimensional_InCollection,Roe_2017_Is_JoSC},
inspired by \cite{VanLeer_1977_Towards_JoCP}.
The AF method evolves the cell averages and additional degrees of freedom located at the cell interfaces, known as point values.
The original AF method is third-order accurate with a piecewise quadratic compact reconstruction, leading to a global continuous representation of the numerical solution.
Unlike usual Godunov methods, the AF method is free from Riemann solvers, since the numerical flux for the cell average update can be obtained directly, thanks to the continuity of the numerical solution across the cell interface.

Introducing the point value with independent evolution allows more flexibility in the AF methods.
The original one does not require time integration methods by construction, employing exact or approximate evolution operators and Simpson's rule for flux quadrature in time.
Exact evolution operators have been derived for linear equations in \cite{Barsukow_2019_Active_JoSC,Fan_2015_Investigations_InCollection,Eymann_2013_Multidimensional_InCollection, VanLeer_1977_Towards_JoCP}.
Approximate evolution operators have been studied for Burgers' equation \cite{Eymann_2011_Active_InCollection,Eymann_2011_Active_InProceedings,Roe_2017_Is_JoSC,Barsukow_2021_active_JoSC},
the compressible Euler equations in one spatial dimension \cite{Eymann_2011_Active_InCollection,Helzel_2019_New_JoSC,Barsukow_2021_active_JoSC},
and hyperbolic balance laws \cite{Barsukow_2021_Active_SJoSC, Barsukow_2023_Well_CoAMaC}, etc.
The AF method stands out from standard finite volume methods because of its structure-preserving property.
For example, it preserves the vorticity and stationary states for multi-dimensional acoustic equations \cite{Barsukow_2019_Active_JoSC}, and is well-balanced for acoustics with gravity \cite{Barsukow_2021_Active_SJoSC}.

For nonlinear systems, especially in multiple spatial dimensions, it might not be easy to seek exact or approximate evolution operators, leading to the development of the so-called generalized AF method, proposed in \cite{Abgrall_2023_Combination_CoAMaC, Abgrall_2023_Extensions_EMMaNA, Abgrall_2023_active}.
Using the method of lines, the evolution of the cell average and point value is first discretized in space, and then integrated in time using Runge-Kutta methods, for instance.
Two kinds of point value updates have been considered.
The first was proposed in \cite{Abgrall_2023_Combination_CoAMaC, Abgrall_2023_Extensions_EMMaNA}, named Jacobian splitting (JS), which splits the Jacobian matrix based on the sign of the eigenvalues, and then employs upwind-biased stencils to approximate the spatial derivatives.
There are some deficiencies of using the JS for the AF methods, e.g., the transonic issue \cite{Helzel_2019_New_JoSC} for nonlinear problems, leading to spikes in the cell average; and the mesh alignment issue to be shown in \Cref{ex:2d_sod}, where large errors appear in the numerical solution at the point values.
We proposed to employ the flux vector splitting (FVS) for the point value update in \cite{Duan_2024_Active_SJoSC}, which is originally used to identify the upwind directions for solving hyperbolic systems \cite{Toro_2009_Riemann}.
The FVS addresses the transonic issue by borrowing information from the neighbors naturally and uniformly, and we will also show that it is more robust than the JS for the mesh alignment issue.

This paper also develops a bound-preserving (BP) AF method in 2D case, i.e., one that guarantees that the numerical solutions at a later time will stay in a so-called \emph{admissible state set} $\mathcal{G}$, if the initial numerical solutions belong to $\mathcal{G}$.
Consider systems of hyperbolic conservation laws
\begin{equation}\label{eq:2d_hcl}
	\pd{\bU(x, t)}{t} + \pd{\bF_1(\bU)}{x} + \pd{\bF_2(\bU)}{y} = 0,\quad
	\bU(x,y,0) = \bU_0(x,y),
\end{equation}
where $\bU\in \bbR^{m}$ is the vector of $m$ conservative variables,
$\bF_1, \bF_2\in \bbR^{m}$ are the $x$- and $y$-directional physical fluxes,
and $\bU_0(x)$ is assumed to be bounded-variation initial data.
This paper deals with two cases.
The first is a scalar conservation law ($m=1$)
\begin{equation}\label{eq:2d_scalar}
	\pd{u}{t} + \pd{f_1(u)}{x} + \pd{f_2(u)}{y} = 0.
\end{equation}
The solutions to initial value problems of \eqref{eq:2d_scalar} satisfy a strict maximum principle (MP) \cite{Dafermos_2000_Hyperbolic_book},
\begin{equation}\label{eq:2d_scalar_g}
	\mathcal{G} = \left\{ u ~|~ m_0 \leqslant u \leqslant M_0 \right\},
	\quad m_0 = \min_{x, y} u_0(x, y), ~M_0 = \max_{x, y} u_0(x, y).
\end{equation}
The second case is that of compressible Euler equations with $\bU=(\rho, \rho\bv, E)^\top$
and $\bF_1=(\rho v_1, \rho v_1^2 + p, \rho v_1v_2, (E+p)v_1)^\top$, $\bF_2=(\rho v_2, \rho v_1v_2, \rho v_2^2 + p, (E+p)v_2)^\top$, i.e.,
\begin{equation}\label{eq:2d_euler}
	\begin{aligned}
		\dfrac{\partial}{\partial t}\begin{pmatrix}
			\rho\\ \rho v_1\\ \rho v_2\\ E \\
		\end{pmatrix}
		+ \dfrac{\partial}{\partial x}\begin{pmatrix}
			\rho v_1\\ \rho v_1^2 + p\\ \rho v_1v_2 \\ (E + p)v_1 \\
		\end{pmatrix}
		+ \dfrac{\partial}{\partial y}\begin{pmatrix}
			\rho v_2\\ \rho v_1v_2\\ \rho v_2^2 + p \\ (E + p)v_2 \\
		\end{pmatrix}
		 = \bm{0}.
	\end{aligned}
\end{equation}
Here $\rho$ denotes the density, $\bv = (v_1, v_2)$ the velocity,
$p$ the pressure, and $E=\frac12\rho \abs{\bv}^2 + \rho e$ the total energy with $e$ the specific internal energy.
The perfect gas equation of state (EOS) $p = (\gamma-1)\rho e$ is used to close the system \eqref{eq:2d_euler}, with the adiabatic index $\gamma > 1$.
Physically, both the density and pressure in the solutions to \eqref{eq:2d_euler} should stay positive, i.e.,
\begin{equation}\label{eq:2d_euler_g}
	\mathcal{G} = \left\{\bU = \left(\rho, \rho\bv, E\right) ~\Big|~ \rho > 0,~ p = (\gamma-1)\left(E - \frac{\norm{\rho\bv}_2^2}{2\rho}\right) > 0 \right\}.
\end{equation}
Throughout this paper, it is assumed that $\mathcal{G}$ is a \emph{convex} set,
which is obvious for the scalar case \eqref{eq:2d_scalar_g}
and can be verified for the Euler equations \eqref{eq:2d_euler_g}, see e.g. \cite{Zhang_2011_Positivity_JoCP}.

The BP property of numerical methods plays a significant role in both theoretical analysis and numerical stability.
For instance, negative density or pressure for the compressible Euler equations causes loss of hyperbolicity and nonphysical solutions, which may lead to a crash of the simulation.
In the past few decades, different kinds of BP methods have been developed, e.g., a series of works by Shu and collaborators \cite{Zhang_2011_Maximum_PotRSAMPaES, Hu_2013_Positivity_JoCP, Xu_2014_Parametrized_MoC},
a recent general framework on the analysis of BP methods \cite{Wu_2023_Geometric_SR},
and the convex limiting approach \cite{Guermond_2018_Second_SJoSC, Hajduk_2021_Monolithic_C&MwA, Kuzmin_2020_Monolithic_CMiAMaE},
which may be traced back to the flux-corrected transport (FCT) schemes for scalar conservation laws \cite{Cotter_2016_Embedded_JoCP, Guermond_2017_Invariant_SJoNA, Lohmann_2017_Flux_JoCP, Kuzmin_2012_Flux_book}.
For the AF methods, some efforts have been made on the limiting for the point value update, see e.g. \cite{Barsukow_2021_active_JoSC, Chudzik_2021_Cartesian_AMaC}, however, those limitings cannot guarantee the BP property, and they are not enough for high Mach number flows or problems with strong discontinuities.
In a very recent paper, a stabilization using the MOOD \cite{Clain_2011_high_JoCP} approach was adopted to achieve the BP property \cite{Abgrall_2023_Activea} in an a posteriori fashion,
and our recent BP AF methods \cite{Duan_2024_Active_SJoSC} based on the convex limiting and scaling limiter have been shown to work well for challenging 1D problems.

This paper extends our previous work \cite{Duan_2024_Active_SJoSC} to the 2D case, and the main contributions of this work are summarized as follows.
\begin{enumerate}[label=\roman{enumi})., wide=0pt, nosep]
	\item We show that the FVS-based point value update not only provides a remedy to the transonic issue \cite{Duan_2024_Active_SJoSC} but also addresses the mesh alignment issue, which appears in a quasi-2D Riemann problem along one axis direction on Cartesian meshes.
        For the numerical tests involving high Mach number flows and strong discontinuities, we observe that the FVS-based AF methods are robust and show good resolution for small-scale features.
	\item We extend our BP limitings for both the update of the cell average and point value to the 2D case, where the convex limiting and scaling limiter are applied, respectively.
	The high-order AF methods are blended with first-order LLF methods in a convex combination fashion, and the blending coefficients are computed by enforcing certain bounds.
	We show that under a suitable time step size and using BP limitings, the numerical solutions of the BP AF methods satisfy the MP for scalar conservation laws, and maintain positive density and pressure for the compressible Euler equations.
	\item We design a shock sensor-based limiting for the cell average, which helps to reduce oscillations by detecting shock strength, thus improving the shock-capturing ability essentially, illustrated by some benchmark tests.
	Moreover, for the double Mach reflection and forward-facing step problems, our BP AF method captures comparable or better small-scale features compared to the third-order DG method with the TVB limiter on the same mesh resolution, while using fewer degrees of freedom, demonstrating its efficiency and potential for high Mach number flows.
\end{enumerate}

The remainder of this paper is organized as follows.
\Cref{sec:2d_af_schemes} derives the AF methods based on the FVS for the point value update.
To seek BP methods, \Cref{sec:2d_limiting_average} presents our convex limiting approach for the cell average, including shock sensor-based limiting for the compressible Euler equations,
and \Cref{sec:2d_limiting_point} discusses the limiting for the point value.
Some numerical tests are conducted in  \Cref{sec:results} to experimentally demonstrate the accuracy, BP properties, and shock-capturing ability of the methods.
\Cref{sec:conclusion} concludes the paper with final remarks.

\section{2D active flux methods on Cartesian meshes}\label{sec:2d_af_schemes}
This section presents the 2D generalized active flux (AF) methods for the hyperbolic conservation laws \eqref{eq:2d_hcl},
based on the flux vector splitting (FVS) for the point value update.
The fully-discrete methods are obtained using the Runge-Kutta methods.

Assume that a 2D computational domain is divided into $N_1\times N_2$ cells,
$I_{i,j} = [x_{\xl}, x_{\xr}]\times[y_{\yl}, y_{\yr}]$ with the cell centers $(x_i, y_j) = \left(\frac{x_{\xl} + x_{\xr}}{2}, \frac{y_{\yl} + y_{\yr}}{2}\right)$ and cell sizes $\Delta x_i = x_{\xr}-x_{\xl}, \Delta y_j = y_{\yr}-y_{\yl}$, $i=1,\cdots,N_1,~ j=1,\cdots,N_2$.
In the AF methods, the degrees of freedom (DoFs) are the approximations of cell averages of the conservative variable as well as point values at the cell interfaces, where the former is used to guarantee the conservation.
The point values can be chosen as conservative variables, primitive variables, entropy variables, etc., which illustrates the flexibility of the AF methods.
This paper considers using the conservative variables for the point values, and the DoFs consist of the cell average
\begin{equation*}
    \overline{\bU}_{i,j}(t) = \dfrac{1}{\Delta x_i\Delta y_j}\int_{I_{i,j}} \bU_h(x,y,t) ~\dd x\dd y,
\end{equation*}
the face-centered values
\begin{equation*}
    \bU_{i+\frac12,j}(t) = \bU_h(x_{i+\frac12}, y_j, t),~
\bU_{i,j+\frac12}(t) = \bU_h(x_{i}, y_{j+\frac12}, t),
\end{equation*}
and the nodal value
\begin{equation*}
    \bU_{i+\frac12,j+\frac12}(t) = \bU_h(x_{i+\frac12}, y_{j+\frac12}, t),
\end{equation*}
where $\bU_h(x,y,t)$ is the numerical solution.
A sketch of the degrees of freedom (DoFs) for the third-order AF method (for the scalar case) is given in \Cref{fig:2d_af_dofs}.

\begin{figure}[hptb!]
	\centering
	\includegraphics[width=0.4\linewidth]{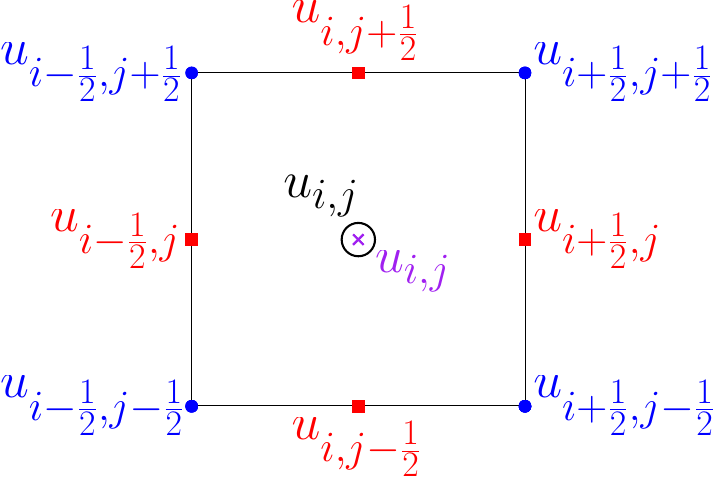}
	\caption{The DoFs for the third-order AF method: cell average (circle), face-centered value (squares), nodal value (dots). Note that the cell-centered point value $u_{i,j}$ (cross) is used in constructing the schemes, but does not belong to the DoFs.}
	\label{fig:2d_af_dofs}
\end{figure}

The update of the cell average follows the usual finite volume method by integrating \eqref{eq:2d_hcl} over $I_{i,j}$ and introducing numerical fluxes
\begin{equation}\label{eq:semi_av_2d}
	\frac{\dd \overline{\bU}_{i,j}}{\dd t} =
	-\frac{1}{\Delta x_i}\left(\widehat{\bF}_{\xr,j} - \widehat{\bF}_{\xl,j}\right)
	-\frac{1}{\Delta y_j}\left(\widehat{\bF}_{i,\yr} - \widehat{\bF}_{i,\yl}\right),
\end{equation}
where $\widehat{\bF}_{\xr,j}$ and $\widehat{\bF}_{i,\yr}$ are the numerical fluxes approximating the integral at the cell interfaces
\begin{equation}\label{eq:2d_num_fluxes}
	\widehat{\bF}_{\xr,j} = \frac{1}{\Delta y_j}\int_{y_{\yl}}^{y_{\yr}} \bF_1(\bU_h(x_{\xr},y))~\dd y,~
	\widehat{\bF}_{i,\yr} = \frac{1}{\Delta x_i}\int_{x_{\xl}}^{x_{\xr}} \bF_2(\bU_h(x,y_{\yr}))~\dd x.
\end{equation}
The accuracy of \eqref{eq:semi_av_2d} is determined by the approximation accuracy of the integral and the point values.
For third-order accuracy, the numerical fluxes can be discretized by Simpson's rule
\begin{equation*}
	\widehat{\bF}_{\xr,j} = \frac16\left(\bF_1(\bU_{\xr,\yl}) + 4\bF_1(\bU_{\xr,j}) + \bF_1(\bU_{\xr,\yr})\right).
\end{equation*}

For the evolution of the point values, this paper considers the following general form
\begin{equation}\label{eq:semi_pnt_2d}
	\frac{\dd \bU_{\sigma}}{\dd t} = - \bm{\mathcal{R}}\left(\overline{\bU}_c(t), \bU_{\sigma'}(t)\right),~c\in\mathcal{C}(\sigma), \sigma'\in\Sigma(\sigma),
\end{equation}
where $\bm{\mathcal{R}}$ is a consistent approximation of $\partial\bF_1/\partial x + \partial\bF_2/\partial y$ at the point $\sigma$, $\mathcal{C}(\sigma)$ and $\Sigma(\sigma)$ are the spatial stencils containing the cell averages and point values, respectively.
The discretization for the point value update is essential to achieve stability, and it is natural to incorporate the upwind idea for hyperbolic systems.
The point value update based on the Jacobian splitting (JS) has been studied in \cite{Abgrall_2023_Extensions_EMMaNA,Abgrall_2023_active}, however, it suffers from the transonic issue for nonlinear problems \cite{Helzel_2019_New_JoSC, Barsukow_2021_active_JoSC}, and also a mesh alignment issue to be shown in \Cref{ex:2d_sod}.
In \cite{Duan_2024_Active_SJoSC}, we proposed using the FVS for the point value update, which borrows the information from the neighbors naturally and maintains the original continuous reconstruction.
The FVS-based point value update has been shown to cure the transonic issue in \cite{Duan_2024_Active_SJoSC}, and it also mitigates the mesh alignment issue.
The discretizations using the JS can be found in \cite{Abgrall_2023_active} in detail, so this paper only presents the FVS-based point value update in 2D.

\subsection{Point value update using flux vector splitting}\label{sec:2d_fvs}
For the point value at the node $(x_{\xr}, y_{\yr})$, the FVS-based update reads
\begin{equation}\label{eq:2d_semi_fvs_node}
	\dfrac{\dd \bU_{\xr,\yr}}{\dd t} 
	= - \sum_{\ell=1}^2 \left[\bD_\ell^+\bF_\ell^{+}(\bU)+ \bD_\ell^-\bF_\ell^{-}(\bU) \right]_{\xr,\yr}
\end{equation}
where the fluxes are split as
\begin{equation}\label{eq:FVS_condition}
	 \bF_\ell= \bF_\ell^{+} + \bF_\ell^{-},\quad
	\lambda\left(\frac{\partial\bF_\ell^{+}}{\partial\bU}\right) \geqslant 0, \quad \lambda\left(\frac{\partial\bF_\ell^{-}}{\partial\bU}\right) \leqslant 0,
\end{equation}
i.e., the eigenvalues of ${\partial\bF_\ell^{+}}/{\partial\bU}$ and ${\partial\bF_\ell^{-}}/{\partial\bU}$ are non-negative and non-positive, respectively.
Different FVS can be employed if they satisfy the constraint \eqref{eq:FVS_condition}, which will be discussed later.
The finite difference operators $\bD_1^{\pm}$ and $\bD_2^{\pm}$ to approximate the flux derivatives are defined in a dimension-by-dimension fashion.
For third-order accuracy, they are similar to those in 1D \cite{Duan_2024_Active_SJoSC}. To be specific,
\begin{subequations}
	\begin{align*}
		\left(\bD_1^{+}\bF_1^+\right)_{\xr,\yr} &= \dfrac{1}{\Delta x_i}\left((\bF_1)_{\xl,\yr}^{+} - 4 (\bF_1)_{i,\yr}^{+} + 3 (\bF_1)_{\xr,\yr}^{+} \right), \\
		\left(\bD_1^{-}\bF_1^-\right)_{\xr,\yr} &= \dfrac{1}{\Delta x_{i+1}}\left(- 3 (\bF_1)_{\xr,\yr}^{-} + 4 (\bF_1)_{i+1,\yr}^{-} - (\bF_1)_{i+\frac32,\yr}^{-} \right), \\
		\left(\bD_2^{+}\bF_2^+\right)_{\xr,\yr} &= \dfrac{1}{\Delta y_j}\left((\bF_2)_{\xr,\yl}^{+} - 4 (\bF_2)_{\xr,j}^{+} + 3 (\bF_2)_{\xr,\yr}^{+} \right), \\
		\left(\bD_2^{-}\bF_2^-\right)_{\xr,\yr} &= \dfrac{1}{\Delta y_{j+1}}\left(- 3 (\bF_2)_{\xr,\yr}^{-} + 4 (\bF_2)_{\xr,j+1}^{-} - (\bF_2)_{\xr,j+\frac32}^{-} \right),
	\end{align*}
\end{subequations}
where the flux point value is obtained by directly evaluating the flux using the corresponding point value.

For the face-centered point value at $(x_{\xr}, y_{j})$, the FVS-based update reads
\begin{equation}\label{eq:2d_semi_fvs_facex}
	\dfrac{\dd \bU_{\xr,j}}{\dd t} 
	= - \left[\bD_1^+\bF_1^{+}(\bU)+ \bD_1^-\bF_1^{-}(\bU) \right]_{\xr,j}
	- \left(\bD_2\bF_2(\bU)\right)_{\xr,j},
\end{equation}
with
\begin{subequations}
	\begin{align*}
		\left(\bD_1^{+}\bF_1^+\right)_{\xr,j} &= \dfrac{1}{\Delta x_i}\left((\bF_1)_{\xl,j}^{+} - 4 (\bF_1)_{i,j}^{+} + 3 (\bF_1)_{\xr,j}^{+} \right), \\
		\left(\bD_1^{-}\bF_1^-\right)_{\xr,j} &= \dfrac{1}{\Delta x_{i+1}}\left(- 3 (\bF_1)_{\xr,j}^{-} + 4 (\bF_1)_{i+1,j}^{-} - (\bF_1)_{i+\frac32,j}^{-} \right), \\
		\left(\bD_2\bF_2\right)_{\xr,j} &= \dfrac{1}{\Delta y_j}\left((\bF_2)_{\xr,\yr} - (\bF_2)_{\xr,\yl} \right),
	\end{align*}
\end{subequations}
and the point value at the cell center is computed from the parabolic reconstruction as
\begin{align*}
	\bU_{i,j} = \frac{1}{16}\Big[
		36\overline{\bU}_{i,j} &- 4\left(\bU_{\xl,j}+\bU_{\xr,j}+\bU_{i,\yl}+\bU_{i,\yr}\right)\\
		&- \left(\bU_{\xl,\yl} + \bU_{\xr,\yl} + \bU_{\xl,\yr} + \bU_{\xr,\yr}\right)
	\Big].
\end{align*}
Note that the solution is continuous at the cell interface $x=x_{\xr}, y_{\yl}\leqslant y \leqslant y_{\yr}$ and for the compactness of the approximation, the central finite difference is used for the $y$-directional approximation.

The update for the face-centered point value at $(x_{i}, y_{\yr})$ is omitted here to save space, which is similar to \eqref{eq:2d_semi_fvs_facex}.
The remaining task is to define the FVS.

\subsubsection{Local Lax-Friedrichs flux vector splitting}
Consider the LLF FVS
\begin{equation*}
	\bF_\ell^\pm = \frac12(\bF_\ell(\bU) \pm \alpha_\ell \bU),
\end{equation*}
where the choice of $\alpha_\ell$ should satisfy the condition \eqref{eq:FVS_condition} across the spatial stencil.
In our implementation, it is determined by
\begin{align}\label{eq:2d_Rusanov_alpha}
	(\alpha_1)_{\xr,q} &= \max_{s} \left\{\left|\varrho_{1}(\bU_{s,q})\right|\right\},
	~s \in \left\{ i-\frac12, i, i+\frac12, i+1, i+\frac32\right\}, q=j,j+\frac12, \\
	(\alpha_2)_{q,\yr} &= \max_{s} \left\{\left|\varrho_{2}(\bU_{q,s})\right|\right\},
	~s \in \left\{ j-\frac12, j, j+\frac12, j+1, j+\frac32\right\}, q=i,i+\frac12, \nonumber
\end{align}
where $\varrho_\ell$ is the spectral radius of the Jacobian matrix ${\partial{\bF_\ell}}/{\partial\bU}$.

\subsubsection{Upwind flux vector splitting}
The flux can also be split based on each characteristic field as follows
\begin{equation}\label{eq:upwind_fvs}
	\bF_\ell^\pm = \frac12(\bF_\ell(\bU)\pm \abs{\bJ_\ell} \bU),\quad
	\abs{\bJ_\ell} = \bR_\ell(\bm{\Lambda}_\ell^{+} - \bm{\Lambda}_\ell^{-})\bR_\ell^{-1},
\end{equation}
where $\bJ_\ell={\partial\bF_\ell}/{\partial\bU} = \bR_\ell\bm{\Lambda}_\ell\bR_\ell^{-1}$ the eigen-decomposition of the Jacobian matrix.
Note that for linear systems, the FVS \eqref{eq:upwind_fvs} reduces to the JS \cite{Duan_2024_Active_SJoSC}.

Such an FVS is also called the Steger-Warming (SW) FVS \cite{Steger_1981_Flux_JoCP} for the Euler equations \eqref{eq:2d_euler},
and the explicit expressions of the SW FVS in the $x$-direction are
\begin{align*}
	\bF_1^{\pm} &= \begin{bmatrix}
		\frac{\rho}{2\gamma}\alpha^\pm \\
		\frac{\rho}{2\gamma}\left(\alpha^\pm v_1 + a (\lambda_2^\pm - \lambda_3^\pm)\right) \\
		\frac{\rho}{2\gamma}\alpha^\pm v_2 \\
		\frac{\rho}{2\gamma}\left(\frac12\alpha^\pm \norm{\bv}_2^2 + a v_1 (\lambda_2^\pm - \lambda_3^\pm) + \frac{a^2}{\gamma-1}(\lambda_2^\pm + \lambda_3^\pm)\right) \\
	\end{bmatrix},
\end{align*}
where
$\lambda_1 = v_\ell, ~\lambda_2 = v_\ell + a, ~\lambda_3 = v_\ell - a,
~
\alpha^\pm = 2(\gamma-1)\lambda_1^\pm + \lambda_2^\pm + \lambda_3^\pm$,
and $a = \sqrt{\gamma p / \rho}$ is the sound speed.
The expressions in the $y$-direction can be obtained using the rotational invariance.

\subsubsection{Van Leer-H\"anel flux vector splitting for the Euler equations}
Another well-known FVS for the Euler equations was developed by \cite{Leer_1982_Flux_InProceedings},
and improved in \cite{Haenel_1987_accuracy_InCollection}.
The flux is split according to the Mach number $M=v_1/a$ as
\begin{equation*}
	\bF_1 = \begin{bmatrix}
		\rho a M \\
		\rho a^2(M^2 + \frac{1}{\gamma}) \\
		\rho a M v_2 \\
		\rho a^3M(\frac{1}{2}M^2 + \frac{1}{\gamma-1}) + \frac{\rho a M v_2^2}{2} \\
	\end{bmatrix} = \bF_1^+ + \bF_1^-,~
	\bF_1^{\pm} = \begin{bmatrix}
		\pm\frac{1}{4}\rho a (M\pm1)^2 \\
		\pm\frac{1}{4}\rho a (M\pm1)^2 v_1 + p^{\pm} \\
		\pm\frac{1}{4}\rho a (M\pm1)^2 v_2 \\
		\pm\frac{1}{4}\rho a (M\pm1)^2 H \\
	\end{bmatrix}
\end{equation*}
with the enthalpy $H=(E+p)/\rho$,
and the pressure-splitting
$p^{\pm} = \frac{1}{2}(1\pm\gamma M)p$.
The VH FVS is quadratic differentiable with respect to the Mach number.

\subsection{Time discretization}
The fully-discrete scheme is obtained by using the SSP-RK3 method \cite{Gottlieb_2001_Strong_SRa}
\begin{equation}\label{eq:ssp_rk3}
	\begin{aligned}
		\bU^{*} &= \bU^{n}+\Delta t^n \bm{L}\left(\bU^{n}\right),\\
		\bU^{**} &= \frac{3}{4}\bU^{n}+\frac{1}{4}\left(\bU^{*}+\Delta t^n \bm{L}\left(\bU^{*}\right)\right),\\
		\bU^{n+1} &= \frac{1}{3}\bU^{n}+\frac{2}{3}\left(\bU^{**}+\Delta t^n \bm{L}\left(\bU^{**}\right)\right),
	\end{aligned}
\end{equation}
where $\bm{L}$ is the right-hand side of the semi-discrete schemes \eqref{eq:semi_av_2d} or  \eqref{eq:semi_pnt_2d}.
The time step size is determined by the usual CFL condition
\begin{equation*}
	\Delta t^n = \dfrac{C_{\texttt{CFL}}}{\max\limits_{i,j}\{\max\{\varrho_1(\overline{\bU}_{i,j})/\Delta x_i,~  \varrho_2(\overline{\bU}_{i,j})/\Delta y_j\}\}}.
\end{equation*}
\section{Convex limiting for cell average}\label{sec:2d_limiting_average}

This section extends our BP limitings \cite{Duan_2024_Active_SJoSC} for the cell average based on the convex limiting approach \cite{Guermond_2018_Second_SJoSC, Hajduk_2021_Monolithic_C&MwA, Kuzmin_2020_Monolithic_CMiAMaE} to the 2D case.
By blending high-order numerical fluxes in the AF methods with first-order LLF fluxes, the limited cell average update can be shown to preserve certain bounds, since it can be written as a convex combination of the numerical solutions at the previous time step and some intermediate states staying in the convex admissible state set $\mathcal{G}$.

Consider the following first-order LLF scheme
\begin{align*}
	&\overline{\bU}^{\texttt{L}}_{i,j} = \overline{\bU}^{n}_{i,j}
	- \mu_{1,i}\left(\widehat{\bF}^{\texttt{L}}_{\xr,j} - \widehat{\bF}^{\texttt{L}}_{\xl,j}\right)
	- \mu_{2,j}\left(\widehat{\bF}^{\texttt{L}}_{i,\yr} - \widehat{\bF}^{\texttt{L}}_{i,\yl}\right), \\
	&\widehat{\bF}^{\texttt{L}}_{\xr,j} = \frac12\left(\bF_1(\overline{\bU}^{n}_{i,j}) + \bF_1(\overline{\bU}^{n}_{i+1,j})\right) - \frac{(\alpha_1)_{\xr,j}}{2}\left(\overline{\bU}^{n}_{i+1,j} - \overline{\bU}^{n}_{i,j}\right), \\
	&\widehat{\bF}^{\texttt{L}}_{i,\yr} = \frac12\left(\bF_2(\overline{\bU}^{n}_{i,j}) + \bF_2(\overline{\bU}^{n}_{i,j+1})\right) - \frac{(\alpha_2)_{i,\yr}}{2}\left(\overline{\bU}^{n}_{i,j+1} - \overline{\bU}^{n}_{i,j}\right),\\
	&(\alpha_1)_{\xr,j} = \max\{\varrho_1(\overline{\bU}^{n}_{i,j}), ~\varrho_1(\overline{\bU}^{n}_{i+1,j})\},~(\alpha_2)_{i,\yr} = \max\{\alpha_2(\overline{\bU}^{n}_{i,j}), ~\alpha_2(\overline{\bU}^{n}_{i,j+1})\}, \\
	&\mu_{1,i} = \Delta t^n / \Delta x_i, ~ \mu_{2,j} = \Delta t^n / \Delta y_j.
\end{align*}
Note that here $(\alpha_1)_{\xr,j}$ is not the same as the one in the LLF FVS \eqref{eq:2d_Rusanov_alpha}.
Following \cite{Guermond_2016_Invariant_SJoNA}, rewrite the first-order LLF scheme as
\begin{align}\label{eq:2d_lo_decomp}
	\overline{\bU}^{\texttt{L}}_{i,j} =& \left[1-\mu_{1,i}\left((\alpha_1)_{\xl,j}+(\alpha_1)_{\xr,j}\right)
	-\mu_{2,j}\left((\alpha_2)_{i,\yl}+(\alpha_2)_{i,\yr}\right)\right]
	\overline{\bU}^{n}_{i,j} \nonumber\\
	&+ \mu_{1,i}(\alpha_1)_{\xl,j}\widetilde{\bU}_{\xl,j} + \mu_{1,i}(\alpha_1)_{\xr,j}\widetilde{\bU}_{\xr,j} \nonumber\\
	&+ \mu_{2,j}(\alpha_2)_{i,\yl}\widetilde{\bU}_{i,\yl} + \mu_{2,j}(\alpha_2)_{i,\yr}\widetilde{\bU}_{i,\yr},
\end{align}
with the intermediate states
\begin{equation}\label{eq:2d_llf_inter_states}
	\begin{aligned}
	&\widetilde{\bU}_{\xl,j} = \frac12\left(\overline{\bU}^{n}_{i-1,j} + \overline{\bU}^{n}_{i,j}\right)
	+ \frac{1}{2(\alpha_1)_{\xl,j}}\left[\bF_1(\overline{\bU}^{n}_{i-1,j}) - \bF_1(\overline{\bU}^{n}_{i,j})\right], \\
	&\widetilde{\bU}_{\xr,j} = \frac12\left(\overline{\bU}^{n}_{i,j} + \overline{\bU}^{n}_{i+1,j}\right)
	+ \frac{1}{2(\alpha_1)_{\xr,j}}\left[\bF_1(\overline{\bU}^{n}_{i,j}) - \bF_1(\overline{\bU}^{n}_{i+1,j})\right], \\
	&\widetilde{\bU}_{i,\yl} = \frac12\left(\overline{\bU}^{n}_{i,j-1} + \overline{\bU}^{n}_{i,j}\right)
	+ \frac{1}{2(\alpha_2)_{i,\yl}}\left[\bF_2(\overline{\bU}^{n}_{i,j-1}) - \bF_2(\overline{\bU}^{n}_{i,j})\right], \\
	&\widetilde{\bU}_{i,\yr} = \frac12\left(\overline{\bU}^{n}_{i,j} + \overline{\bU}^{n}_{i,j+1}\right)
	+ \frac{1}{2(\alpha_2)_{i,\yr}}\left[\bF_2(\overline{\bU}^{n}_{i,j}) - \bF_2(\overline{\bU}^{n}_{i,j+1})\right].
	\end{aligned}
\end{equation}

\begin{lemma}\label{lem:lf_scalar}
    For the scalar conservation laws \eqref{eq:2d_scalar}, the intermediate state $\widetilde{u}=\frac12(u_L + u_R) + \frac{1}{2\alpha}(f_\ell(u_L) - f_\ell(u_R))$ stays in $\mathcal{G}$ \eqref{eq:2d_scalar_g} if $\alpha\geqslant\max\{\varrho_\ell(u_L), \varrho_\ell(u_R)\}$.
\end{lemma}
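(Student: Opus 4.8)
The plan is to write $\widetilde u$ as a convex combination of $u_L$ and $u_R$ and then invoke convexity of $\mathcal G$. Since the admissible set \eqref{eq:2d_scalar_g} is the interval $[m_0,M_0]$, it is convex, and $u_L,u_R\in\mathcal G$; hence any convex combination of the two already lies in $\mathcal G$, which is exactly the conclusion. So the whole argument reduces to producing nonnegative weights summing to one.

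To exhibit the combination, I would first dispose of the degenerate case $u_L=u_R$ separately (then $\widetilde u=u_L\in\mathcal G$ trivially). Otherwise, since $f_\ell$ is a scalar differentiable flux, the mean value theorem gives a value $\xi$ between $u_L$ and $u_R$ with $f_\ell(u_L)-f_\ell(u_R)=f_\ell'(\xi)(u_L-u_R)$. Substituting into $\widetilde u=\frac12(u_L+u_R)+\frac{1}{2\alpha}\bigl(f_\ell(u_L)-f_\ell(u_R)\bigr)$ and regrouping yields
\[
  \widetilde u=\frac12\Bigl(1+\tfrac{f_\ell'(\xi)}{\alpha}\Bigr)u_L+\frac12\Bigl(1-\tfrac{f_\ell'(\xi)}{\alpha}\Bigr)u_R ,
\]
whose two coefficients sum to $1$ by construction.

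The main point to verify — and the only genuine obstacle — is that both coefficients are nonnegative, i.e. $|f_\ell'(\xi)|\le\alpha$. In the scalar case $\varrho_\ell(u)=|f_\ell'(u)|$, and since $\xi$ lies between $u_L$ and $u_R$ this follows as soon as $\alpha$ bounds $|f_\ell'|$ on the segment joining the two states; the hypothesis $\alpha\ge\max\{\varrho_\ell(u_L),\varrho_\ell(u_R)\}$ delivers this for fluxes whose wave speed is monotone between $u_L$ and $u_R$ (such as Burgers), and it certainly holds for the stencil-wide choice of $\alpha$ in \eqref{eq:2d_Rusanov_alpha} used in the actual scheme. Granting the bound, $\widetilde u$ lies between $\min\{u_L,u_R\}$ and $\max\{u_L,u_R\}$, hence in $[m_0,M_0]=\mathcal G$. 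When writing this out I would also double-check that the sign in front of $f_\ell'(\xi)/\alpha$ is attached to the correct state, so that the representation is consistent with the intermediate states in \eqref{eq:2d_llf_inter_states}; apart from that bookkeeping, the argument is routine and this scalar estimate is the prototype of the viscosity/CFL conditions that will recur for the Euler system.
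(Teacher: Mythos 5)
Your argument is correct in substance but takes a genuinely different route from the paper. The paper does not pass through the mean value theorem: it regards $\widetilde{u}(u_L,u_R)$ as a function of its two arguments, computes $\partial\widetilde{u}/\partial u_L=\frac12(1+f_\ell'(u_L)/\alpha)\geqslant 0$ and $\partial\widetilde{u}/\partial u_R=\frac12(1-f_\ell'(u_R)/\alpha)\geqslant 0$, and concludes from monotonicity together with $\widetilde{u}(m_0,m_0)=m_0$, $\widetilde{u}(M_0,M_0)=M_0$ that $m_0\leqslant\widetilde{u}\leqslant M_0$. Your MVT decomposition $\widetilde{u}=\frac12(1+f_\ell'(\xi)/\alpha)u_L+\frac12(1-f_\ell'(\xi)/\alpha)u_R$ buys something the paper's version does not state: the sharper local bound $\min\{u_L,u_R\}\leqslant\widetilde{u}\leqslant\max\{u_L,u_R\}$, which is exactly what is later needed for the \emph{local} maximum principle used in the convex limiting, whereas the paper's argument only delivers the global bounds $[m_0,M_0]$. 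The price is that you need $\alpha\geqslant\abs{f_\ell'(\xi)}$ at an unknown intermediate point, while the paper nominally only needs it at $u_L,u_R$ — but note that the paper's own proof has the mirror-image of the same subtlety: integrating the sign of the partial derivatives from $(m_0,m_0)$ up to $(u_L,u_R)$ tacitly requires $\alpha\geqslant\abs{f_\ell'(u)}$ on all of $[m_0,M_0]$, not just at the two endpoints. So neither proof is fully rigorous for a non-convex flux under the literal hypothesis $\alpha\geqslant\max\{\varrho_\ell(u_L),\varrho_\ell(u_R)\}$; you are right, and more candid than the paper, that the gap closes for fluxes with monotone wave speed (Burgers, linear advection — the only scalar examples used) or for the stencil-wide $\alpha$ of \eqref{eq:2d_Rusanov_alpha} actually implemented. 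Your bookkeeping of the sign ($+f_\ell'(\xi)/\alpha$ attached to $u_L$) is consistent with \eqref{eq:2d_llf_inter_states}.
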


\begin{proof}
    The partial derivatives of the intermediate state satisfy
    \begin{equation*}
        \frac{\partial\widetilde{u}(u_L, u_R)}{\partial u_L} = \frac12\left(1 + \frac{f_\ell'(u_L)}{\alpha}\right)\geqslant 0, \quad
        \frac{\partial\widetilde{u}(u_L, u_R)}{\partial u_R} = \frac12\left(1 - \frac{f_\ell'(u_R)}{\alpha}\right)\geqslant 0.
    \end{equation*}
    As $\widetilde{u}(m_0, m_0) = m_0$, $\widetilde{u}(M_0, M_0) = M_0$, it holds $m_0 \leqslant \widetilde{u} \leqslant M_0$.
\end{proof}

\begin{lemma}\label{lem:lf_euler}
    For the Euler equations \eqref{eq:2d_euler}, the intermediate state $\widetilde{\bU}=\frac12(\bU_L + \bU_R) + \frac{1}{2\alpha}(\bF_\ell(\bU_L) - \bF_\ell(\bU_R))$ stays in $\mathcal{G}$ \eqref{eq:2d_euler_g} if $\alpha\geqslant\max\{\varrho_\ell(\bU_L), \varrho_\ell(\bU_R)\}$.
\end{lemma}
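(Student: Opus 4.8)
The plan is to combine the assumed convexity of $\mathcal{G}$ with the elementary decomposition
\begin{equation*}
\widetilde{\bU} = \frac12\left(\bU_L + \frac{1}{\alpha}\bF_\ell(\bU_L)\right) + \frac12\left(\bU_R - \frac{1}{\alpha}\bF_\ell(\bU_R)\right),
\end{equation*}
which exhibits $\widetilde{\bU}$ as the midpoint of the two states $\bm{W}^{+}(\bU_L)$ and $\bm{W}^{-}(\bU_R)$, where $\bm{W}^{\pm}(\bU) := \bU \pm \tfrac{1}{\alpha}\bF_\ell(\bU)$. Since $\mathcal{G}$ is convex, it then suffices to prove that $\bm{W}^{\pm}(\bU) \in \mathcal{G}$ whenever $\alpha \geqslant \varrho_\ell(\bU) = \abs{v_\ell} + a$, and to invoke this with $\bU = \bU_L$ and the $+$ sign, and with $\bU = \bU_R$ and the $-$ sign; the hypothesis $\alpha \geqslant \max\{\varrho_\ell(\bU_L),\varrho_\ell(\bU_R)\}$ makes both applications legitimate.

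To check admissibility of $\bm{W}^{\pm}(\bU)$ I would treat $\ell = 1$ first and set $\beta^{\pm} = 1 \pm v_1/\alpha$. The density of $\bm{W}^{\pm}(\bU)$ equals $\rho\beta^{\pm}$, which is positive because $\alpha \geqslant \abs{v_1} + a > \abs{v_1}$. For the pressure, a direct substitution into $p = (\gamma-1)\bigl(E - \norm{\rho\bv}_2^2/(2\rho)\bigr)$ — in which the $v_2$-dependent terms cancel and the cross term produced by the squared $x$-momentum cancels against the $v_1 p/\alpha$ contribution of the energy flux — yields the closed form
\begin{equation*}
p\bigl(\bm{W}^{\pm}(\bU)\bigr) = \frac{p}{\alpha^2\beta^{\pm}}\left[(\alpha \pm v_1)^2 - \frac{(\gamma-1)p}{2\rho}\right].
\end{equation*}
Since $a^2 = \gamma p/\rho$ and $\alpha \pm v_1 \geqslant (\abs{v_1} + a) \pm v_1 \geqslant a > 0$, the bracket is at least $a^2 - \tfrac{\gamma-1}{2\gamma}a^2 = \tfrac{\gamma+1}{2\gamma}a^2 > 0$, so $p(\bm{W}^{\pm}(\bU)) > 0$, i.e. $\bm{W}^{\pm}(\bU) \in \mathcal{G}$. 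The direction $\ell = 2$ is handled identically after interchanging the roles of $v_1$ and $v_2$, using the rotational invariance of \eqref{eq:2d_euler}.

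Combining the two admissible states $\bm{W}^{+}(\bU_L), \bm{W}^{-}(\bU_R) \in \mathcal{G}$ through the convexity of $\mathcal{G}$ then gives $\widetilde{\bU} \in \mathcal{G}$, which finishes the proof. The only genuinely computational step is the derivation of the displayed formula for $p(\bm{W}^{\pm}(\bU))$; once that identity is established, positivity of both density and pressure follows from the single elementary bound $\alpha \pm v_\ell \geqslant a$. I expect this algebra, though routine, to be the main obstacle. An alternative would be to recognize $\widetilde{\bU}$ as the HLL intermediate state for signal speeds $\pm\alpha$, hence as a spatial average of the exact Riemann fan of \eqref{eq:2d_euler}, which lies in $\mathcal{G}$ by convexity; however, certifying that $\alpha$ bounds \emph{every} wave speed occurring in the fan under only the stated hypothesis on $\bU_L,\bU_R$ is delicate for strong shocks, so the self-contained decomposition above is preferable and parallels the one-dimensional argument in \cite{Duan_2024_Active_SJoSC}.
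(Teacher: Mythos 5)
Your proposal is correct and follows essentially the same route as the paper: the same midpoint decomposition into $\bU_L+\tfrac{1}{\alpha}\bF_\ell(\bU_L)$ and $\bU_R-\tfrac{1}{\alpha}\bF_\ell(\bU_R)$, the same density check, and a pressure formula that is algebraically identical to the paper's internal-energy identity after substituting $p=(\gamma-1)\rho e$ and $a^2=\gamma p/\rho$. The only cosmetic difference is that you state the admissibility of $\bU\pm\tfrac{1}{\alpha}\bF_\ell(\bU)$ directly in terms of pressure rather than recovered internal energy (and your signs in the decomposition are in fact the consistent ones).
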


\begin{proof}
    For the Euler equations \eqref{eq:2d_euler}, as the intermediate state is a convex combination of ${\bU}_L - \frac{1}{\alpha}\bF_\ell({\bU}_L)$ and ${\bU}_{R} + \frac{1}{\alpha}\bF_\ell({\bU}_R)$, we only need to show that the ${\bU} \pm \frac{1}{\alpha}\bF_\ell({\bU})$ belongs to $\mathcal{G}$.
    The density component $\left(\rho \pm (\rho v_\ell)/{\alpha} \right)$ is positive since $\alpha>\abs{v_\ell}$.
    The recovered internal energy is
    \begin{align*}
        \rho e\left({\bU} \pm \frac{1}{\alpha}\bF_\ell({\bU})\right)
        &= E\left({\bU} \pm \frac{1}{\alpha}\bF_\ell({\bU})\right) - \frac{\norm{\rho \bv\left({\bU} \pm \frac{1}{\alpha}\bF_\ell({\bU})\right)}_2^2}{2\rho\left({\bU} \pm \frac{1}{\alpha}\bF_\ell({\bU})\right)} \\
        &= \left(1 - \frac{p^2}{2(\alpha\pm v_\ell)^2\rho^2e}\right)\left(1\pm\frac{v_\ell}{\alpha}\right)\rho e,
    \end{align*}
    so that one has $\rho e\left({\bU} \pm \frac{1}{\alpha}\bF_\ell({\bU})\right)> 0 \iff \frac{p^2}{2\rho^2e} < (\alpha\pm v_\ell)^2 \iff \frac{\gamma-1}{2\gamma}a^2 < (\alpha\pm v_\ell)^2$ for the perfect gas EOS,
    which holds as $\alpha\geqslant \abs{v_\ell}+a$. 
\end{proof}

\begin{remark}
    Here $\alpha$ is not chosen to be larger than the maximal wave speed, as it is sufficient for the MP for the scalar case and the positivity-preserving property for the Euler equations.
\end{remark}


\begin{lemma}\label{lem:llf_g}
	If the time step size $\Delta t^n$ satisfies
	\begin{equation}\label{eq:2d_convex_combination_dt}
		\Delta t^n \leqslant \frac12\min\left\{\dfrac{\Delta x_i}{(\alpha_1)_{\xl,j}+(\alpha_1)_{\xr,j}}, \dfrac{\Delta y_j}{(\alpha_2)_{i,\yl}+(\alpha_2)_{i,\yr}}\right\},
	\end{equation}
	then \eqref{eq:2d_lo_decomp} is a convex combination,
	and	the first-order LLF scheme \eqref{eq:2d_lo_decomp} is BP.
\end{lemma}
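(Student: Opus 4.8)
The plan is to verify directly that \eqref{eq:2d_lo_decomp} writes $\overline{\bU}^{\texttt{L}}_{i,j}$ as a convex combination of $\overline{\bU}^{n}_{i,j}$ and the four intermediate states $\widetilde{\bU}_{\xl,j}$, $\widetilde{\bU}_{\xr,j}$, $\widetilde{\bU}_{i,\yl}$, $\widetilde{\bU}_{i,\yr}$, and then to combine this with the convexity of $\mathcal{G}$ and \Cref{lem:lf_scalar,lem:lf_euler}.

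First I would check nonnegativity of the five weights. The four weights $\mu_{1,i}(\alpha_1)_{\xl,j}$, $\mu_{1,i}(\alpha_1)_{\xr,j}$, $\mu_{2,j}(\alpha_2)_{i,\yl}$, $\mu_{2,j}(\alpha_2)_{i,\yr}$ multiplying the intermediate states are obviously nonnegative, since the numerical viscosities $(\alpha_\ell)_\cdot$ and the ratios $\mu_{1,i}=\Delta t^n/\Delta x_i$, $\mu_{2,j}=\Delta t^n/\Delta y_j$ are. The only weight requiring work is the coefficient of $\overline{\bU}^{n}_{i,j}$, namely $1-\mu_{1,i}\big((\alpha_1)_{\xl,j}+(\alpha_1)_{\xr,j}\big)-\mu_{2,j}\big((\alpha_2)_{i,\yl}+(\alpha_2)_{i,\yr}\big)$. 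Substituting the definitions of $\mu_{1,i},\mu_{2,j}$, the CFL-type bound \eqref{eq:2d_convex_combination_dt} guarantees that each of the two subtracted terms is at most $\tfrac12$, hence this coefficient is at least $1-\tfrac12-\tfrac12=0$.

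Second I would observe that the five weights sum to one: the quantities $\mu_{1,i}(\alpha_1)_{\xl,j}$, $\mu_{1,i}(\alpha_1)_{\xr,j}$, $\mu_{2,j}(\alpha_2)_{i,\yl}$, $\mu_{2,j}(\alpha_2)_{i,\yr}$ appearing as intermediate-state weights cancel exactly against the corresponding negative terms in the coefficient of $\overline{\bU}^{n}_{i,j}$, leaving $1$. Thus \eqref{eq:2d_lo_decomp} is genuinely a convex combination. Finally, $\overline{\bU}^{n}_{i,j}\in\mathcal{G}$ by hypothesis, and each of the four intermediate states belongs to $\mathcal{G}$ by \Cref{lem:lf_scalar} in the scalar case and \Cref{lem:lf_euler} for the Euler equations---the hypotheses of those lemmas holding because, by construction, each $(\alpha_\ell)_\cdot$ is the maximum of $\varrho_\ell$ evaluated at the two cell averages entering the corresponding intermediate state. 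Convexity of $\mathcal{G}$ then yields $\overline{\bU}^{\texttt{L}}_{i,j}\in\mathcal{G}$, i.e. the first-order LLF scheme is BP.

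I do not expect a genuine obstacle here; the argument is the standard ``convex-decomposition plus invariance of the admissible set'' reasoning in the spirit of Guermond--Popov, already carried out in 1D in \cite{Duan_2024_Active_SJoSC}. The only point meriting care is the bookkeeping: the factor $\tfrac12$ in \eqref{eq:2d_convex_combination_dt} is exactly what is needed so that the $x$- and $y$-directional contributions---which arise from the dimension-by-dimension structure of the 2D LLF flux---each consume at most half of the mass in the self-coefficient of $\overline{\bU}^{n}_{i,j}$, so that their sum still leaves that coefficient nonnegative.
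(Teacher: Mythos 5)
Your proposal is correct and follows essentially the same route as the paper: check that under \eqref{eq:2d_convex_combination_dt} the decomposition \eqref{eq:2d_lo_decomp} is a convex combination, invoke \Cref{lem:lf_scalar} and \Cref{lem:lf_euler} to place the intermediate states in $\mathcal{G}$, and conclude by convexity of $\mathcal{G}$. The paper states this more tersely, but the content is identical.
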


\begin{proof}
    The intermediate states \eqref{eq:2d_llf_inter_states} belongs to $\mathcal{G}$ due to \Cref{lem:lf_scalar} and \ref{lem:lf_euler}.
    Under the constraint \eqref{eq:2d_convex_combination_dt}, the LLF scheme \eqref{eq:2d_lo_decomp} is a convex combination, and it is BP thanks to the convexity of $\mathcal{G}$.
\end{proof}

Denote the anti-diffusion flux by $\Delta \widehat{\bF}_{\xr,j} = \widehat{\bF}^{\texttt{H}}_{\xr,j} - \widehat{\bF}^{\texttt{L}}_{\xr,j}$,
$\Delta \widehat{\bF}_{i,\yr} = \widehat{\bF}^{\texttt{H}}_{i,\yr} - \widehat{\bF}^{\texttt{L}}_{i,\yr}$, with $\widehat{\bF}^{\texttt{H}}_{i\pm\frac12,j}, \widehat{\bF}^{\texttt{H}}_{i, j\pm\frac12}$ given in \eqref{eq:2d_num_fluxes}.
Applying a forward-Euler step to the semi-discrete high-order AF scheme \eqref{eq:semi_av_2d} yields
\begin{align}
	\overline{\bU}^{\texttt{H}}_{i,j} =& \left[1-\mu_{1,i}\left((\alpha_1)_{\xl,j}+(\alpha_1)_{\xr,j}\right)
	-\mu_{2,j}\left((\alpha_2)_{i,\yl}+(\alpha_2)_{i,\yr}\right)\right]
	\overline{\bU}^{n}_{i,j} \nonumber\\
	&+ \mu_{1,i}(\alpha_1)_{\xl,j}\widetilde{\bU}_{\xl,j}^{\texttt{H},+}
	+ \mu_{1,i}(\alpha_1)_{\xr,j}\widetilde{\bU}_{\xr,j}^{\texttt{H},-} \nonumber\\
	&+ \mu_{2,j}(\alpha_2)_{i,\yl}\widetilde{\bU}_{i,\yl}^{\texttt{H},+}
	+ \mu_{2,j}(\alpha_2)_{i,\yr}\widetilde{\bU}_{i,\yr}^{\texttt{H},-} \label{eq:2d_ho_decomp},
\end{align}
with the high-order intermediate states
\begin{align*}
	&\widetilde{\bU}_{\xl,j}^{\texttt{H},+} := \widetilde{\bU}_{\xl,j} + \frac{\Delta\widehat{\bF}_{\xl,j}}{(\alpha_1)_{\xl,j}}, \quad
	\widetilde{\bU}_{\xr,j}^{\texttt{H},-} := \widetilde{\bU}_{\xr,j} - \frac{\Delta\widehat{\bF}_{\xr,j}}{(\alpha_1)_{\xr,j}}, \\
	&\widetilde{\bU}_{i,\yl}^{\texttt{H},+} := \widetilde{\bU}_{i,\yl} + \frac{\Delta\widehat{\bF}_{i,\yl}}{(\alpha_2)_{i,\yl}}, \quad
	\widetilde{\bU}_{i,\yr}^{\texttt{H},-} := \widetilde{\bU}_{i,\yr} - \frac{\Delta\widehat{\bF}_{i,\yr}}{(\alpha_2)_{i,\yr}}.
\end{align*}

Observe that the first-order scheme \eqref{eq:2d_lo_decomp} and high-order scheme \eqref{eq:2d_ho_decomp} share the same abstract form so that one can define a limited scheme as
\begin{align}
	\overline{\bU}^{\texttt{Lim}}_{i,j} =& \left[1-\mu_{1,i}\left((\alpha_1)_{\xl,j}+(\alpha_1)_{\xr,j}\right)
	-\mu_{2,j}\left((\alpha_2)_{i,\yl}+(\alpha_2)_{i,\yr}\right)\right]
	\overline{\bU}^{n}_{i,j} \nonumber\\
	&+ \mu_{1,i}(\alpha_1)_{\xl,j}\widetilde{\bU}_{\xl,j}^{\texttt{Lim},+}
	+ \mu_{1,i}(\alpha_1)_{\xr,j}\widetilde{\bU}_{\xr,j}^{\texttt{Lim},-} \nonumber\\
	&+ \mu_{2,j}(\alpha_2)_{i,\yl}\widetilde{\bU}_{i,\yl}^{\texttt{Lim},+}
	+ \mu_{2,j}(\alpha_2)_{i,\yr}\widetilde{\bU}_{i,\yr}^{\texttt{Lim},-} \label{eq:2d_limited_decomp},
\end{align}
with the following limited intermediate states
\begin{align*}
	&\widetilde{\bU}_{\xl,j}^{\texttt{Lim},+} = \widetilde{\bU}_{\xl,j} + \frac{\Delta\widehat{\bF}^{\texttt{Lim}}_{\xl,j}}{(\alpha_1)_{\xl,j}}
	:= \widetilde{\bU}_{\xl,j} + \frac{\theta_{\xl,j}\Delta\widehat{\bF}_{\xl,j}}{(\alpha_1)_{\xl,j}}, \\
	&\widetilde{\bU}_{\xr,j}^{\texttt{Lim},-} = \widetilde{\bU}_{\xr,j} - \frac{\Delta\widehat{\bF}^{\texttt{Lim}}_{\xr,j}}{(\alpha_1)_{\xr,j}}
	:= \widetilde{\bU}_{\xr,j} - \frac{\theta_{\xr,j}\Delta\widehat{\bF}_{\xr,j}}{(\alpha_1)_{\xr,j}}, \\
	&\widetilde{\bU}_{i,\yl}^{\texttt{Lim},+} = \widetilde{\bU}_{i,\yl} + \frac{\Delta\widehat{\bF}^{\texttt{Lim}}_{i,\yl}}{(\alpha_2)_{i,\yl}}
	:= \widetilde{\bU}_{i,\yl} + \frac{\theta_{i,\yl}\Delta\widehat{\bF}_{i,\yl}}{(\alpha_2)_{i,\yl}}, \\
	&\widetilde{\bU}_{i,\yr}^{\texttt{Lim},-} = \widetilde{\bU}_{i,\yr} - \frac{\Delta\widehat{\bF}^{\texttt{Lim}}_{i,\yr}}{(\alpha_2)_{i,\yr}}
	:= \widetilde{\bU}_{i,\yr} - \frac{\theta_{i,\yr}\Delta\widehat{\bF}_{i,\yr}}{(\alpha_2)_{i,\yr}},
\end{align*}
where the coefficient $\theta_{i\pm \frac12,j}$ and $\theta_{i,j\pm \frac12}$ stay in $[0,1]$.

\begin{proposition}
	If the cell average at the last time step $\overline{\bU}_{i,j}^n$ and the limited intermediate states $\widetilde{\bU}_{i\pm\frac12,j}^{\texttt{Lim},\mp}$, $\widetilde{\bU}_{i,j\pm\frac12}^{\texttt{Lim},\mp}$ belong to the admissible state set $\mathcal{G}$,
	then the limited average update \eqref{eq:2d_limited_decomp} is BP, i.e., $\overline{\bU}^{\texttt{Lim}}_{i,j} \in \mathcal{G}$, under the CFL condition \eqref{eq:2d_convex_combination_dt}.
	If the SSP-RK3 \eqref{eq:ssp_rk3} is used for the time integration,
	the high-order scheme is also BP.	
\end{proposition}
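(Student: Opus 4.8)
The plan is to notice that the limited update \eqref{eq:2d_limited_decomp} has exactly the same abstract structure as the first-order LLF scheme \eqref{eq:2d_lo_decomp}, namely a weighted combination of $\overline{\bU}_{i,j}^n$ and four intermediate states whose weights are independent of the limiting coefficients $\theta$; consequently the convex-combination argument of \Cref{lem:llf_g} transfers verbatim once ``intermediate state'' is read as ``limited intermediate state''.

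Concretely, I would first check that the five weights appearing in \eqref{eq:2d_limited_decomp} --- the weight $1-\mu_{1,i}((\alpha_1)_{\xl,j}+(\alpha_1)_{\xr,j})-\mu_{2,j}((\alpha_2)_{i,\yl}+(\alpha_2)_{i,\yr})$ multiplying $\overline{\bU}_{i,j}^n$, together with $\mu_{1,i}(\alpha_1)_{\xl,j}$, $\mu_{1,i}(\alpha_1)_{\xr,j}$, $\mu_{2,j}(\alpha_2)_{i,\yl}$, $\mu_{2,j}(\alpha_2)_{i,\yr}$ --- are non-negative and sum to one. The last four are manifestly non-negative; the first is non-negative precisely because the CFL condition \eqref{eq:2d_convex_combination_dt} forces $\mu_{1,i}((\alpha_1)_{\xl,j}+(\alpha_1)_{\xr,j})\leqslant\frac12$ and $\mu_{2,j}((\alpha_2)_{i,\yl}+(\alpha_2)_{i,\yr})\leqslant\frac12$, so their sum does not exceed one; and the $\alpha$-terms telescope, so the five weights add up to one. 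Since $\theta_{i\pm\frac12,j},\theta_{i,j\pm\frac12}\in[0,1]$ enter only the (limited) intermediate states and never the weights, this is the identical bookkeeping as for \eqref{eq:2d_lo_decomp}. Under the hypothesis that $\overline{\bU}_{i,j}^n$ and the four limited intermediate states $\widetilde{\bU}_{i\pm\frac12,j}^{\texttt{Lim},\mp}$, $\widetilde{\bU}_{i,j\pm\frac12}^{\texttt{Lim},\mp}$ all lie in $\mathcal{G}$, the right-hand side of \eqref{eq:2d_limited_decomp} is then a convex combination of points of $\mathcal{G}$, and convexity of $\mathcal{G}$ gives $\overline{\bU}_{i,j}^{\texttt{Lim}}\in\mathcal{G}$. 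This settles the forward-Euler assertion.

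For the SSP-RK3 part, I would use that each of the three stages of \eqref{eq:ssp_rk3} is a convex combination of $\overline{\bU}^n$ and a single forward-Euler update of the previous stage value. Applying the forward-Euler assertion stagewise --- at each stage the limited intermediate states entering the forward-Euler substep are admissible (this is exactly what the limiting enforces, with the $\alpha$'s recomputed from the current stage value) and the input of the substep is admissible by the previous stage --- shows that each forward-Euler substep returns a state in $\mathcal{G}$; combining it convexly with $\overline{\bU}^n\in\mathcal{G}$ preserves admissibility, so inducting over the three stages gives $\overline{\bU}^{n+1}\in\mathcal{G}$.

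The argument is essentially bookkeeping and I do not anticipate a genuine obstacle; the single point that must be stated carefully is that in the SSP-RK3 case the CFL condition \eqref{eq:2d_convex_combination_dt} and the in-$\mathcal{G}$ requirement on the limited intermediate states have to be understood as imposed afresh at \emph{every} Runge--Kutta stage, so that the forward-Euler assertion may legitimately be invoked at each substep.
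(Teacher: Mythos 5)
Your proposal is correct and follows essentially the same route as the paper's proof: verify that \eqref{eq:2d_limited_decomp} is a convex combination of $\overline{\bU}_{i,j}^n$ and the limited intermediate states under the CFL condition \eqref{eq:2d_convex_combination_dt}, invoke convexity of $\mathcal{G}$, and then use that SSP-RK3 is a convex combination of forward-Euler stages. Your added bookkeeping (explicitly checking the five weights are non-negative and sum to one, and noting the $\theta$'s enter only the states, not the weights) and the remark that the hypotheses must be re-imposed at each Runge--Kutta stage are just more explicit versions of what the paper leaves implicit.
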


\begin{proof}
	Under the constraint \eqref{eq:2d_convex_combination_dt}, the limited cell average update $\overline{\bU}^{\texttt{Lim}}_{i,j}$ is a convex combination of $\overline{\bU}_{i,j}^n$ and $\widetilde{\bU}_{i\pm\frac12,j}^{\texttt{Lim},\mp}, \widetilde{\bU}_{i,j\pm\frac12}^{\texttt{Lim},\mp}$,
	thus it belongs to $\mathcal{G}$ due to the convexity of $\mathcal{G}$.
	Because the SSP-RK3 is a convex combination of forward-Euler stages,
	the high-order scheme equipped with the SSP-RK3 is also BP according to the convexity.
\end{proof}

\begin{remark}
	The scheme \eqref{eq:2d_limited_decomp} is conservative because it can be rewritten as a conservative scheme with limited numerical fluxes,
	e.g. for the $x$-direction,
	$\widehat{\bF}^{\texttt{L}}_{\xr,j} + \theta_{\xr,j}\Delta \widehat{\bF}_{\xr,j} = \theta_{\xr,j} \widehat{\bF}^{\texttt{H}}_{\xr,j} + (1 - \theta_{\xr,j}) \widehat{\bF}^{\texttt{L}}_{\xr,j}$,
	which is a convex combination of the high-order and low-order fluxes.
\end{remark}

The remaining goal is to determine the largest coefficients $\theta_{i\pm\frac12,j}, \theta_{i,j\pm\frac12}\in[0,1]$ such that $\widetilde{\bU}_{i\pm\frac12,j}^{\texttt{Lim},\mp}, \widetilde{\bU}_{i,j\pm\frac12}^{\texttt{Lim},\mp}\in\mathcal{G}$.
To save space, only the limitings in the $x$-direction are detailed below.

\subsection{Application to scalar conservation laws}
This section is devoted to applying the convex limiting approach to scalar conservation laws \eqref{eq:2d_scalar},
such that the numerical solutions satisfy the global or local MP.
The blending coefficient $\theta_{i+\frac12,j}\in[0,1]$ is chosen such that $u^{\min}_{i,j} \leqslant \tilde{u}_{i+\frac12,j}^{\texttt{Lim},-} \leqslant u^{\max}_{i,j}, u^{\min}_{i+1,j} \leqslant \tilde{u}_{i+\frac12,j}^{\texttt{Lim},+} \leqslant u^{\max}_{i+1,j}$.
To this end, the explicit expressions of the limited anti-diffusive flux are
\begin{align*}
	&\Delta\hat{f}^{\texttt{Lim}}_{\xr,j} =
	\begin{cases}
		\min\big\{\Delta\hat{f}_{\xr,j}, \Delta\hat{f}^{+}_{\xr,j} \big\},
		&\text{if}~ \Delta \hat{f}_{\xr,j} \geqslant 0, \\
		\max\big\{\Delta\hat{f}_{\xr,j}, \Delta\hat{f}^{-}_{\xr,j} \big\}, &\text{otherwise}, \\
	\end{cases}\\
	&\Delta\hat{f}^{+}_{\xr,j} = (\alpha_1)_{\xr,j}\min\big\{ \tilde{u}_{\xr,j}-u^{\min}_{i,j},
	u^{\max}_{i+1,j}-\tilde{u}_{\xr,j} \big\}, \\
	&\Delta\hat{f}^{-}_{\xr,j} =
	(\alpha_1)_{\xr,j}\max\big\{ u^{\min}_{i+1,j}-\tilde{u}_{\xr,j},
	\tilde{u}_{\xr,j}-u^{\max}_{i,j} \big\}.
\end{align*}
For the global MP, one can set $u^{\min}_{i,j} = m_0, u^{\max}_{i,j} = M_0, \forall i,j$ with $m_0, M_0$ defined in \eqref{eq:2d_scalar_g},
and for the local MP, the local bounds can be chosen based on the intermediate states
\begin{equation*}
	u^{\min}_{i,j} = \min\left\{\bar{u}_{i,j}^n, ~\tilde{u}_{\xl,j}, ~\tilde{u}_{\xr,j}\right\},~
	u^{\max}_{i,j} = \max\left\{\bar{u}_{i,j}^n, ~\tilde{u}_{\xl,j}, ~\tilde{u}_{\xr,j}\right\}.
\end{equation*}
Enforcing the local MP is useful to suppress spurious oscillations and to improve shock-capturing ability \cite{Guermond_2018_Second_SJoSC, Kuzmin_2012_Flux_book, Guermond_2016_Invariant_SJoNA}.
The final limited numerical flux is
\begin{equation}\label{eq:2d_flux_limited_scalar}
	\hat{f}^{\texttt{Lim}}_{\xr,j} = \hat{f}^{\texttt{L}}_{\xr,j} + \Delta\hat{f}^{\texttt{Lim}}_{\xr,j}.
\end{equation}

\subsection{Application to the compressible Euler equations}
This section focuses on enforcing the strict positivity of density and pressure, i.e., $\rho, p > \varepsilon$ (chosen as $10^{-13}$ in our numerical tests), and further limiting the solution based on shock sensors.
The limiting consists of three sequential steps.

\subsubsection{Positivity of density}
The first step is to impose the positivity of the density $\widetilde{\bU}_{\xr,j}^{\texttt{Lim},\pm,\rho} > \varepsilon$,
where $\bU^{*,\rho}$ denotes the density component of $\bU^{*}$.
The density component of the anti-diffusive flux is limited as
\begin{equation*}
	\Delta\widehat{\bF}^{\texttt{Lim},\rho}_{\xr,j} = \begin{cases}
		\min\left\{\Delta\widehat{\bF}^{\rho}_{\xr,j}, ~(\alpha_1)_{\xr,j}\left(\widetilde{\bU}^{\rho}_{\xr,j}-\varepsilon\right) \right\}, ~&\text{if}~ \Delta \widehat{\bF}^{\rho}_{\xr,j} \geqslant 0, \\
		\max\left\{\Delta\widehat{\bF}^{\rho}_{\xr,j}, ~(\alpha_1)_{\xr,j}\left(\varepsilon-\widetilde{\bU}^{\rho}_{\xr,j}\right) \right\}, ~&\text{otherwise}. \\
	\end{cases}
\end{equation*}
After this step, the density component of the numerical flux is modified as $\widehat{\bF}_{\xr,j}^{\texttt{Lim}, *, \rho} = \widehat{\bF}_{\xr,j}^{\texttt{L}, \rho} + \Delta\widehat{\bF}_{\xr,j}^{\texttt{Lim}, \rho}$,
while the momentum and energy components remain unchanged as the high-order flux $\widehat{\bF}_{\xr,j}^{\texttt{H}}$.

\subsubsection{Positivity of pressure}
The second step is to enforce the positivity of the pressure $p(\widetilde{\bU}_{\xr,j}^{\texttt{Lim},\pm}) > \varepsilon$,
where $p(\bU^{*})$ denotes the pressure recovered from $\bU^{*}$.
Let
\begin{equation*}
	\widetilde{\bU}_{\xr,j}^{\texttt{Lim},\pm} = \widetilde{\bU}_{\xr,j} \pm \dfrac{\theta_{\xr,j}\Delta \widehat{\bF}_{\xr,j}^{\texttt{Lim}, *}}{(\alpha_1)_{\xr,j}},\quad
	\Delta \widehat{\bF}_{\xr,j}^{\texttt{Lim}, *} = \widehat{\bF}_{\xr,j}^{\texttt{Lim}, *} - \widehat{\bF}_{\xr,j}^{\texttt{L}},
\end{equation*}
then the constraints yield two inequalities
\begin{equation}\label{eq:pressure_inequality}
	A_{\xr,j}\theta^2_{\xr,j} \pm B_{\xr,j}\theta_{\xr,j} < C_{\xr,j},
\end{equation}
with the coefficients (the subscript $(\cdot)_{\xr,j}$ is omitted on the right-hand side)
\begin{align*}
	A_{\xr,j} &= \dfrac{1}{2} \norm{ \Delta \widehat\bF^{\texttt{Lim}, *, \rho \bv} }_2^2
	- \Delta \widehat\bF^{\texttt{Lim}, *, \rho} \Delta \widehat\bF^{\texttt{Lim}, *, E}, \\
	B_{\xr,j} &= \alpha_1\left(\Delta \widehat\bF^{\texttt{Lim}, *, \rho} \widetilde\bU^{E}
	+ \widetilde\bU^{\rho} \Delta \widehat\bF^{\texttt{Lim}, *, E}
	- \Delta \widehat\bF^{\texttt{Lim}, *, \rho \bv}\cdot \widetilde\bU^{\rho \bv}
	- \varepsilon \Delta \widehat\bF^{\texttt{Lim}, *, \rho}\right), \\
	C_{\xr,j} &= \alpha_1^2\left(\widetilde\bU^{\rho} \widetilde\bU^{E}
	- \dfrac{1}{2} \norm{ \widetilde\bU^{\rho \bv} }_2^2
	- \varepsilon \widetilde\bU^{\rho}\right).
\end{align*}
Following \cite{Kuzmin_2020_Monolithic_CMiAMaE} and using $\theta_{\xr,j}^2 \leqslant \theta_{\xr,j}$, a linear sufficient condition for the constraints \eqref{eq:pressure_inequality} is
\begin{equation*}
	\left(\max\{0, A_{\xr,j}\} + \abs{B_{\xr,j}}\right)\theta_{\xr,j} < C_{\xr,j}.
\end{equation*}
Then the parameter is
\begin{equation*}
	\theta_{\xr,j} = \min\left\{1,~ \dfrac{C_{\xr,j}}{\max\{0, A_{\xr,j}\} + \abs{B_{\xr,j}}}\right\},
\end{equation*}
and the limited numerical flux is
\begin{equation}\label{eq:2d_flux_limited_Euler}
	\widehat{\bF}_{\xr,j}^{\texttt{Lim},**} = \widehat{\bF}_{\xr,j}^{\texttt{L}} + \theta_{\xr,j}\Delta\widehat{\bF}_{\xr,j}^{\texttt{Lim}, *}.
\end{equation}

\subsubsection{Shock sensor-based limiting}
Spurious oscillations are observed in the numerical solutions, especially near strong shock waves, if only the BP limitings are employed, see \Cref{sec:results}.
To reduce oscillations, we propose to further limit the numerical fluxes using another parameter $\theta_{\xr,j}^{s}$ based on shock sensors.
Consider the Jameson's shock sensor in \cite{Jameson_1981_Solutions_AJ},
\begin{equation*}
	(\varphi_1)_{i,j} = \dfrac{\abs{p_{i+1,j} - 2p_{i,j} + p_{i-1,j}}}{\abs{p_{i+1,j} + 2p_{i,j} + p_{i-1,j}}},
\end{equation*}
which was later improved in \cite{Ducros_1999_Large_JoCP} as a multiplication of two terms $(\varphi_1)_{i,j}(\widetilde{\varphi}_2)_{i,j}$,
with
\begin{equation*}
	(\widetilde{\varphi}_2)_{i,j} = \dfrac{(\nabla\cdot\bv)^2}{(\nabla\cdot\bv)^2 + (\nabla\times\bv)^2 + 10^{-40}},
\end{equation*}
known as the Ducros' shock sensor.
This paper only considers the sign of the velocity divergence rather than its magnitude, such that the shock waves can be located better, i.e.,
\begin{equation*}
	(\varphi_2)_{i,j} = \max\left\{\dfrac{-\nabla\cdot\bv}{\sqrt{(\nabla\cdot\bv)^2 + (\nabla\times\bv)^2 + 10^{-40}}}, ~0\right\}.
\end{equation*}
Finally, the blending coefficient is designed as
\begin{align*}
	&\theta_{\xr,j}^{s} = \exp(-\kappa (\varphi_1)_{\xr,j} (\varphi_2)_{\xr,j})\in (0, 1],\\
	&(\varphi_s)_{\xr,j} = \max\left\{(\varphi_s)_{i,j}, (\varphi_s)_{i+1,j}\right\}, ~s=1,2.
\end{align*}
The problem-dependent parameter $\kappa$ adjusts the strength of the limiting, and its optimal choice will be explored in the future.
The final limited numerical flux is
\begin{equation}\label{eq:2d_flux_limited_Euler_ss}
	\widehat{\bF}_{\xr,j}^{\texttt{Lim}} = \widehat{\bF}_{\xr,j}^{\texttt{L}} + \theta_{\xr,j}^{s}\Delta\widehat{\bF}_{\xr,j}^{\texttt{Lim}, **},
\end{equation}
with $\Delta\widehat{\bF}_{\xr,j}^{\texttt{Lim},**} = \widehat{\bF}_{\xr,j}^{\texttt{Lim},**} - \widehat{\bF}_{\xr,j}^{\texttt{L}}$.

\section{Scaling limiter for point value}\label{sec:2d_limiting_point}
We must also introduce BP limitings for the point value, achieved by using a simple scaling limiter \cite{Liu_1996_Nonoscillatory_SJoNA} directly on the high-order point values as no conservation is required for the point value update.

A first-order LLF scheme for the point value update at nodes can be chosen as
\begin{align}\label{eq:2d_llf_node}
	\bU_{\xr,\yr}^{\texttt{L}} = \bU_{\xr,\yr}^{n}
	&- \dfrac{2\Delta t^n}{\Delta x_i+\Delta x_{i+1}}
	\left(\widehat{\bF}^{\texttt{L}}_{i+1,\yr}
	- \widehat{\bF}^{\texttt{L}}_{i,\yr}\right) \nonumber\\
	&- \dfrac{2\Delta t^n}{\Delta y_j+\Delta y_{j+1}}
	\left(\widehat{\bF}^{\texttt{L}}_{\xr,j+1}
	- \widehat{\bF}^{\texttt{L}}_{\xr,j}\right),
\end{align}
with the LLF numerical fluxes
\begin{align*}
	&\widehat{\bF}^{\texttt{L}}_{i+1,\yr}:=\widehat{\bF}_1^{\texttt{LLF}}(\bU_{\xr,\yr}^n, \bU_{i+\frac32,\yr}^n),~
	\widehat{\bF}^{\texttt{L}}_{i,\yr}:=\widehat{\bF}_1^{\texttt{LLF}}(\bU_{i-\frac12,\yr}^n, \bU_{i+\frac12,\yr}^n), \\
	&\widehat{\bF}^{\texttt{L}}_{\xr,j+1}:=\widehat{\bF}_2^{\texttt{LLF}}(\bU_{\xr,\yr}^n, \bU_{\xr,j+\frac32}^n),~
	\widehat{\bF}^{\texttt{L}}_{\xr,j}:=\widehat{\bF}_2^{\texttt{LLF}}(\bU_{\xr,j-\frac12}^n, \bU_{\xr,j+\frac12}^n).
\end{align*}
For the vertical face-centered point value, this paper chooses the first-order LLF scheme as
\begin{equation}\label{eq:2d_llf_facex}
	\bU_{\xr,j}^{\texttt{L}} =\  \bU_{\xr,j}^{n}
	- \dfrac{2\Delta t^n}{\Delta x_i+\Delta x_{i+1}}
	\left(\widehat{\bF}^{\texttt{L}}_{i+1,j}
	- \widehat{\bF}^{\texttt{L}}_{i,j}\right) 
	- \dfrac{\Delta t^n}{\Delta y_j}
	\left(\widehat{\bF}^{\texttt{L}}_{\xr,j+\frac12}
	- \widehat{\bF}^{\texttt{L}}_{\xr,j-\frac12}\right),
\end{equation}
with the LLF numerical fluxes
\begin{align*}
	&\widehat{\bF}^{\texttt{L}}_{i+1,j}:=\widehat{\bF}_1^{\texttt{LLF}}(\bU_{\xr,j}^n, \bU_{i+\frac32,j}^n),~
	\widehat{\bF}^{\texttt{L}}_{i,j}:=\widehat{\bF}_1^{\texttt{LLF}}(\bU_{i-\frac12,j}^n, \bU_{i+\frac12,j}^n), \\
	&\widehat{\bF}^{\texttt{L}}_{\xr,j+\frac12}:=\widehat{\bF}_2^{\texttt{LLF}}(\bU_{\xr,j}^n, \bU_{\xr,\yr}^n),~
	\widehat{\bF}^{\texttt{L}}_{\xr,j-\frac12}:=\widehat{\bF}_2^{\texttt{LLF}}(\bU_{\xr,j-\frac12}^n, \bU_{\xr,j}^n).
\end{align*}
Note that $\bU_{i+\frac12,j+\frac12}^n$ and $\bU_{i+\frac12,j-\frac12}^n$ are used to compute the fluxes in the $y$-direction, while the corresponding spatial mesh size used in the scheme is $\Delta y_j$ rather than $\Delta y_j/2$.
This is because we restrict the spatial stencil for the first-order scheme within that for the higher-order AF schemes, and let the time step sizes required for the BP property of the first-order schemes \eqref{eq:2d_llf_facex} and \eqref{eq:2d_llf_node} be consistent in the $y$-direction with uniform meshes.
There may be other choices of the first-order schemes for the point value update, which needs further exploration. 
The LLF scheme for the face-centered value on the horizontal face is similar,
\begin{equation}\label{eq:2d_llf_facey}
	\bU_{i,\yr}^{\texttt{L}} = \bU_{i,\yr}^{n}
	- \dfrac{\Delta t^n}{\Delta x_i}
	\left(\widehat{\bF}^{\texttt{L}}_{i+\frac12,\yr}
	- \widehat{\bF}^{\texttt{L}}_{i-\frac12,\yr}\right)
	- \dfrac{2\Delta t^n}{\Delta y_j+\Delta y_{j+1}}
	\left(\widehat{\bF}^{\texttt{L}}_{i,j+1}
	- \widehat{\bF}^{\texttt{L}}_{i,j}\right),
\end{equation}
with similarly defined LLF numerical fluxes as for the vertical face.

\begin{lemma}
	The LLF schemes for the point value updates \eqref{eq:2d_llf_node}-\eqref{eq:2d_llf_facey} are BP under the following constraint for the time step size
	\begin{align}
		\Delta t^n \leqslant \frac12\min\Bigg\{
		&\dfrac{\Delta x_i + \Delta x_{i+1}}{2\left((\alpha_1)_{i,j+\frac12}+ (\alpha_1)_{i+1,j+\frac12}\right)},
		\dfrac{\Delta y_j + \Delta y_{j+1}}{2\left((\alpha_2)_{i+\frac12,j}+ (\alpha_2)_{i+\frac12,j+1}\right)}, \nonumber\\
		&\dfrac{\Delta x_i + \Delta x_{i+1}}{2\left((\alpha_1)_{i,j}+ (\alpha_1)_{i+1,j}\right)},
		\dfrac{\Delta y_j}{\left((\alpha_2)_{i+\frac12,j+\frac12}+ (\alpha_2)_{i+\frac12,j-\frac12}\right)}, \nonumber\\
		&\dfrac{\Delta x_i}{\left((\alpha_1)_{i+\frac12,j+\frac12}+ (\alpha_1)_{i-\frac12,j+\frac12}\right)},
		\dfrac{\Delta y_j + \Delta y_{j+1}}{2\left((\alpha_2)_{i,j} + (\alpha_2)_{i,j+1}\right)} 
		 \Bigg\}. \label{eq:2d_pnt_llf_dt}
	\end{align}
\end{lemma}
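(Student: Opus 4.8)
The plan is to replicate the structure of the proof of Lemma~\ref{lem:llf_g}: for each of the three first-order LLF point-value schemes \eqref{eq:2d_llf_node}, \eqref{eq:2d_llf_facex} and \eqref{eq:2d_llf_facey}, I would rewrite the right-hand side as a convex combination of the point value at time $t^n$ and a finite list of intermediate states, each of the precise form $\frac12(\bU_a+\bU_b)+\frac{1}{2\alpha}(\bF_\ell(\bU_a)-\bF_\ell(\bU_b))$ with $\alpha=\max\{\varrho_\ell(\bU_a),\varrho_\ell(\bU_b)\}$ that is handled by Lemmas~\ref{lem:lf_scalar} and \ref{lem:lf_euler}. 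The basic building block is one-dimensional: a three-point LLF update $\bU_P^{\texttt{L}}=\bU_P^n-\nu\big(\widehat{\bF}_\ell^{\texttt{LLF}}(\bU_P^n,\bU_R^n)-\widehat{\bF}_\ell^{\texttt{LLF}}(\bU_L^n,\bU_P^n)\big)$, with the standard LLF flux $\widehat{\bF}_\ell^{\texttt{LLF}}(\bU_a,\bU_b)=\frac12(\bF_\ell(\bU_a)+\bF_\ell(\bU_b))-\frac{\alpha}{2}(\bU_b-\bU_a)$, can be rearranged exactly as in the derivation of \eqref{eq:2d_lo_decomp} into $[1-\nu(\alpha_L+\alpha_R)]\bU_P^n+\nu\alpha_L\widetilde{\bU}_-+\nu\alpha_R\widetilde{\bU}_+$, where $\widetilde{\bU}_\mp$ are the intermediate states attached to the left/right interface and the $\bF_\ell(\bU_P^n)$ terms cancel.

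Applying this block separately in the $x$- and the $y$-direction and adding the two increments, each of \eqref{eq:2d_llf_node}, \eqref{eq:2d_llf_facex}, \eqref{eq:2d_llf_facey} takes the abstract form $\bU^{\texttt{L}}=\big(1-\sum_k c_k\big)\bU^n+\sum_k c_k\widetilde{\bU}_k$, with each coefficient $c_k\geq0$ equal to $\Delta t^n$ times the relevant flux viscosity coefficient divided by the effective cell width in that direction; the latter is $\frac12(\Delta x_i+\Delta x_{i+1})$ or $\frac12(\Delta y_j+\Delta y_{j+1})$ when the stencil straddles two cells, and (by the convention fixed right after \eqref{eq:2d_llf_facex}) $\Delta y_j$ or $\Delta x_i$ when it stays inside a single cell. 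Since every $\alpha$ in \eqref{eq:2d_llf_node}--\eqref{eq:2d_llf_facey} is chosen as the maximum of the spectral radii of the two arguments of the corresponding LLF flux, Lemma~\ref{lem:lf_scalar} in the scalar case and Lemma~\ref{lem:lf_euler} for the Euler equations give $\widetilde{\bU}_k\in\mathcal{G}$ for every $k$. It then remains only to ensure $1-\sum_k c_k\geq0$; splitting the budget evenly between the two directions, it suffices that the two coefficients coming from each direction sum to at most $\frac12$, and writing this condition out for each of the six (point-value type, direction) pairs reproduces exactly the six entries inside the minimum in \eqref{eq:2d_pnt_llf_dt}, the overall prefactor $\frac12$ being this even split. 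Under \eqref{eq:2d_pnt_llf_dt} the representation is thus a genuine convex combination, and convexity of $\mathcal{G}$ yields $\bU^{\texttt{L}}\in\mathcal{G}$, i.e.\ the schemes are BP.

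I expect the only real work to be the bookkeeping in the second step: for each of the three schemes, correctly identifying which neighbouring point values play the roles of $\bU_L,\bU_P,\bU_R$ in each direction, which $\alpha$ belongs to which flux, and which effective mesh size appears --- in particular the deliberate use of $\Delta y_j$ (rather than $\Delta y_j/2$) in the intra-cell direction of \eqref{eq:2d_llf_facex}, and $\Delta x_i$ in \eqref{eq:2d_llf_facey}, which is precisely what makes the six time-step restrictions line up into \eqref{eq:2d_pnt_llf_dt}. The algebra itself --- the cancellation of the $\bF_\ell(\bU_P^n)$ terms and the check that the coefficients sum to one --- is identical to the cell-average case and routine.
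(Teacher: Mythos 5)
Your proposal is correct and is exactly the argument the paper intends: the paper gives no details, stating only that "the proof is similar to that for Lemma~\ref{lem:llf_g}," i.e.\ the same convex-combination decomposition into intermediate states covered by Lemmas~\ref{lem:lf_scalar} and~\ref{lem:lf_euler}, with the six time-step bounds arising from splitting the coefficient budget evenly between the two directions as you describe. Your bookkeeping of the effective mesh widths ($\frac12(\Delta x_i+\Delta x_{i+1})$ versus $\Delta y_j$, etc.) and of which $\alpha$ attaches to which interface matches \eqref{eq:2d_pnt_llf_dt} entry by entry.
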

The proof is similar to that for \Cref{lem:llf_g}.

\begin{remark}
	For uniform meshes, and if taking the maximal wave speeds in the domain, the following condition
	\begin{equation}\label{eq:uniform_mesh_dt}
		\Delta t^n \leqslant \frac14\min\left\{\dfrac{\Delta x}{\norm{\varrho_1}_\infty},
		\dfrac{\Delta y}{\norm{\varrho_2}_\infty}
		\right\}
	\end{equation}
	suffices to satisfy the condition \eqref{eq:2d_convex_combination_dt} and \eqref{eq:2d_pnt_llf_dt}.
\end{remark}


The limited state is obtained by blending the high-order AF scheme \eqref{eq:semi_pnt_2d} with the forward Euler step and the LLF schemes \eqref{eq:2d_llf_node}-\eqref{eq:2d_llf_facey} as $\bU_{\sigma}^{\texttt{Lim}} = \theta_{\sigma} \bU_{\sigma}^{\texttt{H}} + (1-\theta_{\sigma}) \bU_{\sigma}^{\texttt{L}}$,
such that $\bU_{\sigma}^{\texttt{Lim}}\in\mathcal{G}$.

\subsection{Application to scalar conservation laws}
This section enforces the global MP $m_0 \leqslant u_{\sigma}^{\texttt{Lim}} \leqslant M_0$ for the point value update in scalar case,
and the coefficient is determined by
\begin{equation*}
	\theta_{\sigma} = \min\left\{ \left|\dfrac{u_{\sigma}^{\texttt{L}}-m_0}{u_{\sigma}^{\texttt{L}}-u_{\sigma}^{\texttt{H}}} \right|, ~\left| \dfrac{M_0-u_{\sigma}^{\texttt{L}}}{u_{\sigma}^{\texttt{H}}-u_{\sigma}^{\texttt{L}}} \right|, ~1
	\right\}.
\end{equation*}
The final limited state is
\begin{equation}\label{eq:2d_pnt_limited_state_scalar}
	u_{\sigma}^{\texttt{Lim}} = \theta_{\sigma} u_{\sigma}^{\texttt{H}} + \left(1-\theta_{\sigma}\right) u_{\sigma}^{\texttt{L}}.
\end{equation}

\subsection{Application to the compressible Euler equations}
The limiting consists of two steps.
First, the high-order state $\bU_{\sigma}^{\texttt{H}}$ is modified as $\bU_{\sigma}^{\texttt{Lim}, *}$,
such that its density component satisfies $\bU_{\sigma}^{\texttt{Lim}, *, \rho} > \varepsilon$.
Solving this inequality gives the coefficient
\begin{equation*}
	\theta_{\sigma}^{*} = \min\left\{ \dfrac{\bU_{\sigma}^{\texttt{L}, \rho}-\varepsilon}{\bU_{\sigma}^{\texttt{L}, \rho}-\bU_{\sigma}^{\texttt{H}, \rho}}, ~1
	\right\}.
\end{equation*}
The density component of the limited state is $\bU_{\sigma}^{\texttt{Lim}, *, \rho} = \theta^{*}_{\sigma} \bU_{\sigma}^{\texttt{H}, \rho} + (1-\theta^{*}_{\sigma}) \bU_{\sigma}^{\texttt{L}, \rho}$,
with the momentum and energy components remaining the same as the high-order state $\bU_{\sigma}^{\texttt{H}}$.

Then the limited state $\bU_{\sigma}^{\texttt{Lim}, *}$ is modified as $\bU_{\sigma}^{\texttt{Lim}}$,
such that it recovers positive pressure, i.e., $p\left(\bU_{\sigma}^{\texttt{Lim}}\right) > \varepsilon$.
Let $\bU_{\sigma}^{\texttt{Lim}} = \theta^{**}_{\sigma} \bU_{\sigma}^{\texttt{Lim}, *} + (1-\theta^{**}_{\sigma}) \bU_{\sigma}^{\texttt{L}}$.
The pressure is a concave function (see e.g. \cite{Zhang_2011_Maximum_PotRSAMPaES}) of the conservative variables, so that $p\left(\bU_{\sigma}^{\texttt{Lim}}\right) > \theta^{**}_{\sigma}p\left(\bU_{\sigma}^{\texttt{Lim}, *}\right) + \left(1-\theta^{**}_{\sigma}\right)p\left(\bU_{\sigma}^{\texttt{L}}\right)$
based on Jensen's inequality and $\bU_{\sigma}^{\texttt{Lim}, *, \rho} > 0$, $\bU_{\sigma}^{\texttt{L}, \rho} > 0$, $\theta_{\sigma}^{**} \in [0,1]$.
Thus a sufficient condition is
\begin{equation*}
	\theta_{\sigma}^{**} = \min\left\{
	\dfrac{p\left(\bU_{\sigma}^{\texttt{L}}\right) - \varepsilon}{p\left(\bU_{\sigma}^{\texttt{L}}\right) - p\left(\bU_{\sigma}^{\texttt{Lim}, *}\right)}, ~1
	\right\},
\end{equation*}
and the final limited state is
\begin{equation}\label{eq:2d_pnt_limited_state_Euler}
	\bU_{\sigma}^{\texttt{Lim}} = \theta^{**}_{\sigma} \bU_{\sigma}^{\texttt{Lim}, *} + \left(1-\theta^{**}_{\sigma}\right) \bU_{\sigma}^{\texttt{L}}.
\end{equation}

Let us summarize the main results of the BP AF methods in this paper.
\begin{proposition}
	If the initial numerical solution $\overline{\bU}_{i,j}^0, \bU_{\sigma}^0\in\mathcal{G}$ for all $i,j,\sigma$,
	and the time step size satisfies \eqref{eq:2d_convex_combination_dt} and \eqref{eq:2d_pnt_llf_dt},
	then the AF methods \eqref{eq:semi_av_2d}-\eqref{eq:semi_pnt_2d} equipped with the SSP-RK3 \eqref{eq:ssp_rk3} and the BP limitings
	\begin{itemize}[leftmargin=*]
		\item \eqref{eq:2d_flux_limited_scalar} and \eqref{eq:2d_pnt_limited_state_scalar} preserve the maximum principle for scalar case;
		\item \eqref{eq:2d_flux_limited_Euler} and \eqref{eq:2d_pnt_limited_state_Euler} preserve the density and pressure positivity for the Euler equations.
	\end{itemize}
\end{proposition}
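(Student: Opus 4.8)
The plan is to argue by induction on the time level $n$, reducing the claim to the behaviour of a single \emph{limited} forward-Euler substep and then invoking the convexity of $\mathcal{G}$ to lift the result to the full SSP-RK3 scheme. Assume $\overline{\bU}_{i,j}^n,\bU_\sigma^n\in\mathcal{G}$ for all $i,j,\sigma$, and that the CFL restrictions \eqref{eq:2d_convex_combination_dt} and \eqref{eq:2d_pnt_llf_dt} hold (with the local wave-speed estimates $(\alpha_1),(\alpha_2)$ evaluated from the data of the stage under consideration; by the remark, the uniform-mesh bound \eqref{eq:uniform_mesh_dt} enforces both simultaneously). The SSP-RK3 update \eqref{eq:ssp_rk3} writes $\bU^{n+1}$ as a convex combination of $\bU^n$ and the outputs of forward-Euler substeps applied to $\bU^n$, $\bU^{*}$, $\bU^{**}$; hence, once a single limited forward-Euler substep is shown to map $\mathcal{G}$-data to $\mathcal{G}$-data, the intermediate stages $\bU^{*},\bU^{**}$ and the final $\bU^{n+1}$ all lie in $\mathcal{G}$ by iterated convex combinations.

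For the cell average, under \eqref{eq:2d_convex_combination_dt} the limited update \eqref{eq:2d_limited_decomp} is a convex combination of $\overline{\bU}_{i,j}^n$ and the limited intermediate states $\widetilde{\bU}_{i\pm\frac12,j}^{\texttt{Lim},\mp}$, $\widetilde{\bU}_{i,j\pm\frac12}^{\texttt{Lim},\mp}$ (this is precisely the cell-average proposition proved above), so it remains to check that these states belong to $\mathcal{G}$. In the scalar case this is immediate: $\widetilde{u}_{\xr,j}\in[m_0,M_0]$ by \Cref{lem:lf_scalar}, and the thresholds defining $\Delta\hat f^{\texttt{Lim}}_{\xr,j}$ are chosen exactly so that $\widetilde{u}_{\xr,j}^{\texttt{Lim},\pm}$ stays within the (global or local) bounds, which are contained in $[m_0,M_0]$. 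For the Euler equations one uses \Cref{lem:lf_euler} to get $\widetilde{\bU}_{\xr,j}\in\mathcal{G}$, then follows the three sequential limiting steps: the density step forces $\widetilde{\bU}_{\xr,j}^{\texttt{Lim},*,\rho}>\varepsilon$; the pressure step, via the quadratic inequality \eqref{eq:pressure_inequality} and its linear sufficient condition, forces $p(\widetilde{\bU}_{\xr,j}^{\texttt{Lim},**,\pm})>\varepsilon$; and the shock-sensor step rescales the anti-diffusive flux by $\theta^{s}_{\xr,j}\in(0,1]$, so that $\widetilde{\bU}_{\xr,j}^{\texttt{Lim},\pm}=(1-\theta^{s}_{\xr,j})\widetilde{\bU}_{\xr,j}+\theta^{s}_{\xr,j}\widetilde{\bU}_{\xr,j}^{\texttt{Lim},**,\pm}$ is a convex combination of two states in $\mathcal{G}$, hence in $\mathcal{G}$.

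For the point value, the limited state is $\bU_\sigma^{\texttt{Lim}}=\theta_\sigma\bU_\sigma^{\texttt{H}}+(1-\theta_\sigma)\bU_\sigma^{\texttt{L}}$ with $\theta_\sigma\in[0,1]$, so it suffices that $\bU_\sigma^{\texttt{L}}\in\mathcal{G}$ and that the scaling coefficient keeps the blend admissible. Under \eqref{eq:2d_pnt_llf_dt}, the first-order LLF point-value schemes \eqref{eq:2d_llf_node}--\eqref{eq:2d_llf_facey} are convex combinations of nodal/face values at time $t^n$ and LLF intermediate states of the form treated in \Cref{lem:lf_scalar}/\Cref{lem:lf_euler}, hence $\bU_\sigma^{\texttt{L}}\in\mathcal{G}$ (the lemma immediately preceding the present proposition). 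In the scalar case the stated $\theta_\sigma$ is exactly the largest value keeping $m_0\le u_\sigma^{\texttt{Lim}}\le M_0$; in the Euler case the two-step scaling limiter first enforces $\bU_\sigma^{\texttt{Lim},*,\rho}>\varepsilon$ and then, using concavity of $p$ together with Jensen's inequality and $\bU_\sigma^{\texttt{Lim},*,\rho},\bU_\sigma^{\texttt{L},\rho}>0$, enforces $p(\bU_\sigma^{\texttt{Lim}})>\varepsilon$, so $\bU_\sigma^{\texttt{Lim}}\in\mathcal{G}$. Combining the cell-average and point-value arguments, one limited forward-Euler substep preserves $\mathcal{G}$, and the induction closes.

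I expect the only delicate point to be the bookkeeping that the wave-speed parameters $(\alpha_1),(\alpha_2)$ entering the two CFL restrictions are consistent across the three RK stages — one either re-evaluates them stage by stage, or, as in the remark, replaces them by global maxima so that \eqref{eq:uniform_mesh_dt} controls both \eqref{eq:2d_convex_combination_dt} and \eqref{eq:2d_pnt_llf_dt} at once. Everything else is a direct assembly of \Cref{lem:lf_scalar}, \Cref{lem:lf_euler}, \Cref{lem:llf_g}, the cell-average proposition, the point-value LLF lemma, and the explicit limiter formulas, together with the convexity of $\mathcal{G}$.
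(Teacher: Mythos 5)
Your proposal is correct and follows exactly the route the paper intends: this proposition is stated as a summary, with no separate proof given, precisely because it is the assembly of \Cref{lem:lf_scalar}, \Cref{lem:lf_euler}, \Cref{lem:llf_g}, the cell-average convex-combination proposition, the point-value LLF lemma, the explicit limiter constructions, and the convexity of $\mathcal{G}$ under SSP-RK3 — which is what you carry out. Your extra remarks (that the shock-sensor rescaling is a further convex combination and hence harmless for the BP property, and that the wave speeds must be re-evaluated stage by stage or replaced by the global bound \eqref{eq:uniform_mesh_dt}) are accurate and, if anything, slightly more careful than the paper itself.
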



\section{Numerical results}\label{sec:results}
This section presents some numerical results to verify the proposed AF methods' accuracy, BP property, and shock-capturing ability.
Unless otherwise stated, the CFL number is chosen as $0.25$ and the adiabatic index is $\gamma=1.4$ for the Euler equations.
In the 2D plots, the numerical solutions are visualized on a refined mesh with the half mesh size, where the values at the grid points are the cell averages or point values on the original mesh.

\begin{example}[Advection equation]\label{ex:2d_advection_discontinuity}\rm
	This test solves the 2D advection equation $u_t + u_x + u_y = 0$,
	on the periodic domain $[0,1]\times[0,1]$ with the following initial data
	\begin{equation*}
		u_0(x,y) = \begin{cases}
			1-\abs{5r}, &\text{if}\quad r=\sqrt{(x-0.25)^2+(y-0.25)^2}<0.2, \\
			1, &\text{if}\quad \max\{\abs{x-0.75}, ~\abs{y-0.75}\} < 0.2, \\
			0, &\text{otherwise}. \\
		\end{cases}
	\end{equation*}
	It is solved for two periods, i.e., until $T= 2$, and used to verify that the BP limitings can preserve the MP for both the cell average and point value. 

For the advection equation, the JS and FVS are equivalent.
The results on the uniform $100\times 100$ mesh obtained without and with BP limitings imposing the global MP for the cell average and point value are presented in \Cref{fig:2d_advection_discontinuity_surface}, as well as the cut-line along $y=x$.
The BP limitings suppress the oscillations well near the discontinuities and do not reduce the resolution.
The ranges of the numerical solutions with different limitings are listed in \Cref{tab:2d_advection_discontinuity_bounds}, showing the minimum and maximum of both the cell averages and point values.
The activation of either BP limiting for the cell average or point value fails to preserve the bound $[0,1]$.
Only when both the BP limitings are performed on the cell average and point value, the MP is achieved,
demonstrating that it is necessary to use the two BP limitings simultaneously.

\begin{figure}[htbp!]
	\centering
	\begin{subfigure}[b]{0.31\textwidth}
		\centering
		\includegraphics[width=1.0\linewidth]{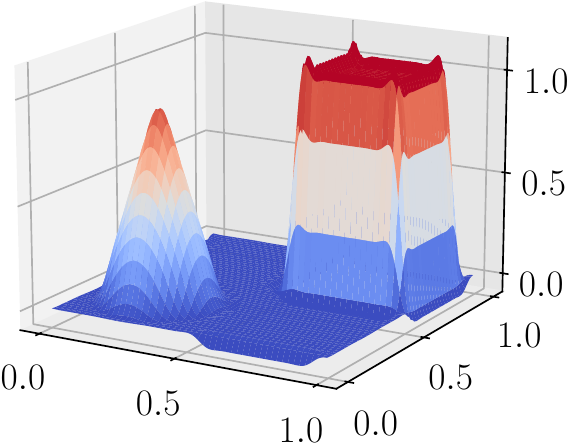}
	\end{subfigure}
	\begin{subfigure}[b]{0.31\textwidth}
		\centering
		\includegraphics[width=1.0\linewidth]{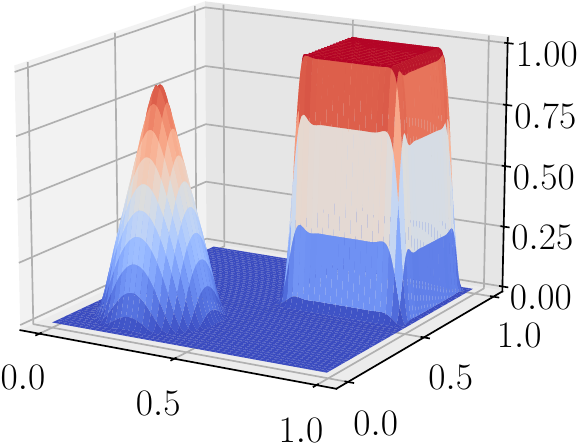}
	\end{subfigure}
        \begin{subfigure}[b]{0.35\textwidth}
		\centering
		\includegraphics[width=1.0\linewidth]{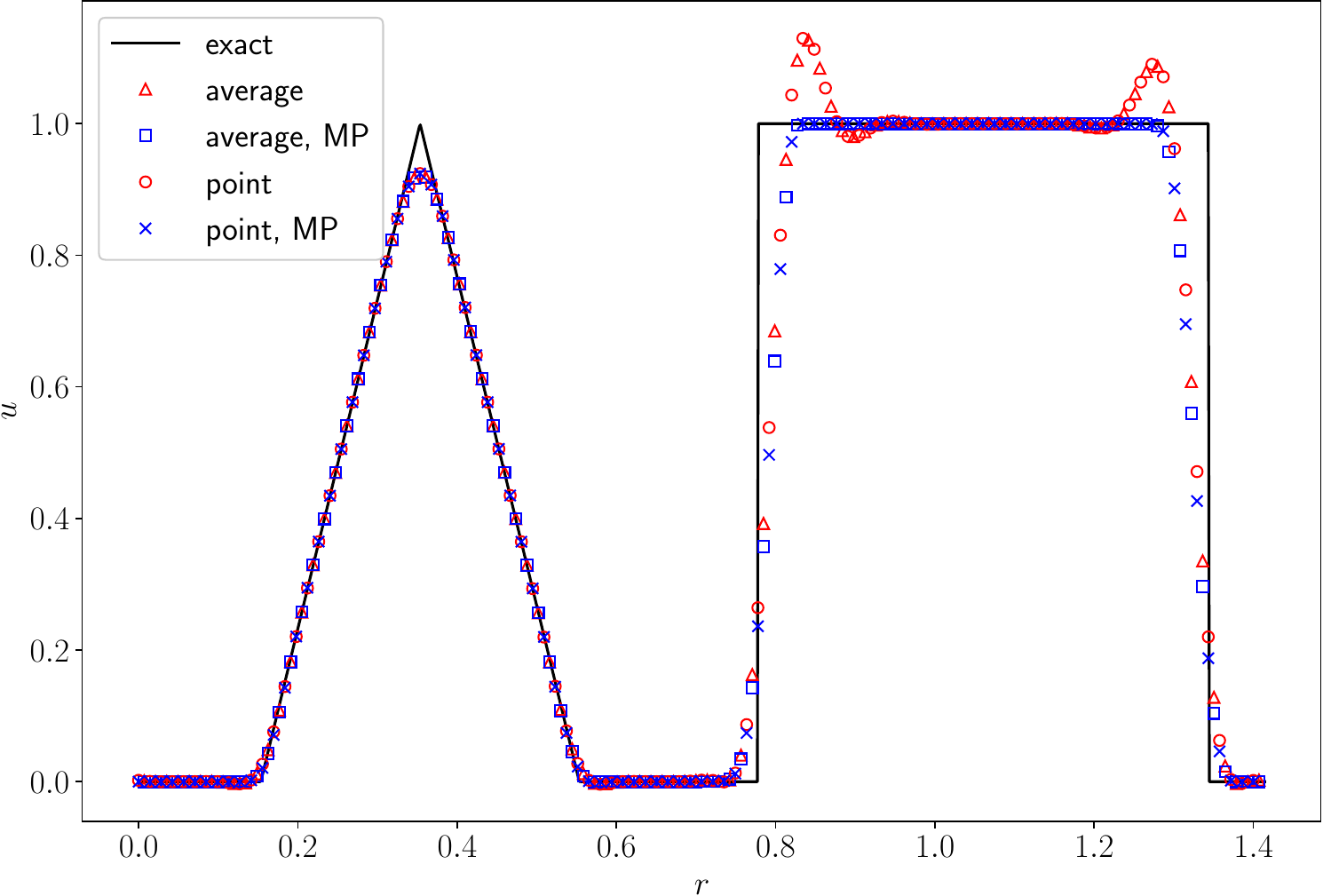}
	\end{subfigure}
	\caption{\Cref{ex:2d_advection_discontinuity}, 2D advection equation.
        The numerical solution is obtained based on the JS on the uniform $100\times100$ mesh.
        From left to right: without any limiting, with BP limitings imposing the global MP, cut-line along $y=x$.}
	\label{fig:2d_advection_discontinuity_surface}
\end{figure}

\begin{table}[htbp!]
	\centering
	{\scriptsize 
	\begin{tabular}{l|r|r}
		\hline\hline
	\diagbox{cell average}{point value}  & no limiting & global MP \\ \hline
	no limiting   &  $[-\num{6.6e-02}, 1+\num{1.3e-01}]$\quad{\color{red}\xmark}   &      $[-\num{5.3e-02}, 1+\num{5.7e-04}]$\quad{\color{red}\xmark}        \\ \hline
	global MP   &  $[-\num{1.4e-03}, 1+\num{5.0e-03}]$\quad{\color{red}\xmark}   &      $[0, 1]$\quad{\color{blue}\cmark}        \\ \hline
	local MP   &  $[-\num{3.1e-04}, 1+\num{4.3e-08}]$\quad{\color{red}\xmark}   &      $[0, 1-\num{8.6e-12}]$\quad{\color{blue}\cmark}        \\
		\hline\hline
	\end{tabular}
}
	\caption{\Cref{ex:2d_advection_discontinuity}, 2D advection equation.
	The ranges of the numerical solutions after two periods with different limitings.}
	\label{tab:2d_advection_discontinuity_bounds}
\end{table}
\end{example}

\begin{example}[Burgers' equation]\label{ex:2d_burgers}\rm
	Consider 2D Burgers' equation $u_t + \left(\frac12 u^2\right)_x + \left(\frac12 u^2\right)_y = 0$
	on the periodic domain $[0,1]\times[0,1]$.
	The initial condition is a sine wave $u_0(x, y) = 0.5 + \sin(2\pi(x+y))$.
	This test is solved until $T=0.3$, when the shock waves have emerged.

\Cref{fig:2d_burgers_shock_surface} plots the solutions using the LLF FVS on the uniform $100\times100$ mesh without and with BP limitings imposing local MP for the cell average and global MP for the point value, as well as the cut-line along $y=x$.
The oscillations near the shock waves are suppressed well when the limitings are activated, and the numerical solutions agree well with the reference solution.
The blending coefficients $\theta_{\xr,j}, \theta_{i,\yr}$ for the cell average and $\theta_{\sigma}$ for the point value are also presented in \Cref{fig:2d_burgers_shock_limiting}, verifying that the limitings are only locally activated near the shock waves.
 
\begin{figure}[htbp!]
	\centering
	\begin{subfigure}[b]{0.31\textwidth}
		\centering
		\includegraphics[width=\linewidth]{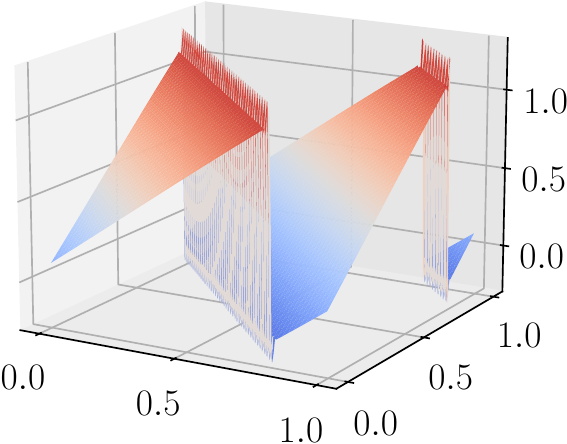}
	\end{subfigure}
	\begin{subfigure}[b]{0.31\textwidth}
		\centering
		\includegraphics[width=\linewidth]{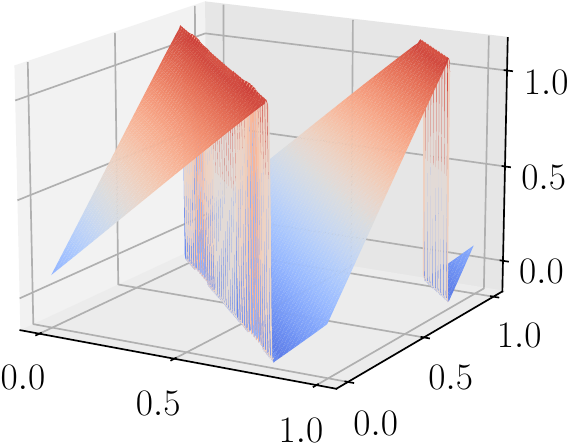}
	\end{subfigure}
        \begin{subfigure}[b]{0.32\textwidth}
		\centering
		\includegraphics[width=\linewidth]{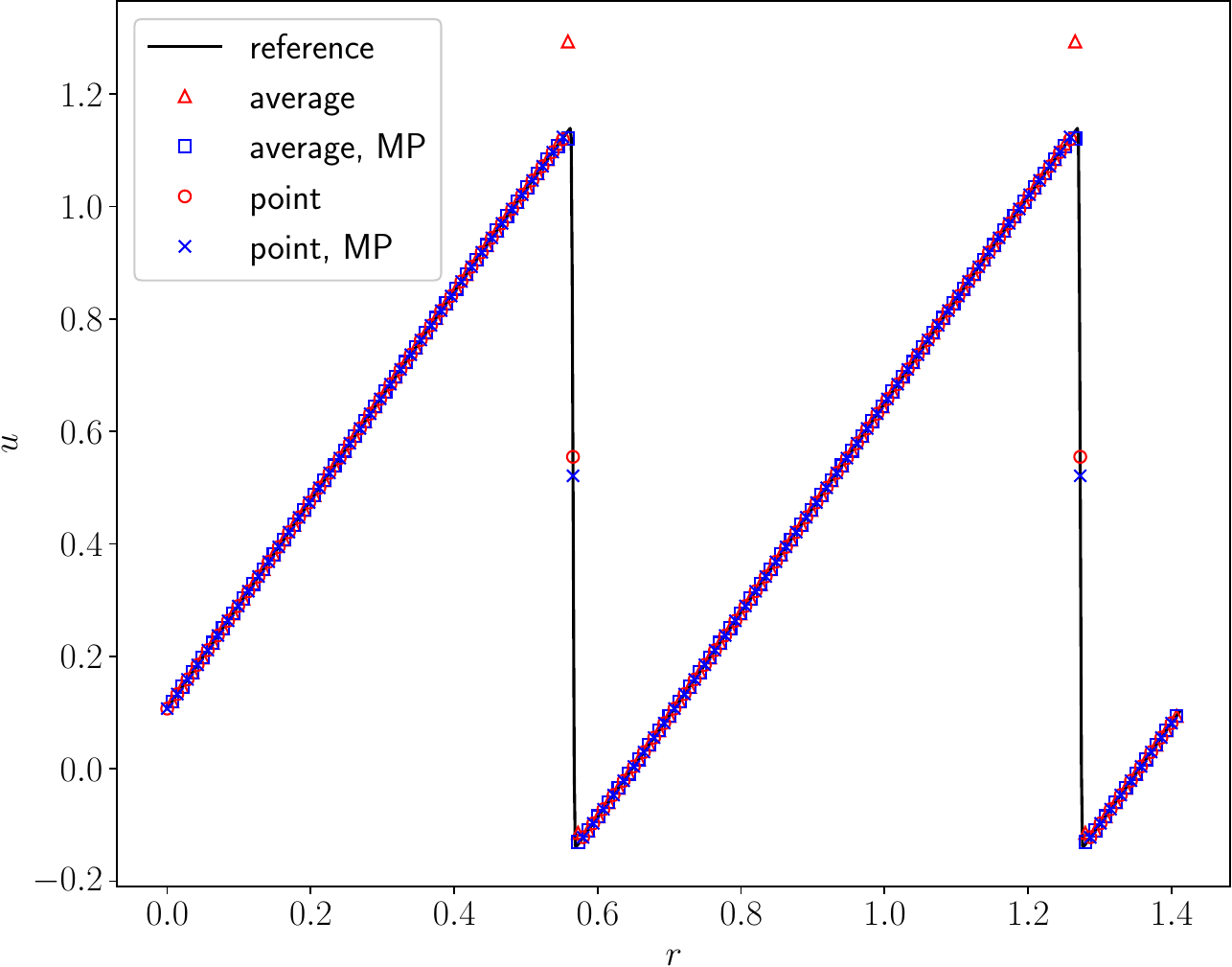}
	\end{subfigure}
	\caption{\Cref{ex:2d_burgers}, 2D Burgers' equation.
	The numerical solution is obtained using the LLF FVS on the uniform $100\times100$ mesh.
 From left to right: without limiting, with BP limiting, cut-line along $y=x$.}
	\label{fig:2d_burgers_shock_surface}
\end{figure}

\begin{figure}[htbp!]
	\centering
	\begin{subfigure}[b]{0.32\textwidth}
		\centering
		\includegraphics[width=\linewidth]{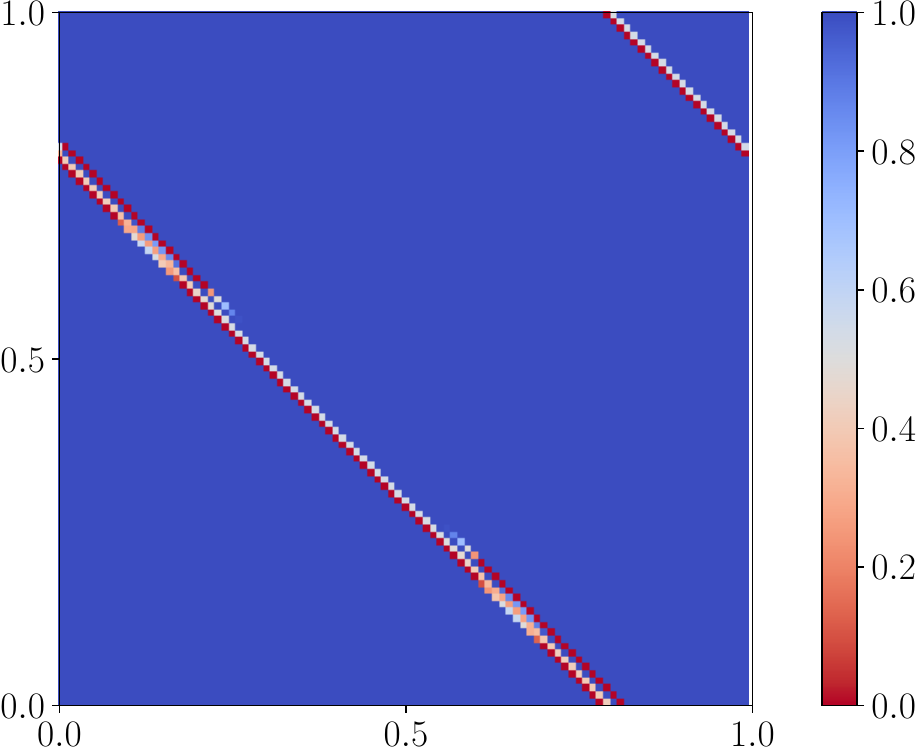}
	\end{subfigure}
	\begin{subfigure}[b]{0.32\textwidth}
		\centering
		\includegraphics[width=\linewidth]{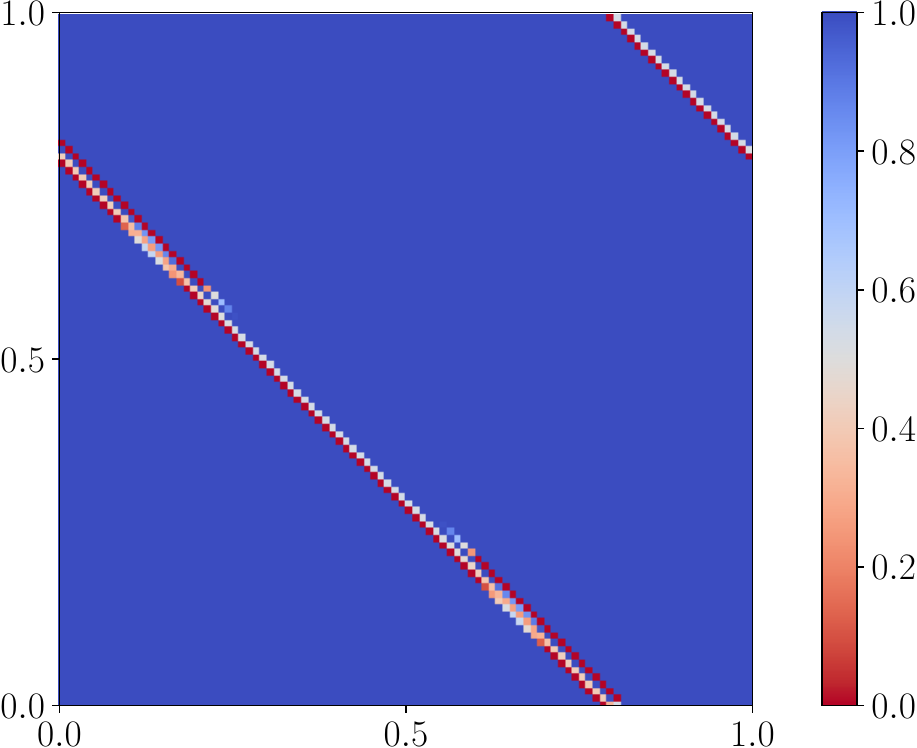}
	\end{subfigure}
	\begin{subfigure}[b]{0.32\textwidth}
		\centering
		\includegraphics[width=\linewidth]{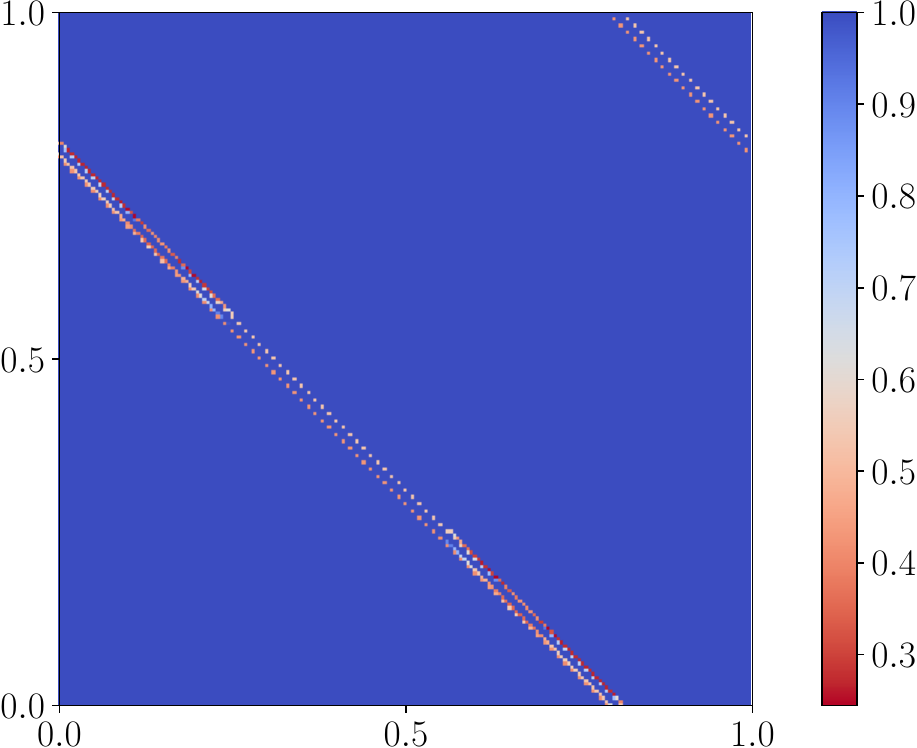}
	\end{subfigure}
	\caption{\Cref{ex:2d_burgers}, 2D Burgers' equation.
	The blending coefficients used in the limitings.
	From left to right: $\theta_{\xr,j}$ and $\theta_{i,\yr}$ for the cell average, $\theta_{\sigma}$ for the point value.}
	\label{fig:2d_burgers_shock_limiting}
\end{figure}
\end{example}

\begin{example}[2D isentropic vortex]\label{ex:2d_vortex}\rm
	This test describes an isentropic vortex propagating periodically at a constant speed $(1,1)$ in the 2D domain $[-5,5]\times[-5,5]$.
	The initial condition is
	\begin{equation*}
		\rho = T_0^{\frac{1}{\gamma}},~
		(v_1, v_2) = (1, 1) + k_0(y, -x),~
		p = T_0\rho,~
		k_0=\dfrac{\epsilon}{2\pi}e^{0.5(1-r^2)},~
		T_0 = 1 - \dfrac{\gamma-1}{2\gamma}k_0^2,
	\end{equation*}
	where $r^2 = x^2+y^2$, and $\epsilon=5$ is the vortex strength.
	The problem is solved for one period until $T = 10$.

Figure \ref{fig:2d_vortex_accuracy} shows the errors and corresponding convergence rates of the conservative variables in the $\ell^1$ norm.
Similar to the 1D case \cite{Duan_2024_Active_SJoSC}, the AF methods based on the JS and all the FVS except for the SW FVS achieve the designed third-order accuracy.
The convergence rate of the SW FVS reduces to around $2.5$,
which happens when the eigenvalue is close to zero, similar to the ``entropy glitch'' in the literature.
The common entropy fix by Harten and Hyman \cite{Harten_1983_Self_JoCP} cannot recover the third-order accuracy when it is directly applied to modify the eigenvalues here.
Further investigation is needed in this direction.

\begin{figure}[htbp!]
	\centering
	\begin{subfigure}[b]{0.5\textwidth}
		\centering
		\includegraphics[width=1.0\linewidth]{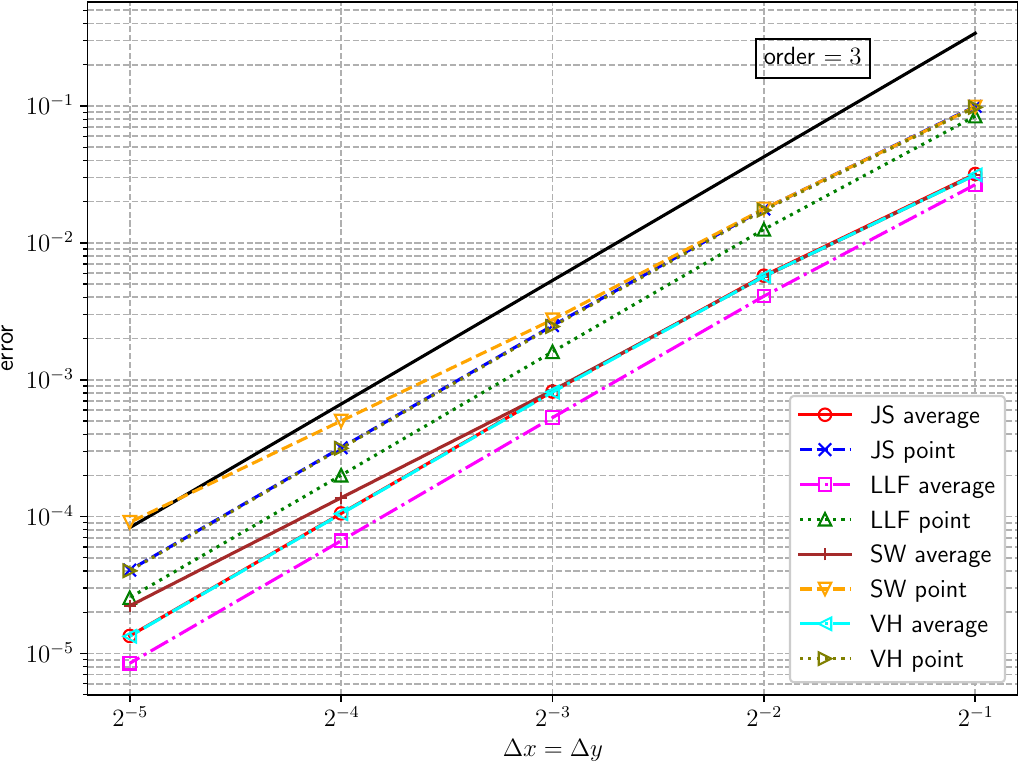}
	\end{subfigure}
	\caption{\Cref{ex:2d_vortex}, 2D isentropic vortex problem.
		The errors of the conservative variables in the $\ell^1$ norm.
	}
	\label{fig:2d_vortex_accuracy}
\end{figure}
\end{example}

\begin{example}[Quasi-2D Sod shock tube]\label{ex:2d_sod}\rm
	In this test, the Sod shock tube problem along the $x$-direction is solved on the domain $[0,1]\times[0,1]$ with a $100\times 2$ uniform mesh until $T=0.2$.
	The initial condition is
	\begin{equation*}
		(\rho, v_1, v_2, p) = \begin{cases}
			(1, ~0, ~0, ~1), &\text{if}~ x < 0.5,\\
			(0.125, ~0, ~0, ~0.1), &\text{otherwise}.\\
		\end{cases}
	\end{equation*}
	
	The density plots obtained by using different ways for the point value update without ($\kappa=0$) and with the shock sensor ($\kappa=1$) are shown in \Cref{fig:2d_sod_density}.
	The JS-based AF method gives solutions with large errors between the contact discontinuity and shock wave, which are not reduced by the limiting based on the shock sensor, known as the mesh alignment issue.
	The results of all the FVS-based methods agree well with the exact solution when the limiting is activated, thus the FVS-based AF methods are more advantageous in simulations since they can cure both the transonic issue \cite{Duan_2024_Active_SJoSC} and the mesh alignment issue.
	To save space, the following tests are all computed based on the LLF FVS, which gives slightly better results among all the three FVS.
		
	\begin{figure}[htbp!]
		\centering		
		\begin{subfigure}[b]{0.24\textwidth}
			\includegraphics[width=\linewidth]{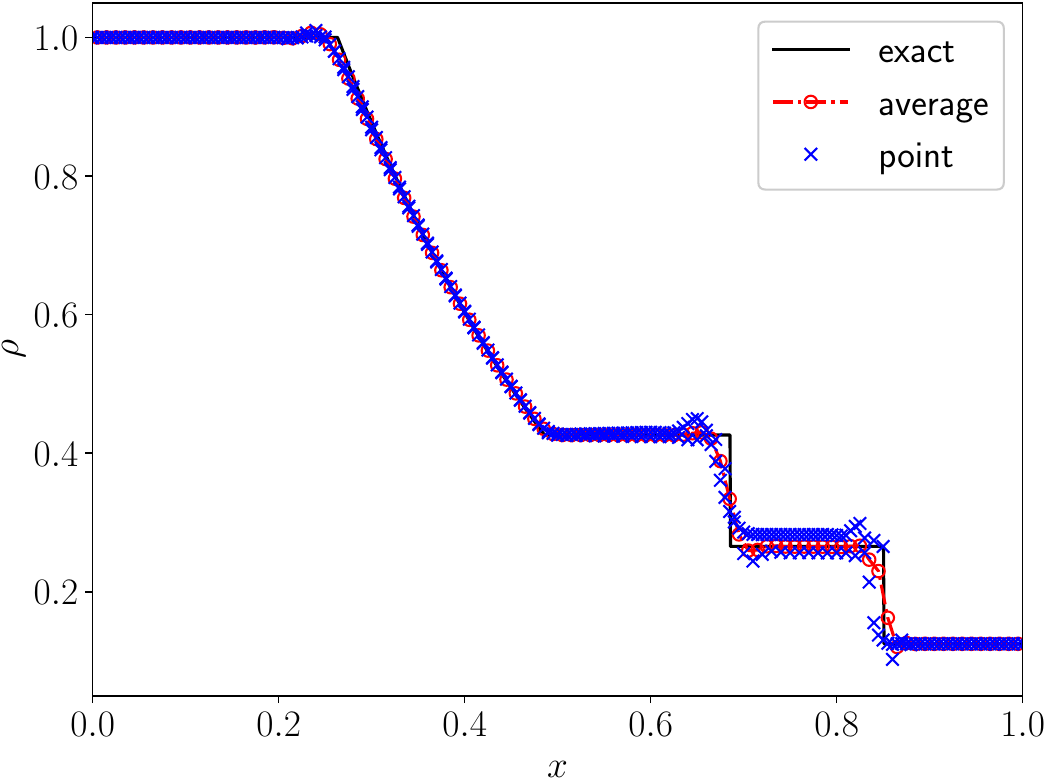}
		\end{subfigure}
		\begin{subfigure}[b]{0.24\textwidth}
			\centering
			\includegraphics[width=\linewidth]{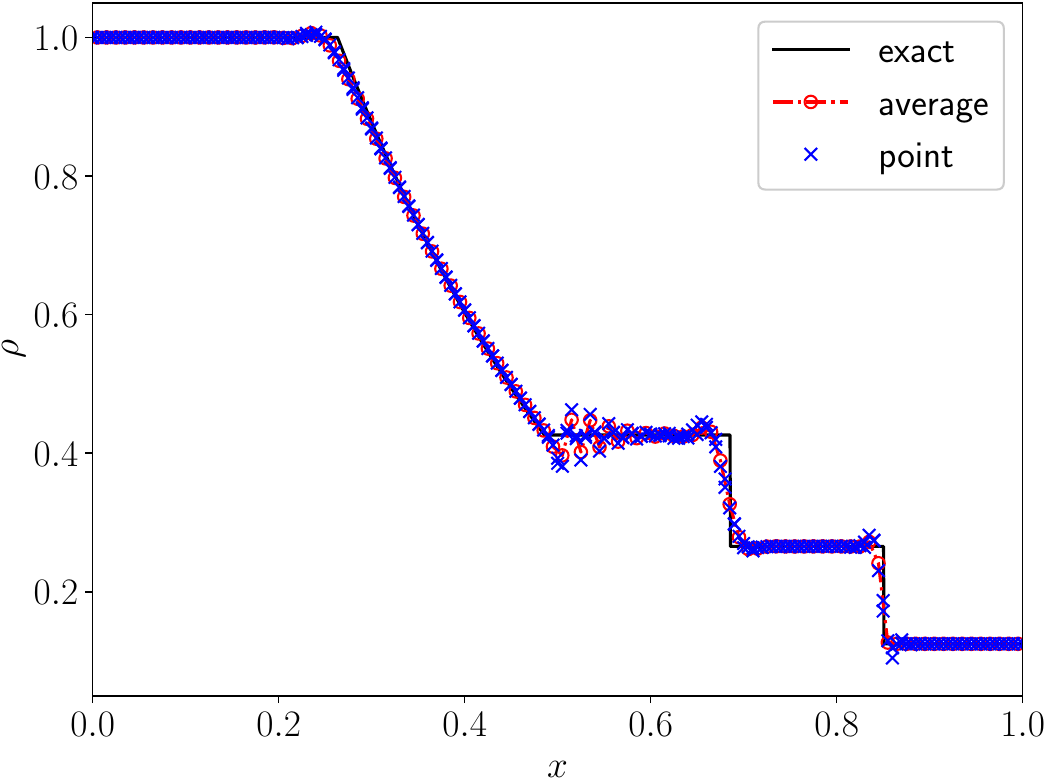}
		\end{subfigure}
		\begin{subfigure}[b]{0.24\textwidth}
			\centering
			\includegraphics[width=\linewidth]{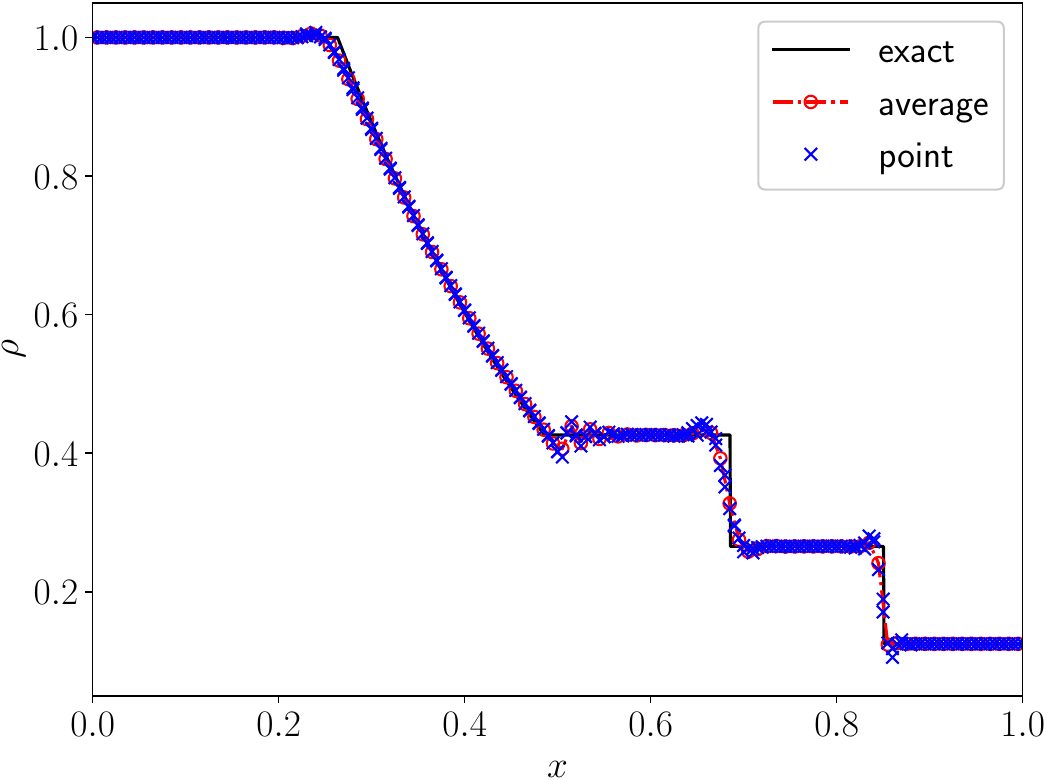}
		\end{subfigure}
		\begin{subfigure}[b]{0.24\textwidth}
			\centering
			\includegraphics[width=\linewidth]{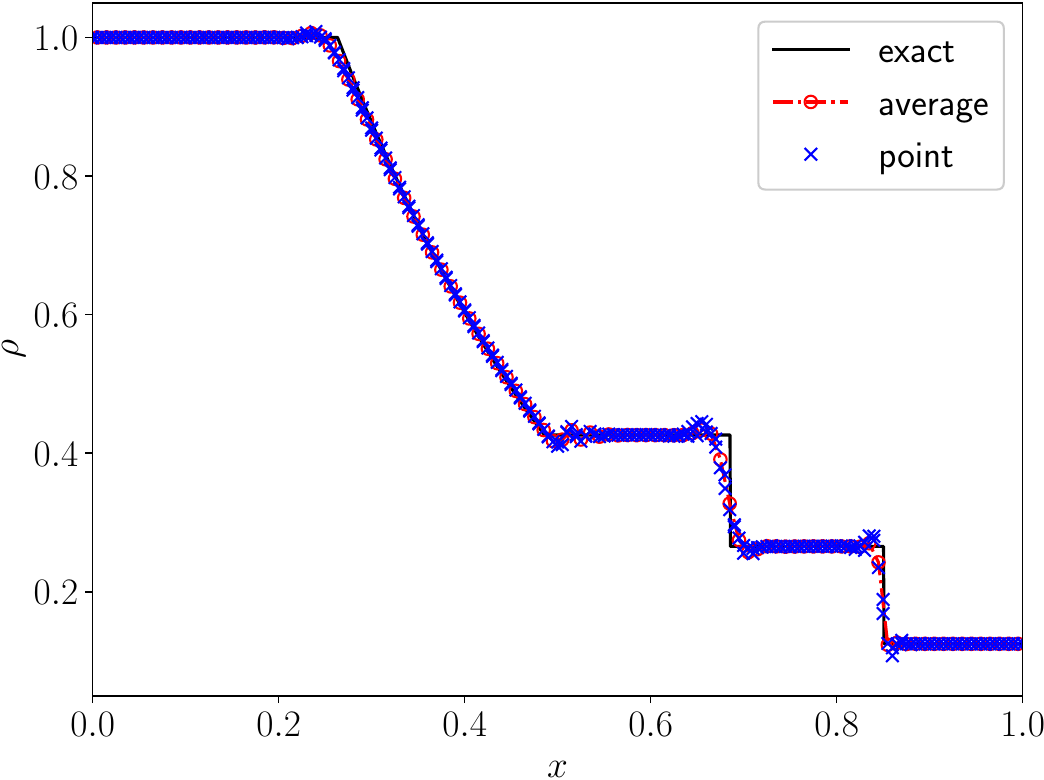}
		\end{subfigure}
		\vspace{5pt}
		
		\begin{subfigure}[b]{0.24\textwidth}
			\includegraphics[width=\linewidth]{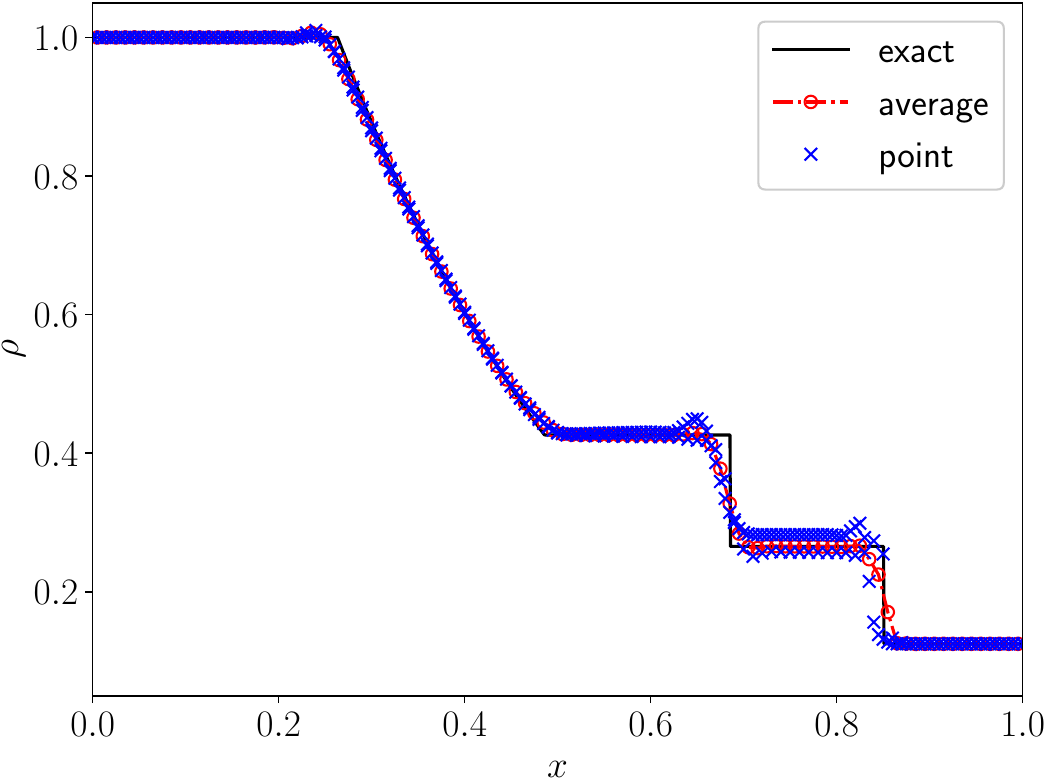}
		\end{subfigure}
		\begin{subfigure}[b]{0.24\textwidth}
			\centering
			\includegraphics[width=\linewidth]{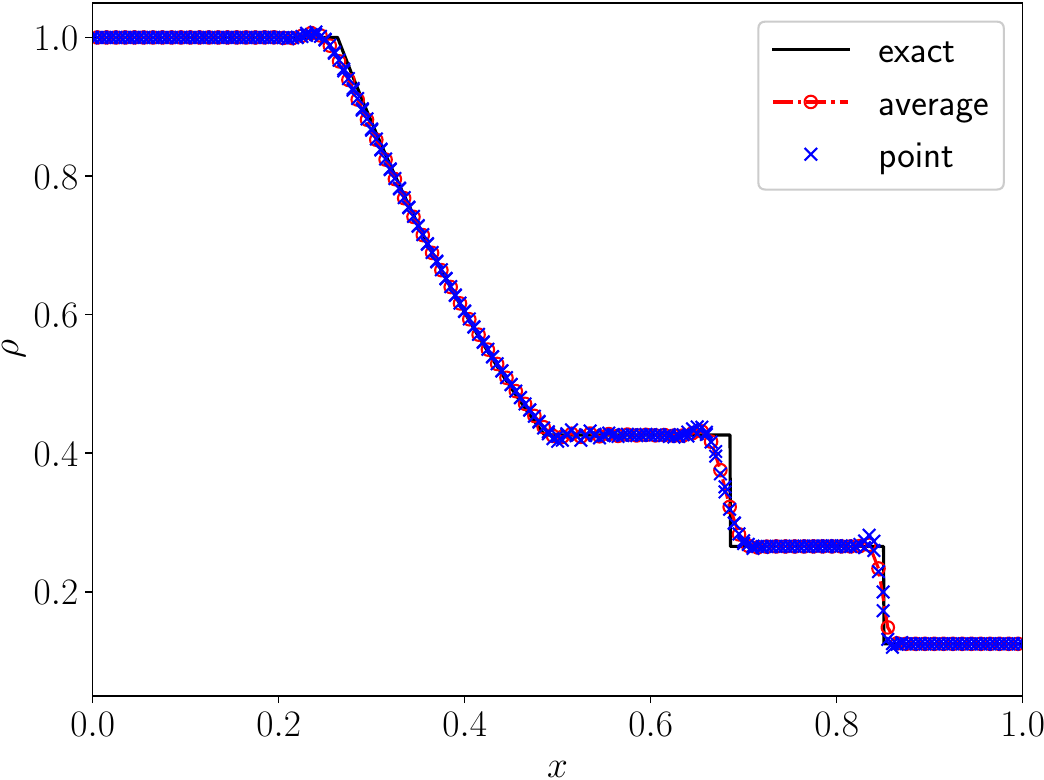}
		\end{subfigure}
		\begin{subfigure}[b]{0.24\textwidth}
			\centering
			\includegraphics[width=\linewidth]{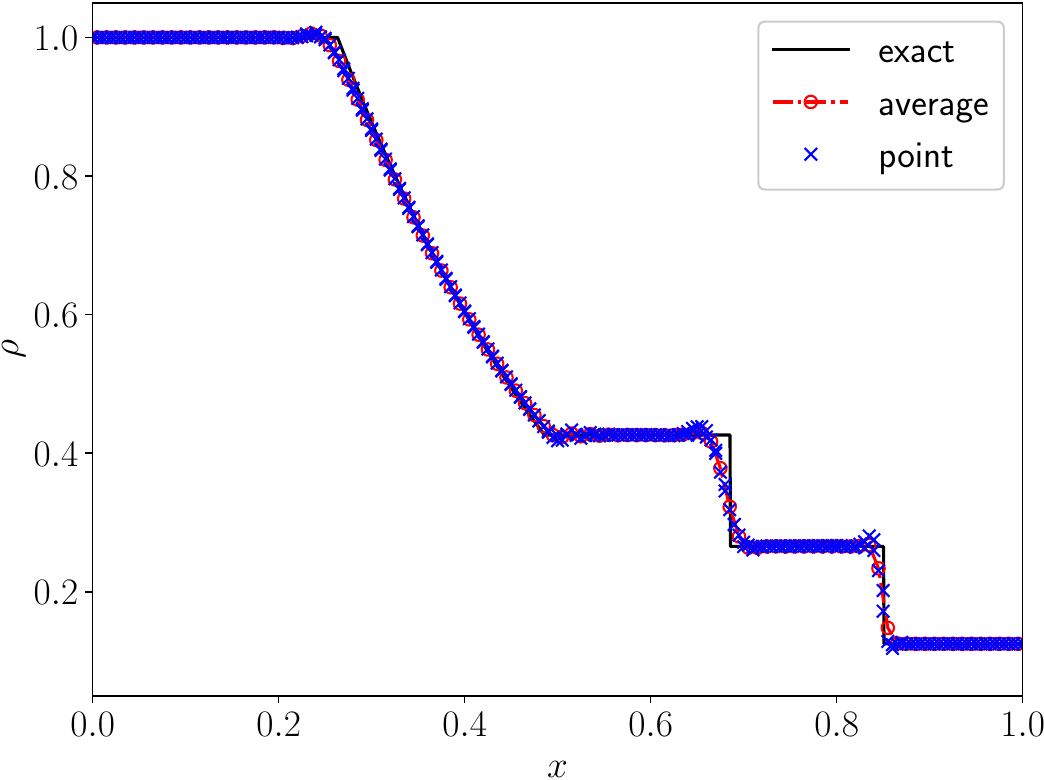}
		\end{subfigure}
		\begin{subfigure}[b]{0.24\textwidth}
			\centering
			\includegraphics[width=\linewidth]{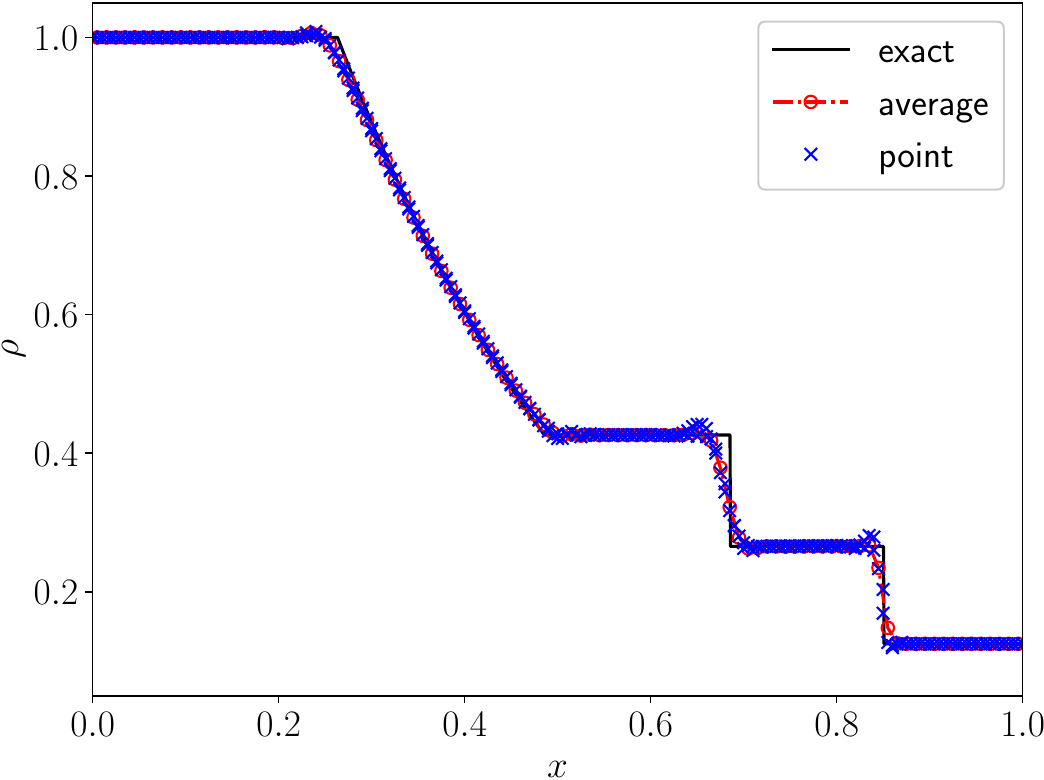}
		\end{subfigure}
		\caption{\Cref{ex:2d_sod}, quasi-2D Sod shock tube.
			The density obtained by using different point value updates without ($\kappa=0$, top row) or with the shock sensor ($\kappa=1$, bottom row).
			From left to tight: JS, LLF, SW, VH.}
		\label{fig:2d_sod_density}
	\end{figure}
\end{example}

\begin{example}[Shock reflection problem]\label{ex:2d_shock_reflection}\rm
	The computational domain is $[0,4]\times[0,1]$, which is divided into a $120\times30$ uniform mesh. The boundary conditions are outflow at the right boundary, reflective at the bottom boundary, and inflow on the other two sides with the data
	\begin{equation*}
		(\rho, v_1, v_2, p) = 
		\begin{cases}
			(1.0, ~2.9, ~0.0, ~1.0/1.4), &\text{if}~ x=0,~ 0\leqslant y\leqslant 1,\\
			(1.69997, ~2.61934, -0.50632, ~1.52819), &\text{if}~ y=1,~ 0\leqslant x\leqslant 4. \\
		\end{cases}
	\end{equation*}
	This test is solved until $T=6$ thus the numerical solution converges.
	
	The density plots obtained without any limiting ($\kappa=0$) and with the shock sensor-based limiting ($\kappa=0.5$) are shown in \Cref{fig:2d_sf_density}, and the blending coefficients based on the shock sensor are plotted in \Cref{fig:2d_sf_ss}.
	The numerical solutions converge in both cases, and the shock sensor can correctly locate the shock waves.
	It is also interesting to look at the residual between two successive time steps, presented in \Cref{fig:2d_sf_residual}, with respective to the number of iterations.
	The limiting based on the shock sensor accelerates the convergence after the reflective shock is fully formed, showing the advantage of using the shock sensor.

	\begin{figure}[htbp!]
		\centering
		\begin{subfigure}[b]{0.48\textwidth}
			\centering
			\includegraphics[width=\linewidth]{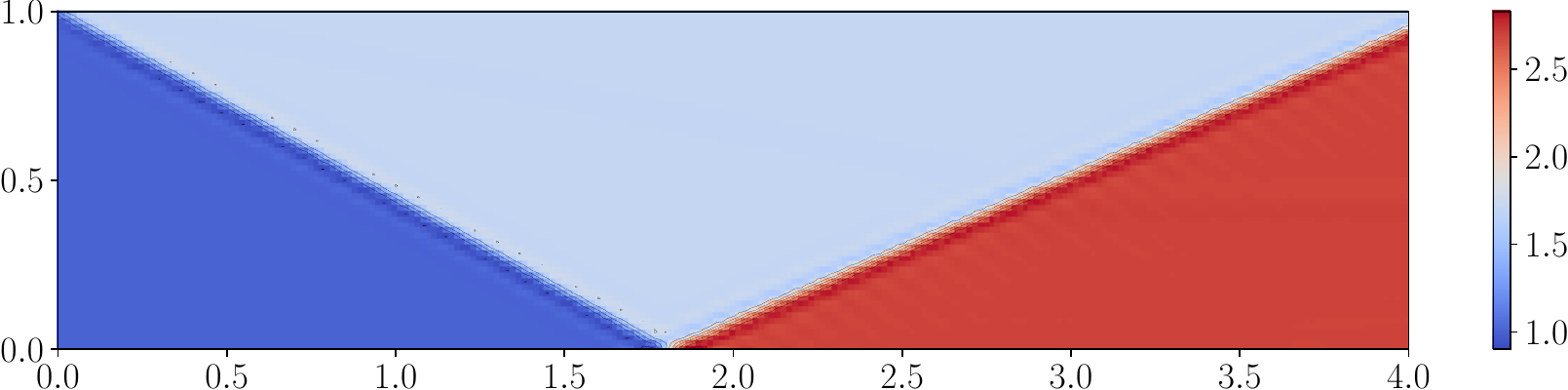}
		\end{subfigure}
		\quad
		\begin{subfigure}[b]{0.48\textwidth}
			\centering
			\includegraphics[width=\linewidth]{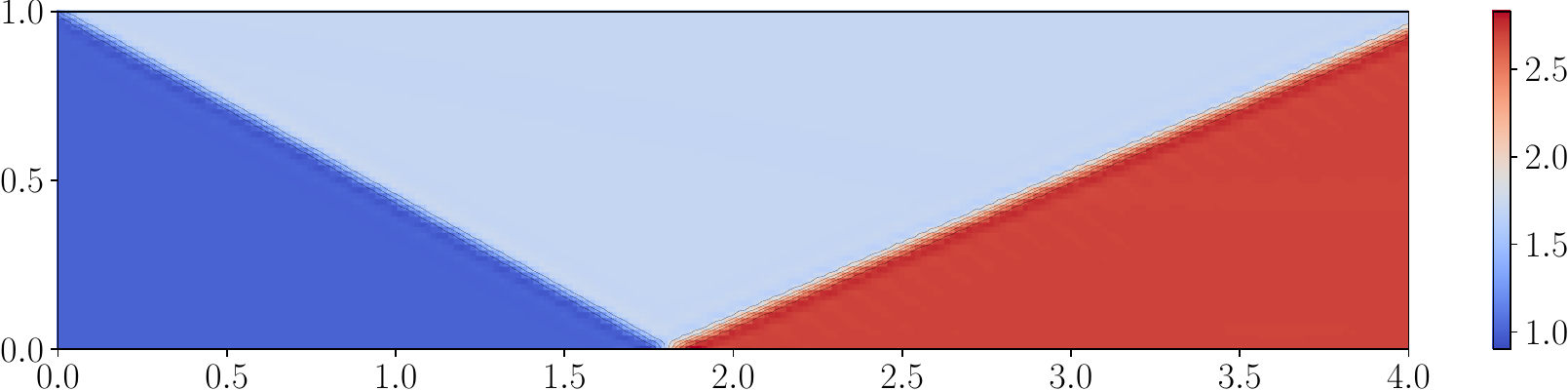}
		\end{subfigure}
		\caption{\Cref{ex:2d_shock_reflection}, shock reflection problem.
			The density obtained without ($\kappa=0$, left) or with the shock sensor ($\kappa=0.5$, right) on the $120\times30$ uniform mesh. $10$ equally spaced contour lines from $0.901$ to $2.829$ are shown.}
		\label{fig:2d_sf_density}
	\end{figure}
	
	\begin{figure}[htbp!]
		\centering
		\begin{subfigure}[b]{0.48\textwidth}
			\includegraphics[width=\linewidth]{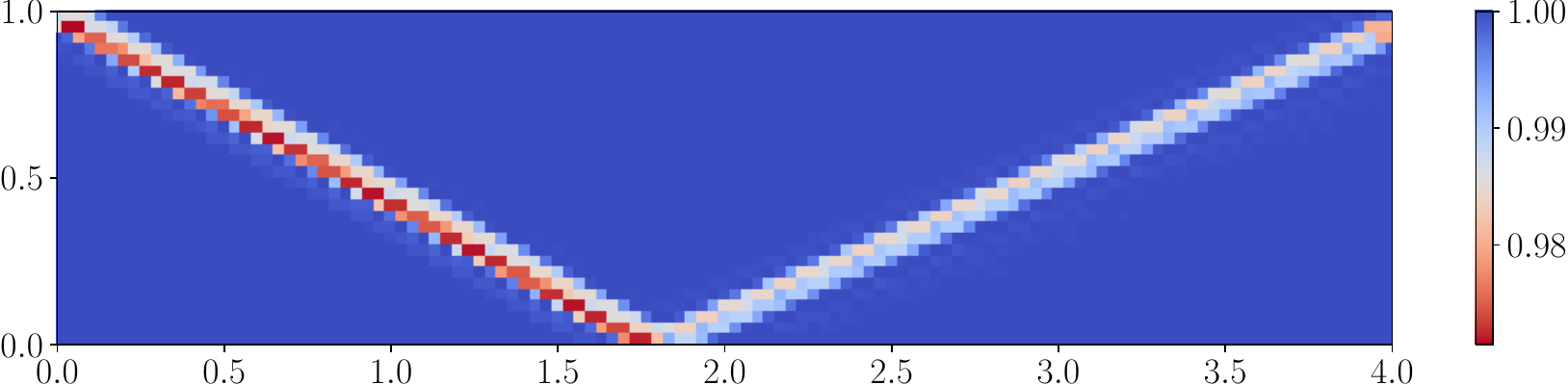}
		\end{subfigure}
		\quad
		\begin{subfigure}[b]{0.48\textwidth}
			\centering
			\includegraphics[width=\linewidth]{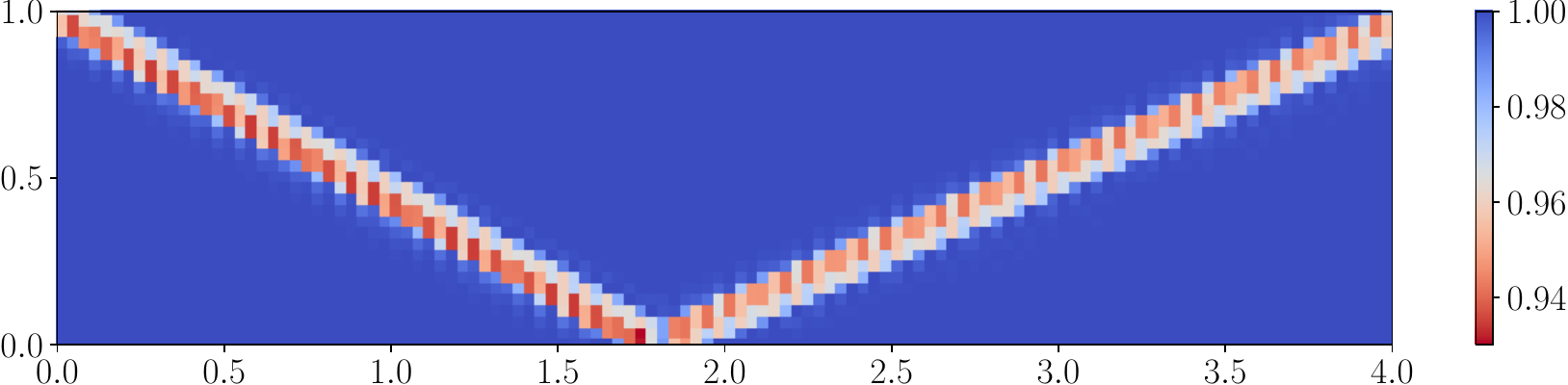}
		\end{subfigure}
		\caption{\Cref{ex:2d_shock_reflection}, shock reflection problem.
			The shock sensor-based blending coefficients $\theta_{\xr,j}^{s}$ (left) and $\theta_{i,\yr}^{s}$ (right) on the $120\times30$ uniform mesh. $\kappa=0.5$.}
		\label{fig:2d_sf_ss}
	\end{figure}
	
	\begin{figure}[htbp!]
		\centering
		\includegraphics[width=0.4\linewidth]{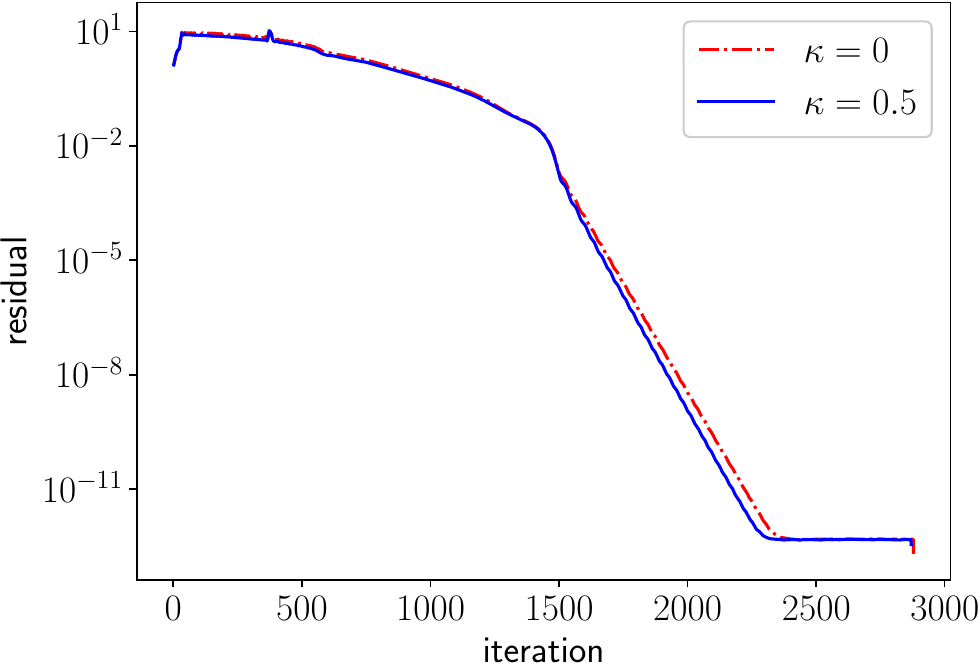}
		\caption{\Cref{ex:2d_shock_reflection}, shock reflection problem.
			The residual decay with respect to the number of iterations.}
		\label{fig:2d_sf_residual}
	\end{figure}
	
\end{example}

\begin{example}[Sedov blast wave]\label{ex:2d_sedov}\rm
	In this problem, a volume of uniform density and temperature is initialized, and a large quantity of thermal energy is injected at the center, developing into a blast wave that evolves in time in a self-similar fashion \cite{Sedov_1959_Similarity_book}.
	An exact analytical solution based on self-similarity arguments is available \cite{Kamm_2007_efficient}, which contains very low
	density with strong shocks.
	The computational domain is $[-1.1,1.1]\times[-1.1,1.1]$ with outflow boundary conditions.
	The initial density is one, velocity is zero, and total energy is $10^{-12}$ everywhere except that in the centered cell, the total energy of the cell average is $\frac{0.979264}{\Delta x\Delta y}$ with $\Delta x = 2.2/N_1, \Delta y = 2.2/N_2$, which is used to emulate a $\delta$-function at the center.
	
	This test is solved until $T=1$ with $101\times101$ and $201\times201$ uniform meshes, and the BP limitings are used, otherwise, the simulation fails due to negative pressure.
	The density plots with the shock sensor ($\kappa=0.5$) are shown in \Cref{fig:2d_sedov_density}, from which one can observe the well-captured circular shock wave.
        The cut-line along the diagonal of the domain $y=x$ in \Cref{fig:2d_sedov_density} demonstrates that the numerical solutions converge to the exact solution without spurious oscillations.
	The blending coefficients based on the shock sensor are presented in \Cref{fig:2d_sedov_ss}, verifying that this limiting is only activated at the shock wave.
	
\begin{figure}[htbp!]
	\centering
	\begin{subfigure}[b]{0.3\textwidth}
		\centering
		\includegraphics[width=\linewidth]{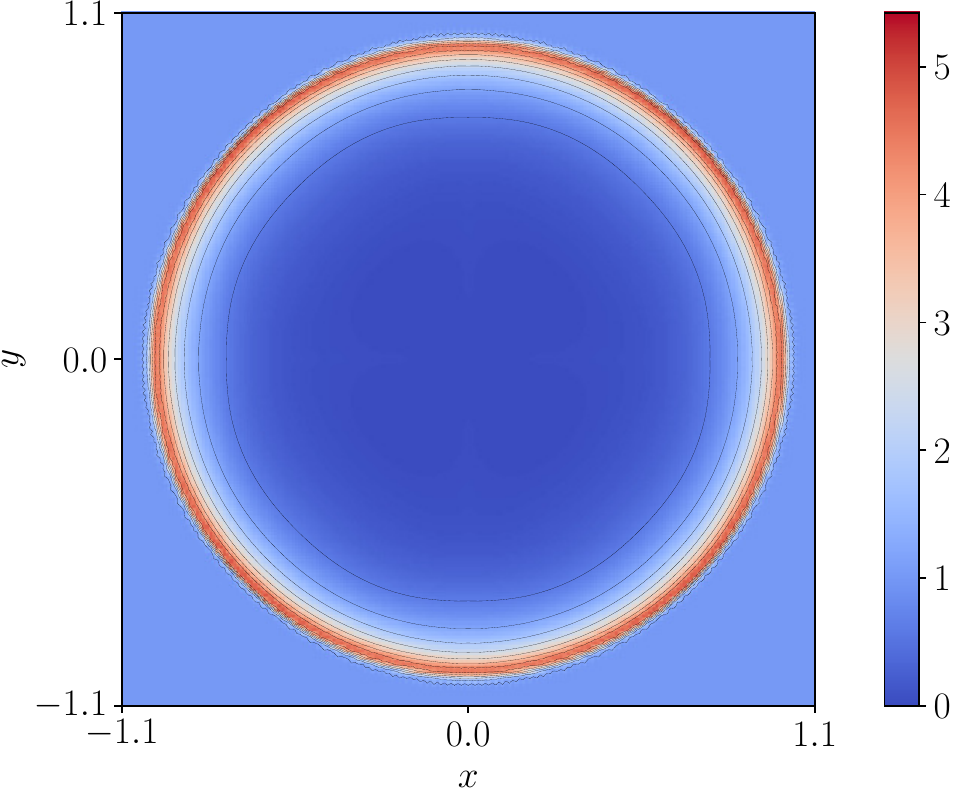}
	\end{subfigure}
	\begin{subfigure}[b]{0.3\textwidth}
		\centering
		\includegraphics[width=\linewidth]{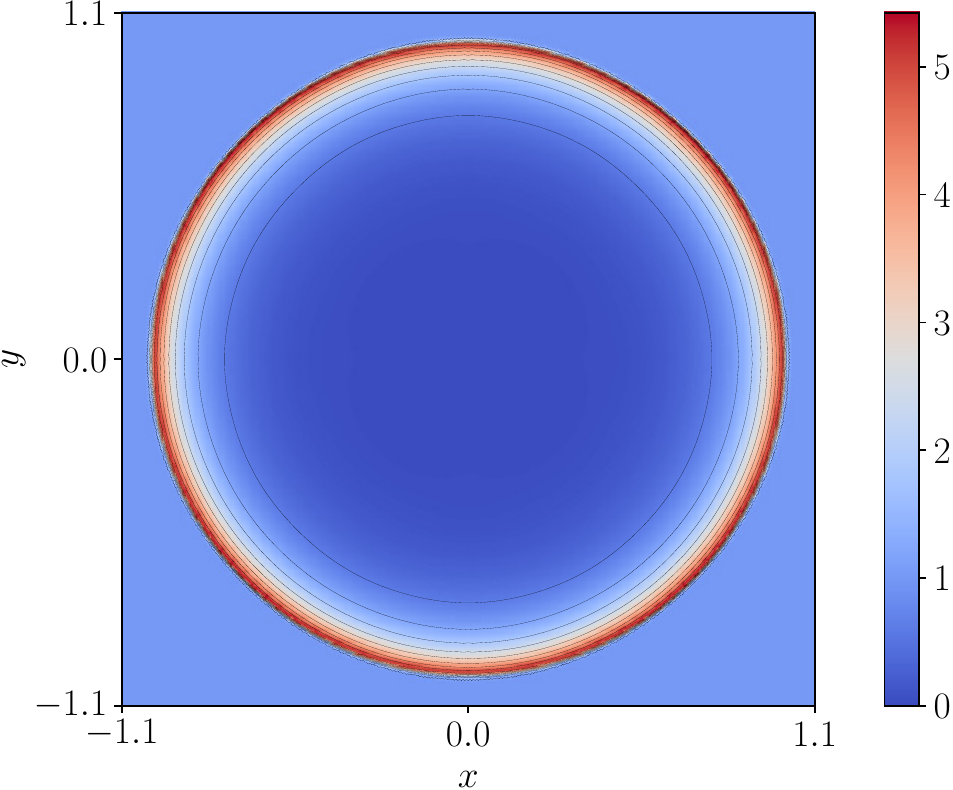}
	\end{subfigure}
        \begin{subfigure}[b]{0.3\textwidth}
		\centering
		\includegraphics[width=0.81\linewidth]{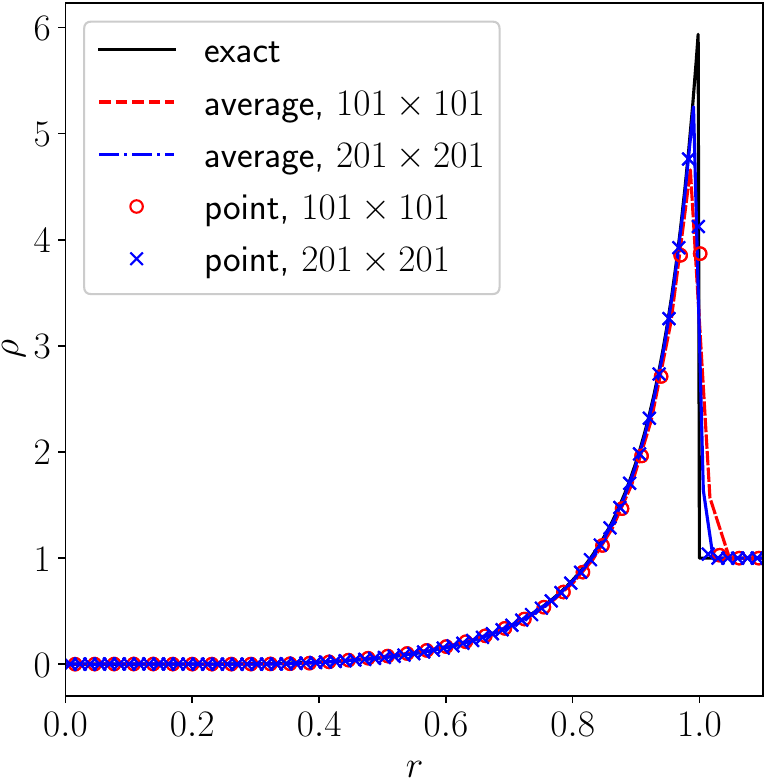}
	\end{subfigure}
	\caption{\Cref{ex:2d_sedov}, 2D Sedov blast wave.
		The density plots obtained with the BP limitings and the shock sensor ($\kappa=0.5$).
		From left to right:	$10$ equally spaced contour lines from $0$ to $5.424$ on the uniform $101\times101$ and $201\times201$ meshes, respectively, cut-line along $y=x$. }
	\label{fig:2d_sedov_density}
\end{figure}

\begin{figure}[htbp!]
	\centering
	\begin{subfigure}[b]{0.3\textwidth}
		\centering
		\includegraphics[width=\linewidth]{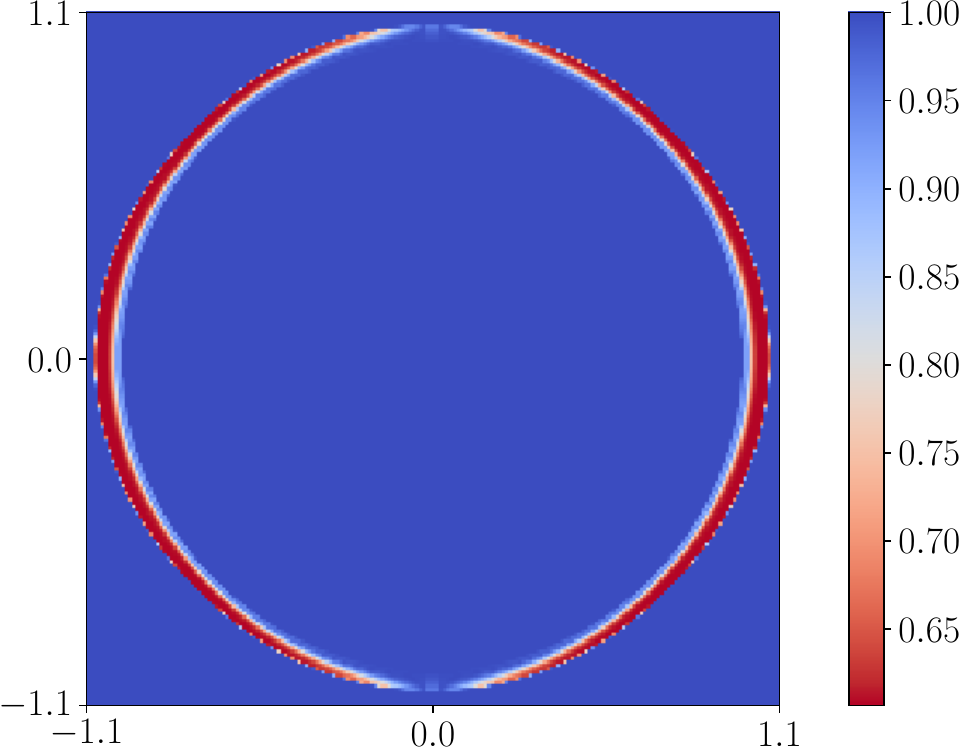}
	\end{subfigure}
	\qquad\qquad\qquad
	\begin{subfigure}[b]{0.3\textwidth}
		\centering
		\includegraphics[width=\linewidth]{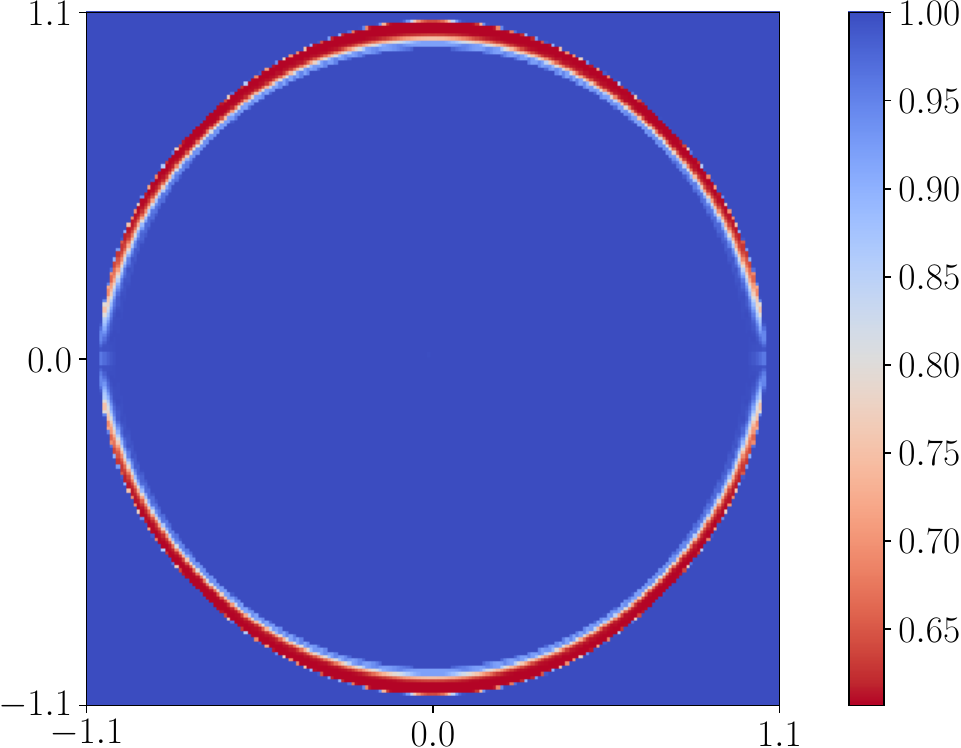}
	\end{subfigure}
	\caption{\Cref{ex:2d_sedov}, 2D Sedov blast wave.
		The shock sensor-based blending coefficients $\theta_{\xr,j}^{s}$ (left) and $\theta_{i,\yr}^{s}$ (right) on the $201\times201$ uniform mesh.}
	\label{fig:2d_sedov_ss}
\end{figure}

\end{example}

\begin{example}[2D Riemann problem]\label{ex:2d_rp}\rm
	This problem corresponds to the configuration $3$ in \cite{Lax_1998_Solution_SJoSC}, containing four initial shock waves, with the initial data
	\begin{equation*}
		(\rho, v_1, v_2, p) = 
		\begin{cases}
			(1.5, ~0, ~0, ~1.5), & x > 0.8, ~y > 0.8, \\
			(0.5323, ~1.206, ~0, ~0.3), & x < 0.8, ~y > 0.8, \\
			(0.138, ~1.206, ~1.206, ~0.029), & x < 0.8, ~y < 0.8, \\
			(0.5323, ~0, ~1.206, ~0.3), & x > 0.8, ~y < 0.8. \\
		\end{cases}
	\end{equation*}
	The test is solved on the domain $[0,1]\times[0,1]$ until $T=0.8$.
	
	Without the BP limitings, the simulation crashes due to negative pressure.
	The density plots obtained without ($\kappa=0$) and with the shock sensor ($\kappa=0.5$) are shown in \Cref{fig:2d_rp_density}.
	Without the shock sensor, the numerical solutions contain spurious oscillations, and they are reduced drastically by the shock sensor-based limiting.
	As mesh refinement, the shock waves are captured sharply, and the small-scale features are preserved well, as evidenced by the roll-ups around the mushroom-shaped jet, which are in good agreement with the results in the literature.
	The values of the shock sensor-based blending coefficients $\theta_{\xr,j}, \theta_{i,\yr}$ are also plotted in \Cref{fig:2d_rp_ss}, which indicates that the shock sensor can locate the shock waves correctly.
	
	\begin{figure}[htbp!]
		\begin{subfigure}[b]{0.3\textwidth}
			\centering
			\includegraphics[width=\linewidth]{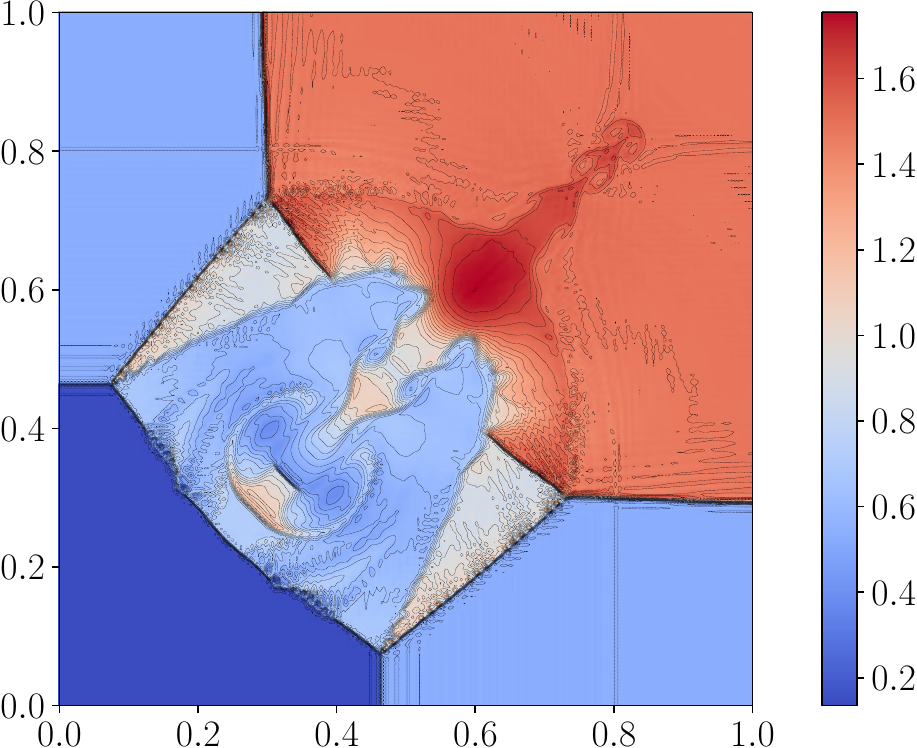}
		\end{subfigure}
		~
		\begin{subfigure}[b]{0.3\textwidth}
			\centering
			\includegraphics[width=\linewidth]{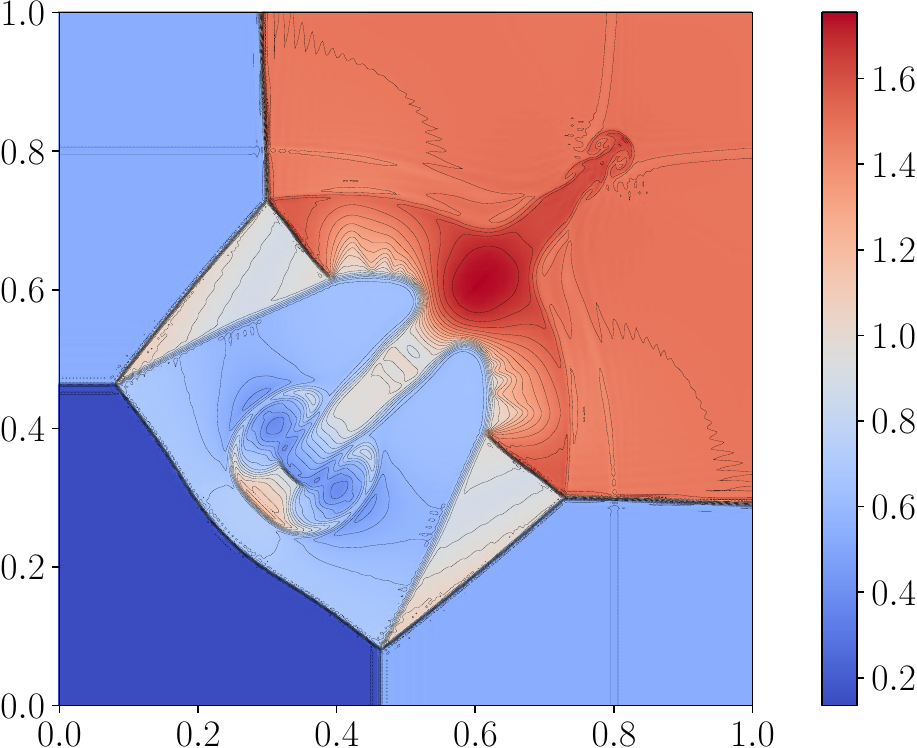}
		\end{subfigure}
		~
		\begin{subfigure}[b]{0.3\textwidth}
			\centering
			\includegraphics[width=\linewidth]{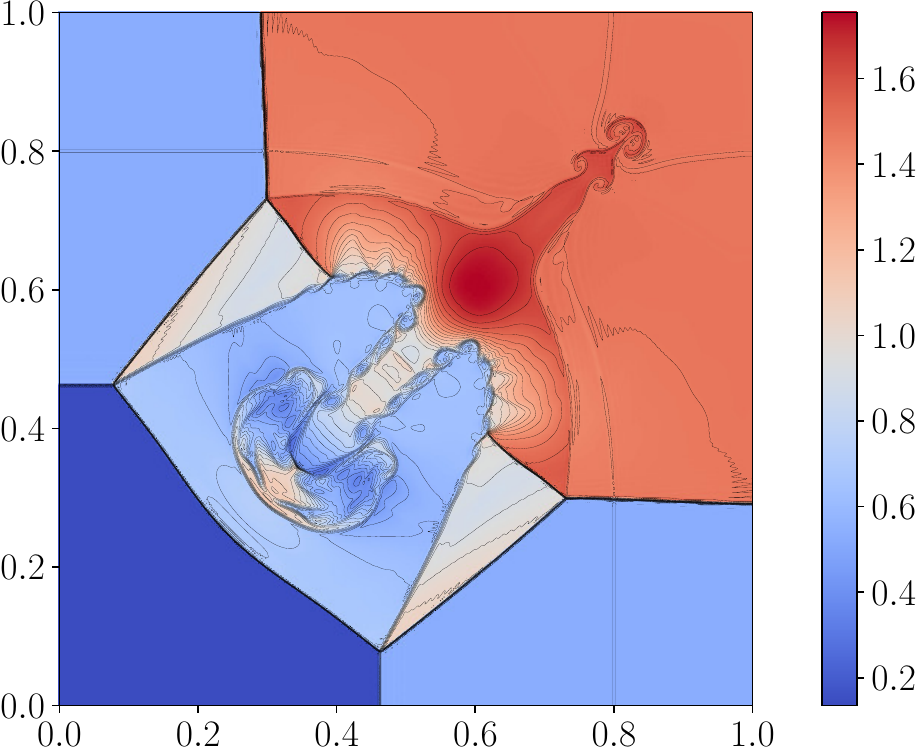}
		\end{subfigure}
		\caption{\Cref{ex:2d_rp}, 2D Riemann problem.
			The density obtained with the BP limitings and without or with the shock sensor.
			From left to right: $200\times200$ mesh with $\kappa=0$, $200\times200$ mesh with $\kappa=0.5$, $400\times400$ mesh with $\kappa=0.5$.
			$30$ equally spaced contour lines from $0.135$ to $1.754$. }
		\label{fig:2d_rp_density}
	\end{figure}
	
	\begin{figure}[htbp!]
		\centering
		\begin{subfigure}[b]{0.3\textwidth}
			\centering
			\includegraphics[width=\linewidth]{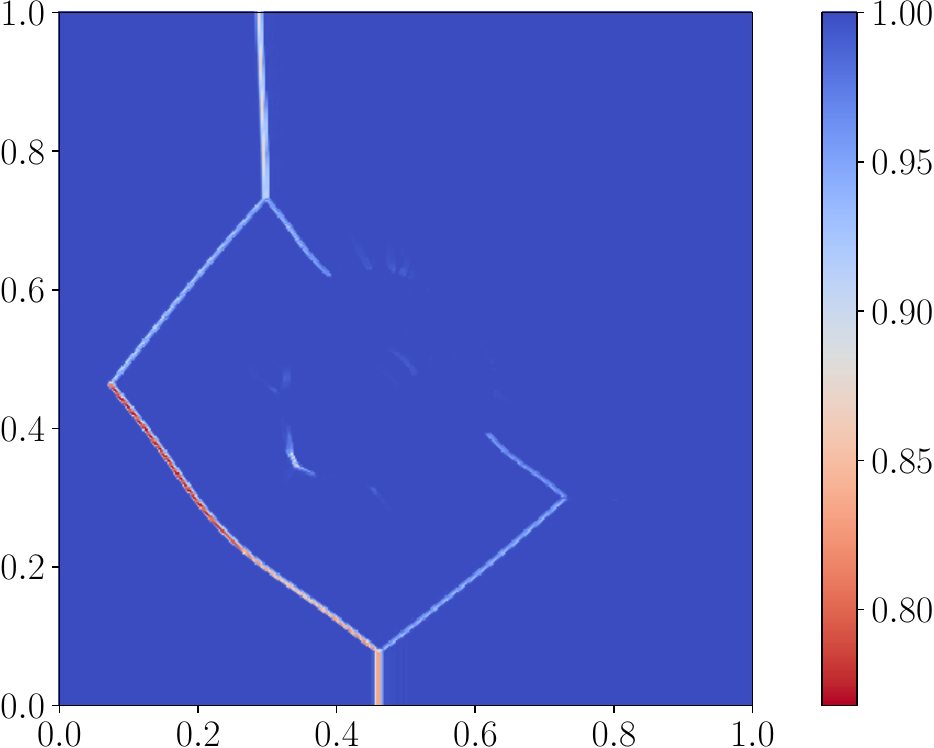}
		\end{subfigure}
		\qquad\qquad\qquad
		\begin{subfigure}[b]{0.3\textwidth}
			\centering
			\includegraphics[width=\linewidth]{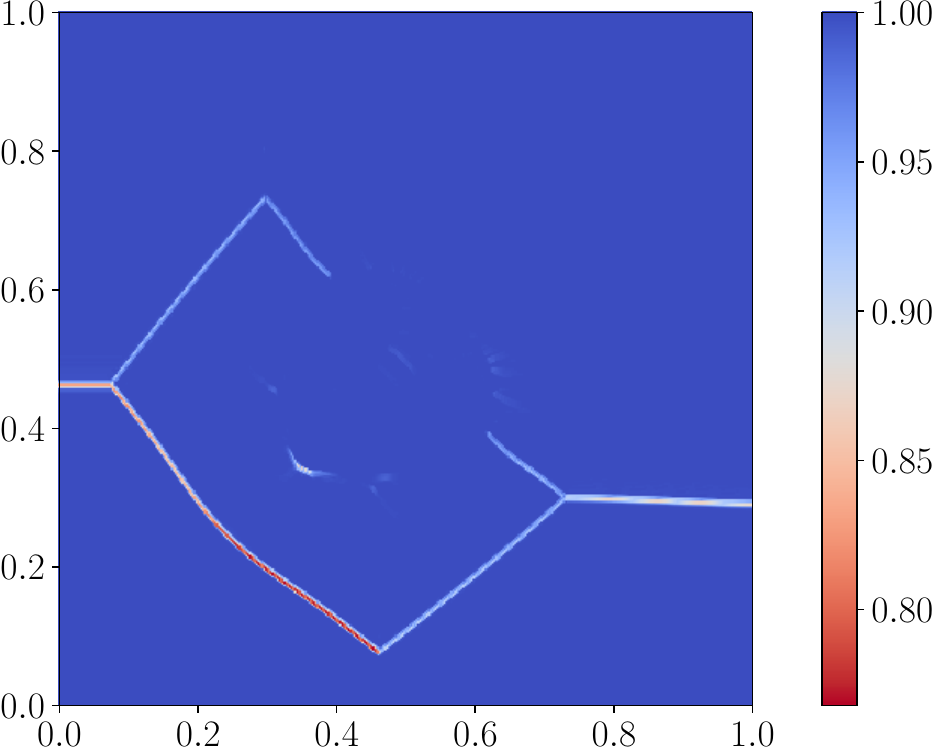}
		\end{subfigure}
		\caption{\Cref{ex:2d_rp}, 2D Riemann problem.
			The shock sensor-based blending coefficients $\theta_{\xr,j}^{s}$ (left) and $\theta_{i,\yr}^{s}$ (right) on the $400\times400$ uniform mesh.}
		\label{fig:2d_rp_ss}
	\end{figure}

\end{example}

\begin{example}[Double Mach reflection]\label{ex:2d_dmr}\rm
	The computational domain is $[0, 3]\times[0, 1]$ with a reflective wall at the bottom starting from $x = 1/6$.
	A Mach $10$ shock is moving towards the bottom wall with an angle of $\pi/6$.
	The pre- and post-shock states are
	\begin{equation*}
		(\rho, v_1, v_2, p) = 
		\begin{cases}
			(1.4, ~0, ~0, ~1), & x \geqslant 1/6 + (y+20t)/\sqrt{3}, \\
			(8, ~8.25\cos(\pi/6), -8.25\sin(\pi/6), ~116.5), & x < 1/6 + (y+20t)/\sqrt{3}. \\
		\end{cases}
	\end{equation*}
	The reflective boundary condition is applied at the wall, while the exact post-shock condition is imposed at the left boundary and for the rest of the bottom boundary (from $x = 0$ to $x = 1/6$).
	At the top boundary, the exact motion of the Mach $10$ shock is applied and the outflow boundary condition is used at the right boundary.
	The results are shown at $T = 0.2$.
	
	The AF method without the BP limitings gives negative density or pressure near the reflective location $(1/6, 0)$, so the BP limitings are necessary for this test.
	The density plots with enlarged views around the double Mach region are shown in \Cref{fig:2d_dmr_density} without ($\kappa=0$) and with the shock sensor ($\kappa=1$) on a series of uniform meshes, and the blending coefficients based on the shock sensor are shown in \Cref{fig:2d_dmr_ss}.
	When the shock sensor is not activated, the noise after the bow shock is obvious, and it is damped with the help of the shock sensor.
	As mesh refinement, the numerical solutions converge with a good resolution and are comparable to those in the literature.
    Compared to the third-order $P^2$ DG method using the TVB limiter \cite{Cockburn_2001_Runge_JoSC} with the same mesh resolution ($\Delta x=\Delta y=1/480$), the roll-ups and vortices are comparable while the AF method uses fewer DoFs ($4$ versus $6$ per cell).
	
	\begin{figure}[htbp!]
		\centering
		\begin{subfigure}[b]{0.6\textwidth}
			\centering	\includegraphics[width=\linewidth]{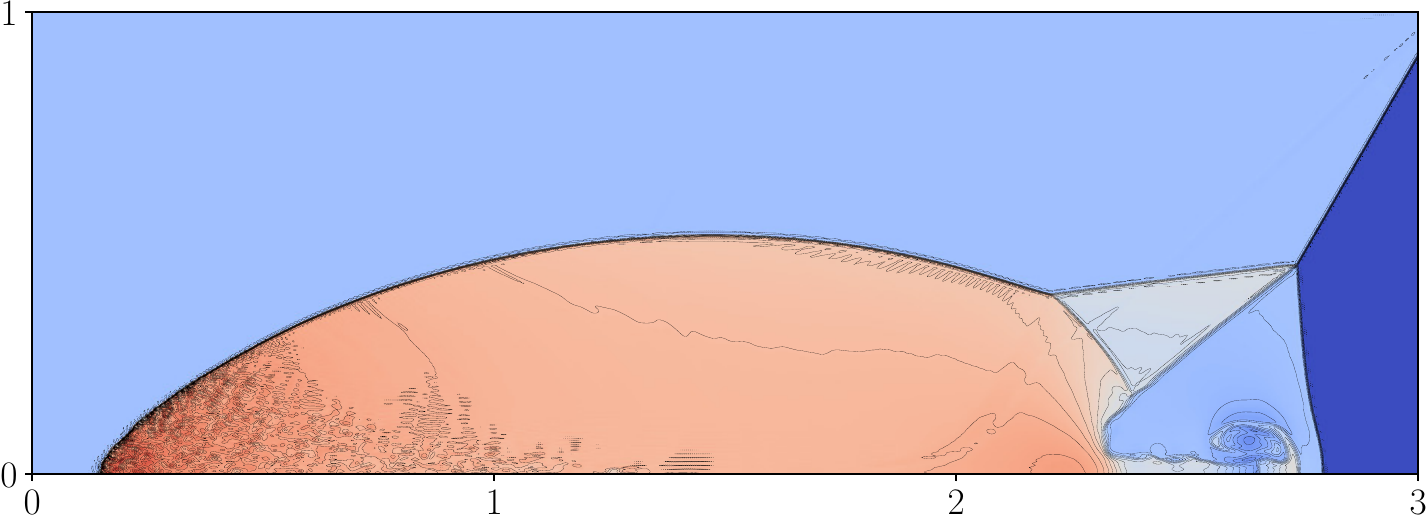}
		\end{subfigure}
  \begin{subfigure}[b]{0.384\textwidth}
			\centering
			\includegraphics[width=\linewidth]{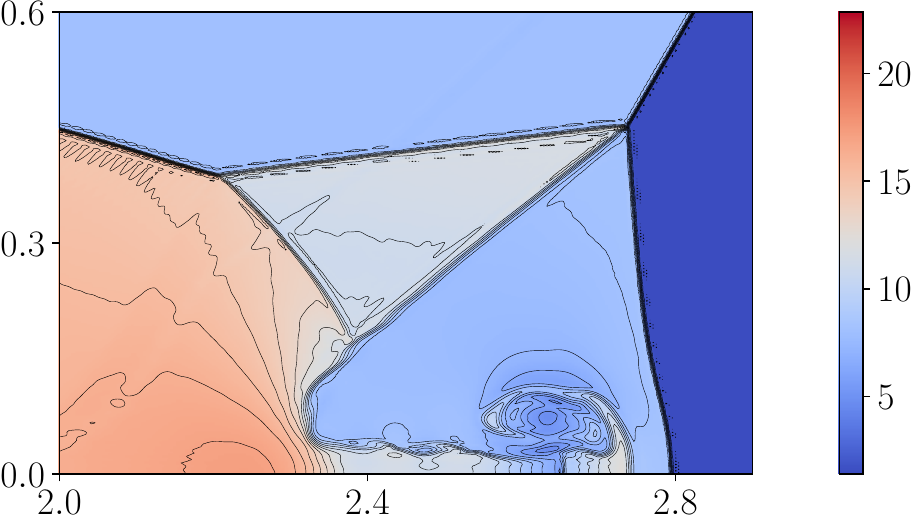}
		\end{subfigure}
		
		\begin{subfigure}[b]{0.6\textwidth}
			\centering
			\includegraphics[width=\linewidth]{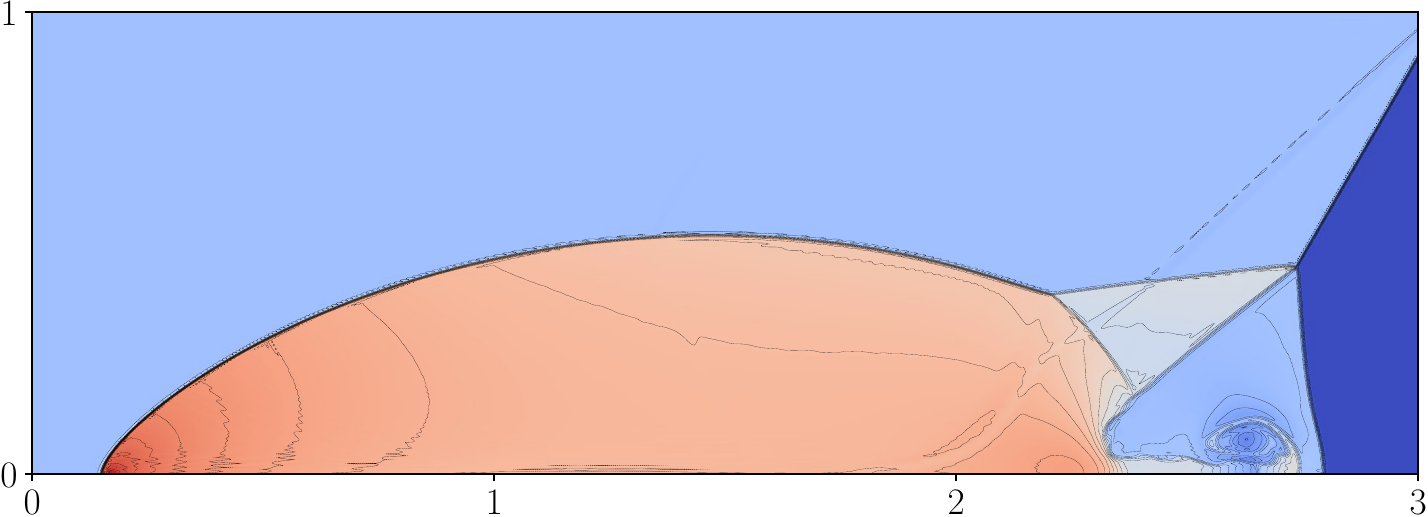}
		\end{subfigure}
  \begin{subfigure}[b]{0.384\textwidth}
			\centering
			\includegraphics[width=\linewidth]{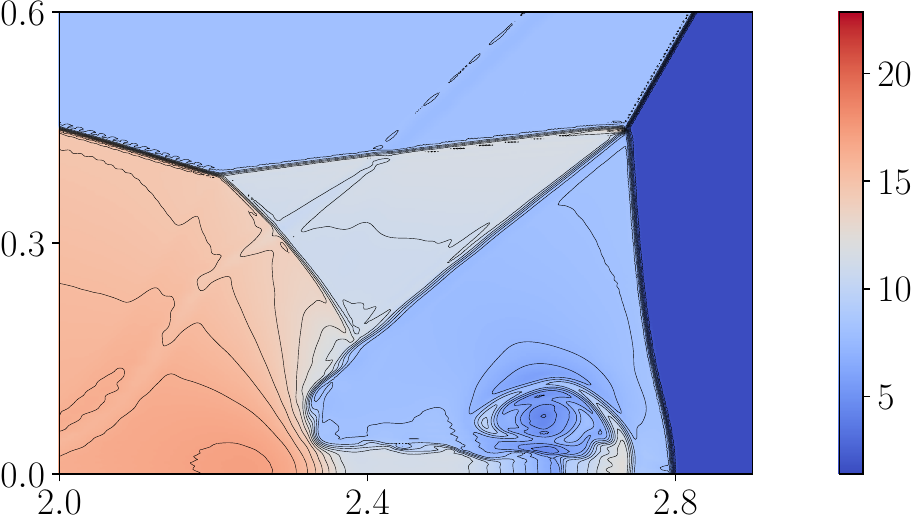}
		\end{subfigure}

      \begin{subfigure}[b]{0.6\textwidth}
			    \includegraphics[width=\linewidth]{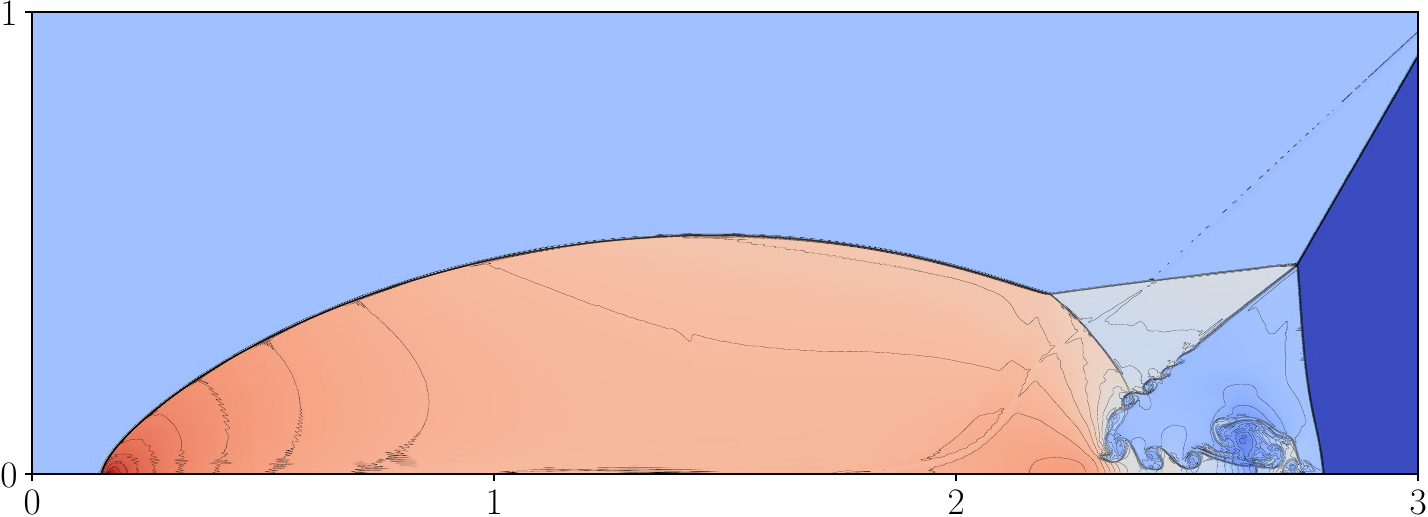}
		\end{subfigure}
  \begin{subfigure}[b]{0.384\textwidth}
			\centering
			\includegraphics[width=\linewidth]{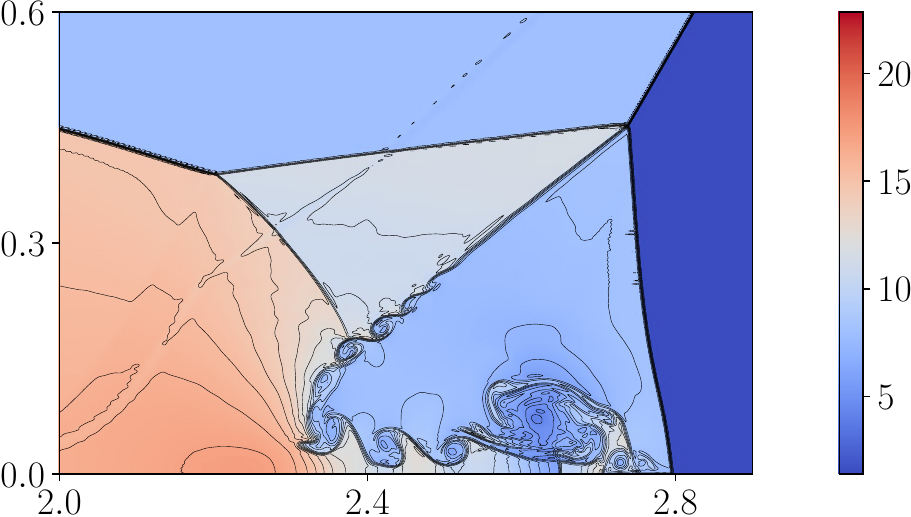}
		\end{subfigure}
      
		\caption{\Cref{ex:2d_dmr}, double Mach reflection.
			The density obtained with the BP limitings and without or with the shock sensor.
			From top to bottom: $720\times240$ mesh with $\kappa=0$, $720\times240$ mesh with $\kappa=1$, $1440\times480$ mesh with $\kappa=1$.
			$30$ equally spaced contour lines from $1.390$ to $22.861$.}
		\label{fig:2d_dmr_density}
	\end{figure}

 	\begin{figure}[htbp!]
		\centering		
		\begin{subfigure}[b]{0.48\textwidth}
			\centering
			\includegraphics[width=\linewidth]{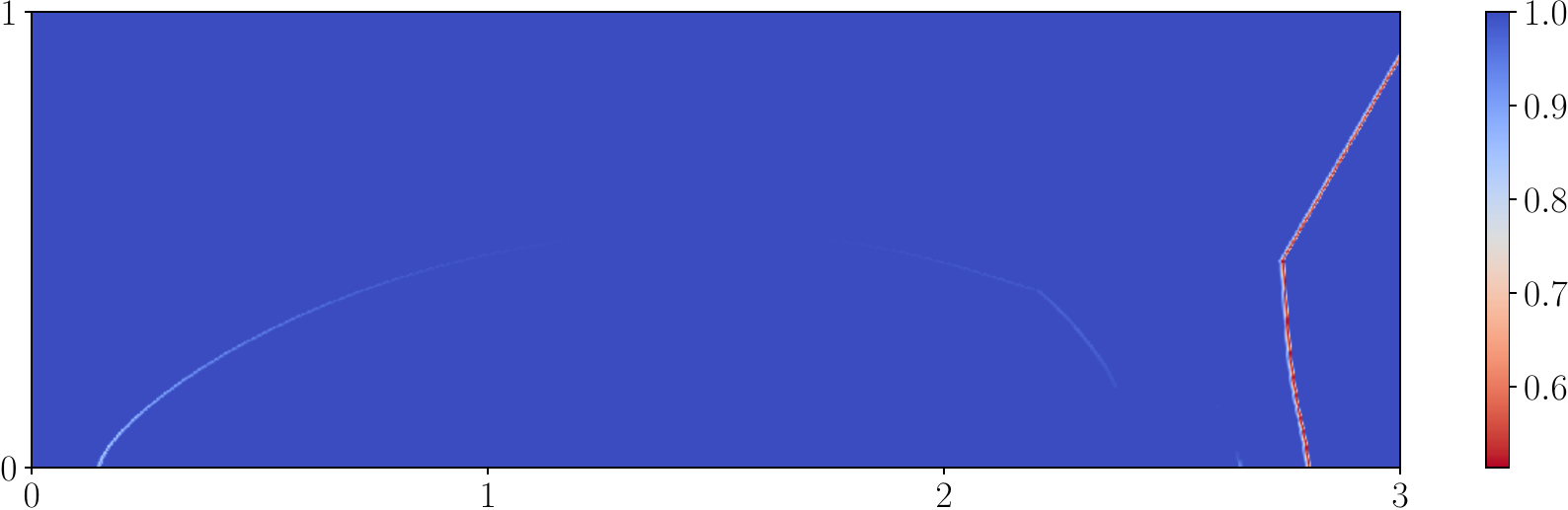}
		\end{subfigure}
		\quad
		\begin{subfigure}[b]{0.48\textwidth}
			\centering
			\includegraphics[width=\linewidth]{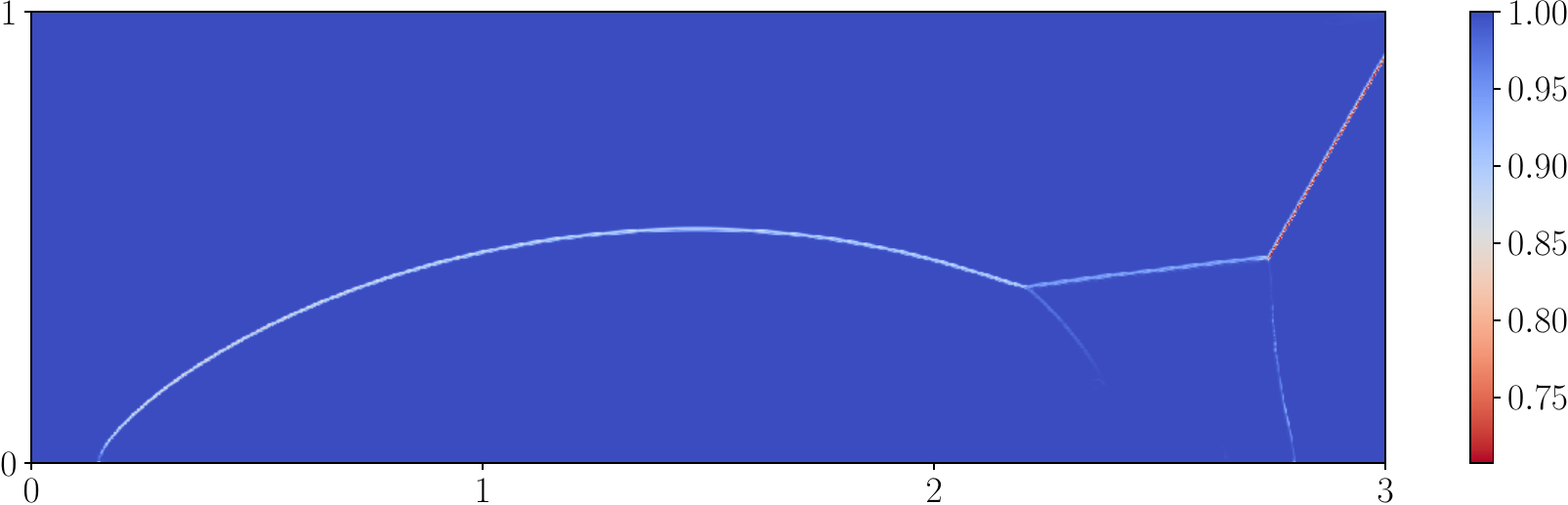}
		\end{subfigure}
		\caption{\Cref{ex:2d_dmr}, double Mach reflection.
			The blending coefficients $\theta_{\xr,j}^{s}$ (left) and $\theta_{i,\yr}^{s}$ (right) based on the shock sensor with $\kappa=1$ on the $1440\times 480$ mesh.}
		\label{fig:2d_dmr_ss}
	\end{figure}
\end{example}

\begin{example}[A Mach 3 wind tunnel with a forward-facing step]\label{ex:2d_ffs}\rm
	The initial condition is a Mach 3 flow $(\rho, v_1, v_2, p) = (1.4, 3, 0, 1)$.
	The computational domain is $[0,3]\times[0,1]$ and the step is of height $0.2$ located from $x=0.6$ to $x=3$.
	The inflow and outflow boundary conditions are applied at the left and right boundaries, respectively, and the reflective boundary conditions are imposed at other boundaries.
	Due to the reflections and interactions of the shocks, a triple point is formed, from which a trail of vortices moves to the right.
	The resolution of those vortices is usually used to examine the numerical methods.
	The output time is $T=4$.
	
	The BP limitings are necessary, otherwise the simulation crashes due to negative density or pressure near the corner.
	The density obtained without ($\kappa=0$) and with the shock sensor ($\kappa=1$) on different meshes are plotted in \Cref{fig:2d_ffs_density_dx}, and the blending coefficients based on the shock sensor are presented in \Cref{fig:2d_ffs_ss}.
	The results clearly show that our method can capture the main features and well-developed Kelvin–Helmholtz roll-ups that originate from the triple point.
	The noise after the shock waves is reduced by the shock sensor-based limiting, while the roll-ups are preserved well.
	The spurious entropy production near the corner triggers a boundary layer above the step, which is also observed in other methods in the literature.
	The entropy fix near the corner of the step introduced in \cite{Woodward_1984_numerical_JoCP} cannot be directly applied to the AF method, because the treatment of the point value on the wall is sensitive.
	In this test, the shock sensor-based limiting is not activated in the region around the step in the $x$-direction, which helps to reduce the boundary layer.
	To be specific, in the two layers of cells above the step and five layers of cells to the left of the step, $\theta_{\xr,j}^{s}$ is set as $1$.
	Compared to the results obtained by the third-order $P^2$ DG method with the TVB limiter \cite{Cockburn_2001_Runge_JoSC}, the vorticities are better captured with the same mesh size $\Delta x=\Delta y=1/160, 1/320$.
	Note that the AF method allows a larger CFL number ($0.4$ versus $0.2$ for linear stability) and uses fewer DoFs, showing the efficiency and potential to capture small-scale features in high Mach number flows.
	
	\begin{figure}[htbp!]
		\centering
		\begin{subfigure}[b]{0.6\textwidth}
			\centering
			\includegraphics[width=\linewidth]{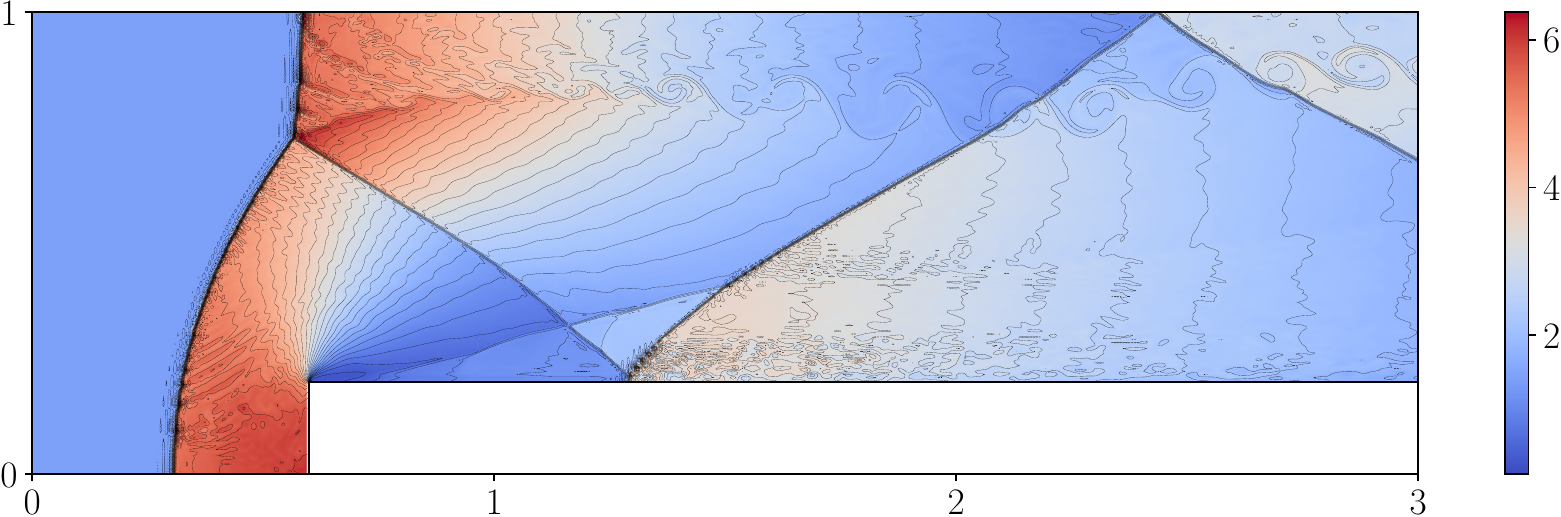}
		\end{subfigure}
		
		\begin{subfigure}[b]{0.6\textwidth}
			\centering
			\includegraphics[width=\linewidth]{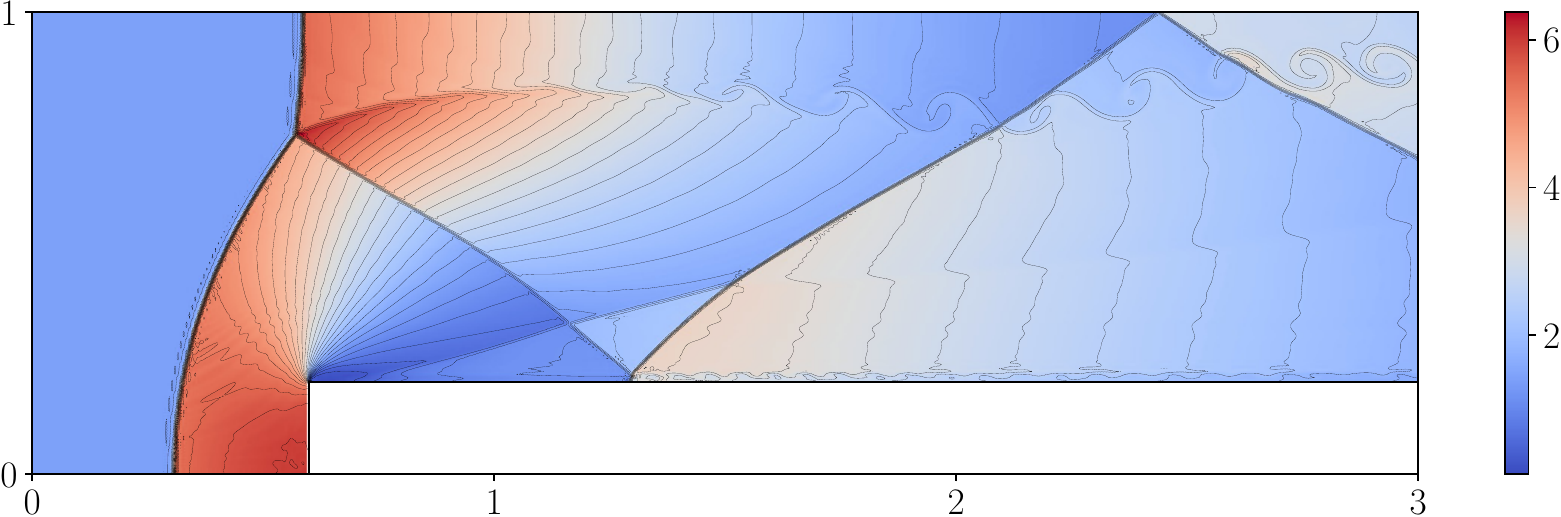}
		\end{subfigure}

		\begin{subfigure}[b]{0.6\textwidth}
			\centering
			\includegraphics[width=\linewidth]{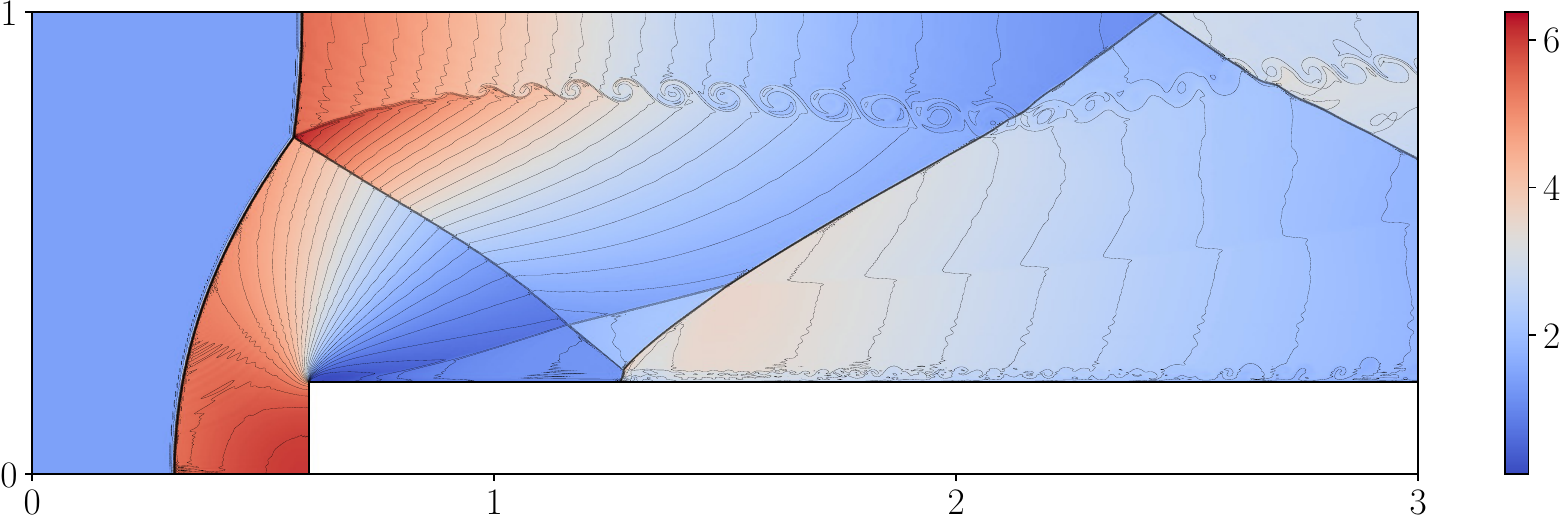}
		\end{subfigure}
		\caption{\Cref{ex:2d_ffs}, forward-facing step problem.
			The density obtained with the BP limitings and without or with the shock sensor.
			From top to bottom: $480\times160$ mesh with $\kappa=0$, $480\times160$ mesh with $\kappa=1$, 	$960\times320$ mesh with $\kappa=1$.
			$30$ equally spaced contour lines from $0.115$ to $6.378$.}
		\label{fig:2d_ffs_density_dx}
	\end{figure}
	
	\begin{figure}[htbp!]
		\centering		
		\begin{subfigure}[b]{0.48\textwidth}
			\centering
			\includegraphics[width=\linewidth]{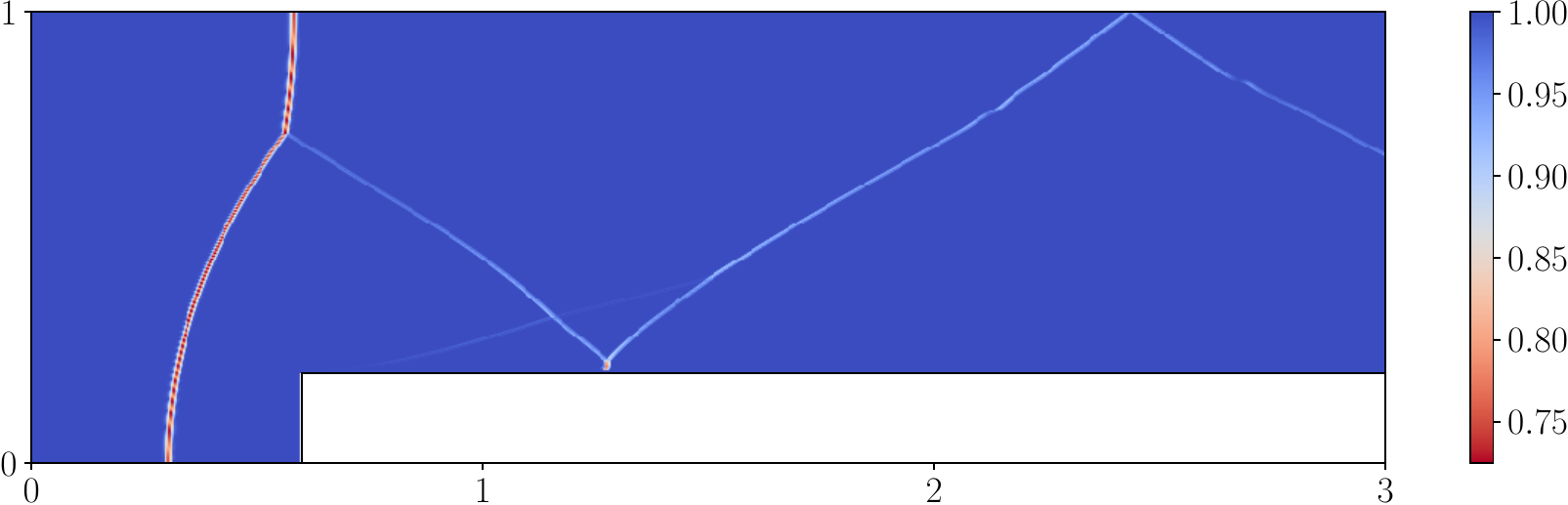}
		\end{subfigure}
		\quad
		\begin{subfigure}[b]{0.48\textwidth}
			\centering
			\includegraphics[width=\linewidth]{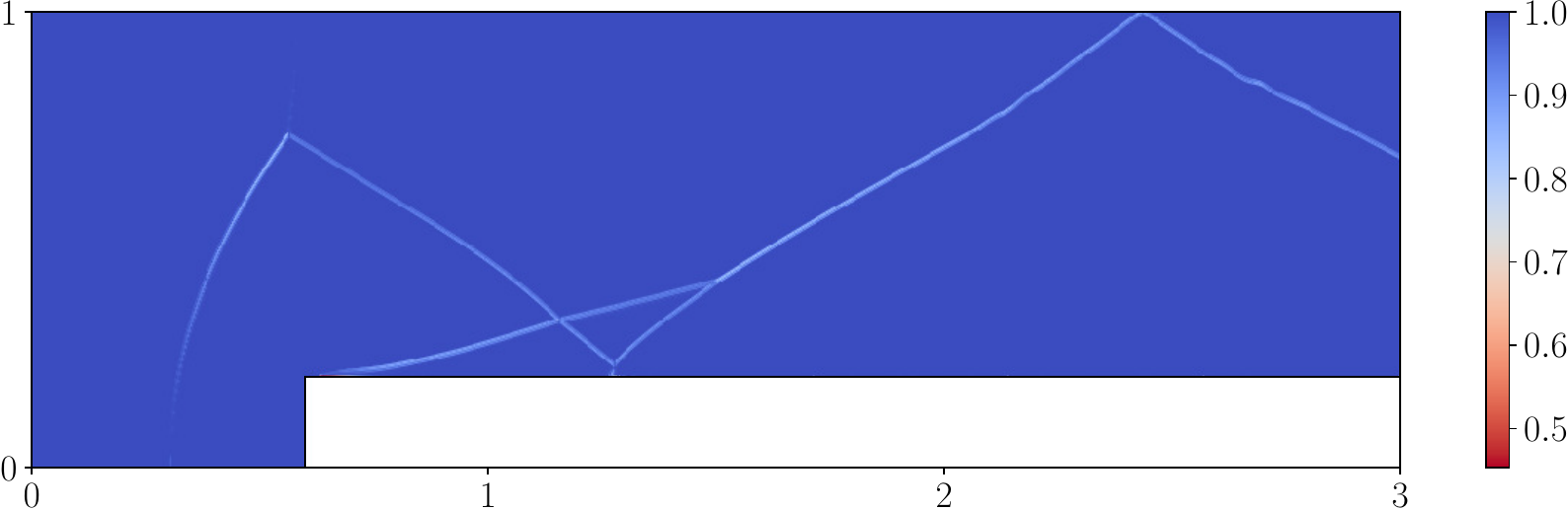}
		\end{subfigure}
		\caption{\Cref{ex:2d_ffs}, forward-facing step problem.
			The blending coefficients $\theta_{\xr,j}^{s}$ (left) and $\theta_{i,\yr}^{s}$ (right) based on the shock sensor with $\kappa=1$ on the $960\times 320$ mesh.}
		\label{fig:2d_ffs_ss}
	\end{figure}
\end{example}

\begin{example}[High Mach number astrophysical jets]\label{ex:2d_jet}\rm
	This test simulates two astrophysical jets, following the setup in \cite{Zhang_2010_positivity_JoCP}.
        The BP limitings are necessary for this kind of problem involving vacuum or near vacuum, and extremely large velocity.
        The first case considers a Mach $80$ jet on a computational domain $[0,2]\times[-0.5,0.5]$, initially filled with ambient gas with $(\rho, v_1, v_2, p) = (0.5, 0, 0, 0.4127)$.
	The outflow boundary conditions are applied at the right, top, and bottom boundaries, while the inflow boundary condition is imposed at the left boundary, $(\rho, v_1, v_2, p) = (5, 30, 0, 0.4127)$ if $\abs{y}<0.05$ and $(\rho, v_1, v_2, p) = (0.5, 0, 0, 0.4127)$ otherwise.
        The second case considers a Mach $2000$ jet on a computational domain $[0,1]\times[-0.25,0.25]$.
        The initial condition and boundary conditions are the same as the first case except that the inflow condition is $(\rho, v_1, v_2, p) = (5, 800, 0, 0.4127)$ if $\abs{y}<0.05$ and $(\rho, v_1, v_2, p) = (0.5, 0, 0, 0.4127)$ otherwise.
        The adiabatic index is $\gamma=5/3$ in this test, and 
        the output time is $0.07$ and $0.001$ for the two cases, respectively.
	
	The logarithm of the density and pressure obtained by the AF methods with the shock sensor ($\kappa=1$ for Mach $80$ and $\kappa=10$ for Mach $2000$) on the uniform $400\times200$ mesh are shown in \Cref{fig:2d_jet}.
	The main flow structures and small-scale features are captured well, comparable to those in \cite{Zhang_2010_positivity_JoCP}.
	
	\begin{figure}[htbp!]
		\centering
		\begin{subfigure}[b]{0.48\textwidth}
			\centering
			\includegraphics[width=\linewidth]{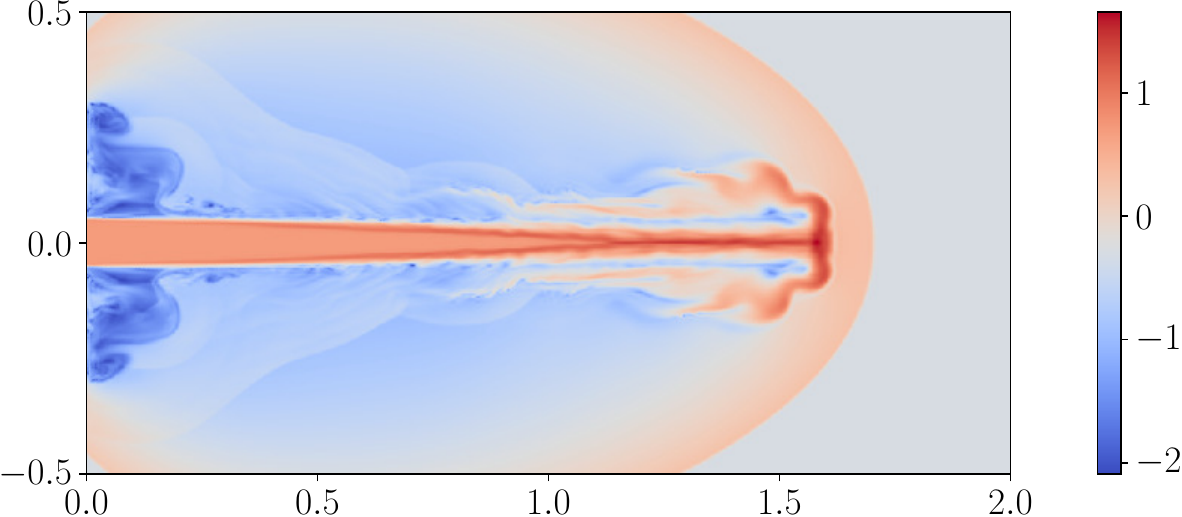}
		\end{subfigure}
		\begin{subfigure}[b]{0.48\textwidth}
			\centering
			\includegraphics[width=\linewidth]{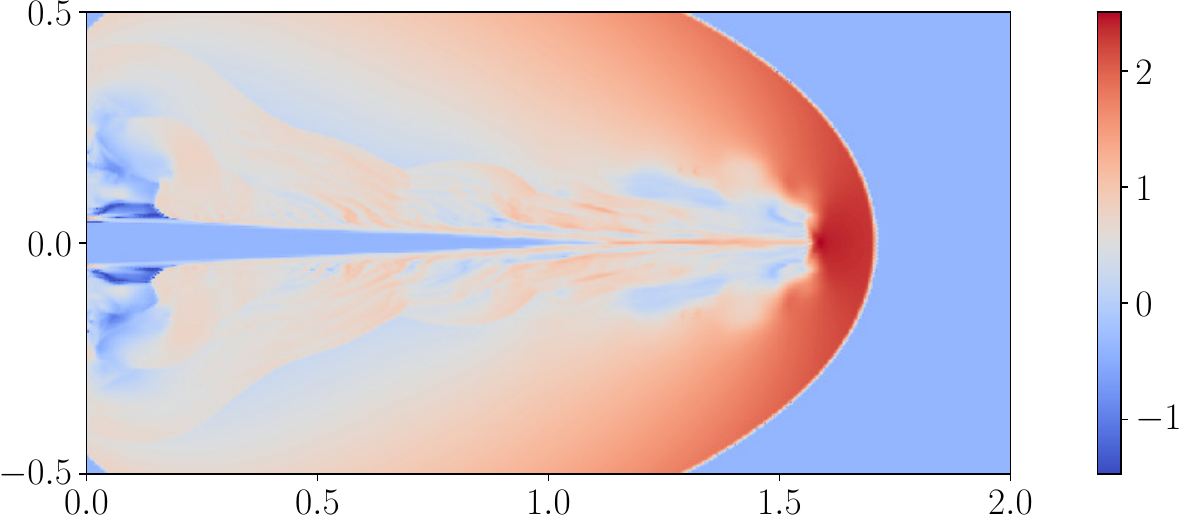}
		\end{subfigure}
		\vspace{5pt}
		
		\begin{subfigure}[b]{0.48\textwidth}
			\centering
			\includegraphics[width=\linewidth]{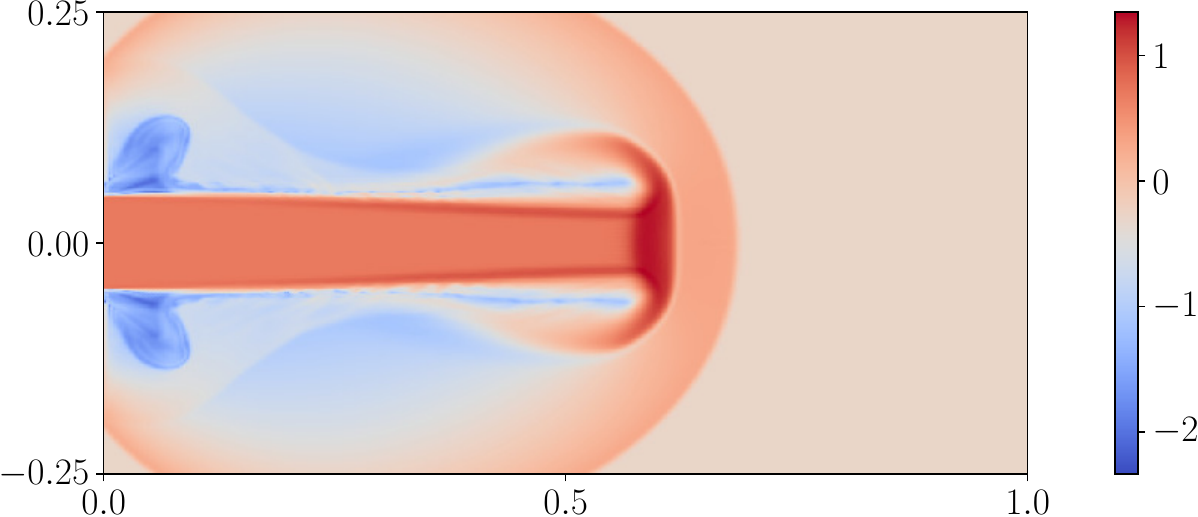}
		\end{subfigure}
		\begin{subfigure}[b]{0.48\textwidth}
			\centering
			\includegraphics[width=\linewidth]{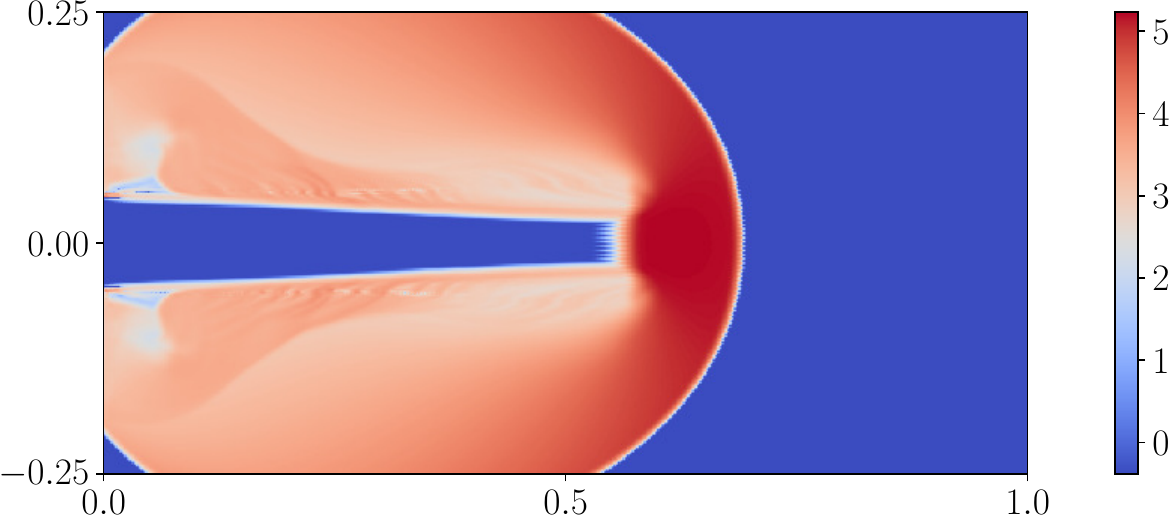}
		\end{subfigure}
		\caption{\Cref{ex:2d_jet}, the Mach $80$ jet (top row) and Mach $2000$ jet (bottom row).
		The logarithm of density (left) and pressure (right) obtained with the BP limitings and shock sensor-based limiting ($\kappa=1$ and $10$ for the Mach 80 and 2000 cases, respectively) on the uniform $400\times200$ mesh.}
		\label{fig:2d_jet}
	\end{figure}
\end{example}

\section{Conclusion}\label{sec:conclusion}
In the active flux (AF) methods, it is pivotal to design suitable point values update at cell interfaces, to achieve stability and high-order accuracy.
The Jacobian splitting (JS) based scheme for the point value update may suffer from the transonic issue and mesh alignment issue.
The flux vector splitting (FVS) for the point value update proposed in our previous work \cite{Duan_2024_Active_SJoSC} was extended to the 2D case in this work, which maintains the continuous reconstruction as the original AF methods, and offers a natural and uniform remedy to both the transonic issue and mesh alignment issue, thus a good alternative to the JS.
To construct suitable AF methods for problems involving strong discontinuities, this paper has also extended the bound-preserving (BP) AF methods \cite{Duan_2024_Active_SJoSC} to the 2D case.
The convex limiting and scaling limiter were employed for the cell average and point value, respectively, by blending the high-order AF methods with the first-order local Lax-Friedrichs (LLF) or Rusanov methods.
For scalar conservation laws, the blending coefficients were determined based on the global or local maximum principle,
while for the compressible Euler equations, they were obtained by enforcing the positivity of density and pressure.
The shock sensor-based limiting was proposed to further improve the shock-capturing ability.
The numerical results verified the accuracy, BP property, and shock-capturing ability of our BP AF methods.
Moreover, for the double Mach reflection and forward-facing step problems, the present AF method was able to capture comparable or better small-scale features compared to the third-order discontinuous Galerkin method with the TVB limiter on the same mesh resolution \cite{Cockburn_2001_Runge_JoSC}, while using fewer degrees of freedom, demonstrating the efficiency and potential of our FVS-based BP AF method for high Mach number flows.


\section*{Acknowledgement}

JD was supported by an Alexander von Humboldt Foundation Research fellowship CHN-1234352-HFST-P. CK and WB acknowledge funding by the Deutsche Forschungsgemeinschaft (DFG, German Research Foundation) within \textit{SPP 2410 Hyperbolic Balance Laws in Fluid Mechanics: Complexity, Scales, Randomness (CoScaRa)}, project number 525941602.
We acknowledge helpful discussions with Praveen Chandrashekar at TIFR-CAM Bangalore on the Ducros' shock sensor.

\newcommand{\etalchar}[1]{$^{#1}$}


%




\bibliographystyle{siamplain}
\bibliography{/Users/Junming/Research/references.bib}

\end{document}